\theoremstyle{plain}
\newtheorem{theorem}{Theorem}[section]
\newtheorem{lemma}[theorem]{Lemma}
\newtheorem{Definition}[theorem]{Definition}
\newtheorem{Problem}[theorem]{Problem}
\theoremstyle{remark}
\newtheorem{remark}[theorem]{Remark}
\newcounter{example}[section]
\newenvironment{example}[1][]{\refstepcounter{example}\par\medskip
	\textbf{Example~\thesection.\theexample. #1} \rmfamily}{\medskip}
\begin{document}
\allowdisplaybreaks[4]
\numberwithin{figure}{section}
\numberwithin{table}{section}
 \numberwithin{equation}{section}
%
\title[$\bm{L}^{\infty}$- error estimators of quadratic FEM for the Signorini problem]
 {Supremum norm A Posteriori Error control of Quadratic Finite Element Method for the Signorini problem}
  \author{Rohit Khandelwal}
 \address{Department of Mathematics, Indian Institute of Technology Delhi - 110016}
 \email{rohitkhandelwal004@gmail.com }
 \author{Kamana Porwal}\thanks{The second author's work is supported  by CSIR Extramural Research Grant}
 \address{Department of Mathematics, Indian Institute of Technology Delhi - 110016}
 \email{kamana@maths.iitd.ac.in}
 \author{Tanvi Wadhawan}\thanks{}
 \address{Department of Mathematics, Indian Institute of Technology Delhi - 110016}
 \email{tanviwadhawan1234@gmail.com}
\date{}

\begin{abstract}
In this paper, we develop a new residual-based pointwise a posteriori error estimator of the quadratic finite element method for the Signorini problem. The supremum norm a posteriori error estimates enable us to locate the singularities locally to control the pointwise errors. In the analysis the discrete counterpart of contact force density is constructed suitably to exhibit the desired sign property. We employ a priori estimates for the standard Green's matrix for the divergence type operator and introduce the upper and lower barriers functions by appropriately modifying the discrete solution. Finally, we present numerical experiments that  illustrate the excellent performance of the proposed error estimator.
\end{abstract}
\keywords{A posteriori error analysis, contact problems, supremum norm, variational inequalities, density forces, barrier functions, Green's matrix}
\subjclass{65N30, 65N15}
\maketitle
\allowdisplaybreaks
\def\R{\mathbb{R}}
\def\cA{\mathcal{A}}
\def\cK{\mathcal{K}}
\def\cN{\mathcal{N}}
\def\p{\partial}
\def\O{\Omega}
\def\bbP{\mathbb{P}}
\def\cV{\mathcal{V}}
\def\cV{\mathcal{N}}
\def\cM{\mathcal{M}}
\def\cT{\mathcal{T}}
\def\cE{\mathcal{E}}
\def \cW{\mathcal{W}}
\def \cJ{\mathcal{J}}
\def \cV{\mathcal{V}}
\def\bF{\mathbb{F}}
\def \cW{\mathcal{F}}
\def\bC{\mathbb{C}}
\def\bN{\mathbb{N}}
\def\ssT{{\scriptscriptstyle T}}
\def\HT{{H^2(\O,\cT_h)}}
\def\mean#1{\left\{\hskip -5pt\left\{#1\right\}\hskip -5pt\right\}}
\def\jump#1{\left[\hskip -3.5pt\left[#1\right]\hskip -3.5pt\right]}
\def\smean#1{\{\hskip -3pt\{#1\}\hskip -3pt\}}
\def\sjump#1{[\hskip -1.5pt[#1]\hskip -1.5pt]}
\def\jumptwo{\jump{\frac{\p^2 u_h}{\p n^2}}}

\noindent


\section{Introduction} \label{sec1}
\noindent
Let $\Omega \subset \mathbb{R}^2$ denotes an elastic body with Lipschitz boundary $\partial{\Omega}$ which is partitioned into three non overlapping mutually disjoint sets $\partial \Omega = {\bar\Gamma_D} \cup {\bar\Gamma_N} \cup {\bar\Gamma_C}$, where $\Gamma_D$ is the Dirichlet boundary with $meas(\Gamma_D) > 0$, $\Gamma_C$ and $\Gamma_N$ are the contact and Neumann boundaries, respectively. {Here, $\Gamma_D,\Gamma_N$ and $\Gamma_C$ are open subsets of $\partial \Omega$.} In this
article, we consider the model Signorini (unilateral contact) problem whose strong form is to find the displacement vector $\bm{u}: \Omega  \longrightarrow \mathbb{R}^2$ such that 
\begin{align} 
-\mbox{div}\bm{\sigma}\bm{(u)}  &= \bm{f} \hspace{1.4cm}\mbox{in}~ \Omega, \label{CP}\\ \bm{\hat{\sigma}(u)} &= \bm{g} \hspace{1.34cm}\mbox{on}~ \Gamma_N, \label{eq:NC}\\  \bm{u} &= \bm{0} \hspace{1.41cm} \mbox{on}~ \Gamma_D, \label{eq:DP}\\ u_n \leq \chi ,\hspace{0.3cm} \hat{\sigma}_n(\bm{u}) \leq 0,\hspace{0.2cm}  (u_{n}-\chi) \hat{\sigma}_n(\bm{u}) &=0 \hspace{1.49cm} \mbox{on}~ \Gamma_C, \\ \bm{\hat{\sigma}}_{\tau}(\bm{u}) &=0 \hspace{1.49cm}\mbox{on}~ \Gamma_C, \label{eq:FC}
\end{align} where $\chi: \Gamma_C \rightarrow \mathbb{R}$ denotes the gap function representing the distance between $\Omega$ and rigid obstacle. Further $\bm{g} \in [L^{\infty}(\Gamma_N)]^2$ denotes the surface force and $\bm{f} \in [L^{\infty}(\Omega)]^2$ be the volume force density.  For a matrix valued function $\bm{A}= (a_{ij}) \in \mathbb{R}^{2\times 2}$, its divergence is defined as
\begin{align*}
(\text{div}(A))_i = \sum_{j} \frac{\partial}{\partial x_{j}} (a_{ij}) \quad 1\leq i \leq 2.
\end{align*}
{In this article}, vector valued functions are denoted by bold symbols and the scalar valued functions are written in the usual way. Let
\begin{align} 
\bm{\epsilon(u)}:= \frac{1}{2} (\nabla \bm{u}^T + \nabla \bm{u}) ~~\text{and}~~
\bm{\sigma(u)}:= 2\mu \bm{\epsilon(u)} + \zeta~(tr\bm{\epsilon(u)}) \bm{I}\label{strain}
\end{align}
be the linearized strain tensor and stress tensor, respectively, where $\bm{I}$ is an identity matrix of order $2$ and $\mu >0$, $\zeta >0$ are the Lam$\acute{e}$ constants which are expressed using Young's modulus $E$ and Poisson ratio $\nu$ via \cite{kikuchi1988contact,walloth2012}
\begin{align} \label{Lame}
\zeta = \frac{E\nu}{(1+\nu)(1-2\nu)},~~\mu=\frac{E}{2(1+\nu)}.
\end{align}
In order to avoid working with the space $H^{1/2}_{00}(\Gamma_C)$, we assume ${\bar\Gamma_C} \cap {\bar\Gamma_D} = \emptyset$ (see \cite{kikuchi1988contact}).
Further, we denote $\bm{u}$ as $\bm{u}= u_i \bm{e}_i$ where \{$\bm{e}_i; i=1,2$\} are the standard basis vectors of $\mathbb{R}^2$.   Let $\bm{n}$ denotes the outward unit normal vector to $\partial{\Omega}$.  The (linearized) non-penetration condition $u_n \leq \chi$ where $u_n:= \bm{u} \cdot \bm{n}$,  arises due to the contact of two solid bodies. It incites the contact stresses $\bm{\hat{\sigma}(u)} := \bm{\sigma(u)} \bm{n}$ in the direction of the normal at $\Gamma_C$. The complimentarity condition on $\Gamma_C$ is given by 
\begin{align*}
(u_{n}-\chi) \hat{\sigma}_n(\mathbf{u}) &=0,
\end{align*}
where we use the notation $\hat{\sigma}_n(\mathbf{u}):= \bm{\hat{\sigma}(u)} \cdot \bm{n}$ for the contact stresses. It is clear that $\hat{\sigma}_n(\mathbf{u})=0$ provided there is no contact. We assume that there is no frictional effects on $\Gamma_C$  i.e., the tangential boundary stresses $\bm{\hat{\sigma}}_{\tau}(\mathbf{u}) :=  \bm{\hat{\sigma}(u)} - \hat{\sigma}_n(\mathbf{u}) \bm{n} $ are assumed to be zero. In {the} analysis, for any Banach space/Hilbert space $H$,  we use the notation $\bm{H}=H \times H$ to describe the space of vector valued functions.

\begin{remark}
The unilateral contact problem has several relevant applications in the physical and mechanical sciences. For example, we consider the model problem from deformable solid mechanics. The body $\Omega$ is subjected to the external forces $\bm{f}$ and $\bm{g}|_{\Gamma_N}$ and it is further supported by the frictionless rigid membrane $\Gamma_C$ (see \Cref{FIg1}). The displacement of the domain $\Omega$ satisfies \eqref{CP}--\eqref{eq:FC} \cite{hild2002quadratic}.
Another example comes from the hydrostatics,  consider a fluid which is contained in a semi-permeable domain $\Omega$ which permits the fluid to travel through only in one direction and assume the body $\Omega$ is partially bounded by a membrane $\Gamma_C$. If the external pressure  $\chi|_{\Gamma_C}$ is applied, then the resulting internal pressure satisfy equations \eqref{CP}--\eqref{eq:FC}.
\setcounter{figure}{0}
\renewcommand{\thefigure}{\arabic{figure}}
\begin{figure}[ht!] 
	\begin{center}
		\includegraphics[height=10cm,width=15cm]{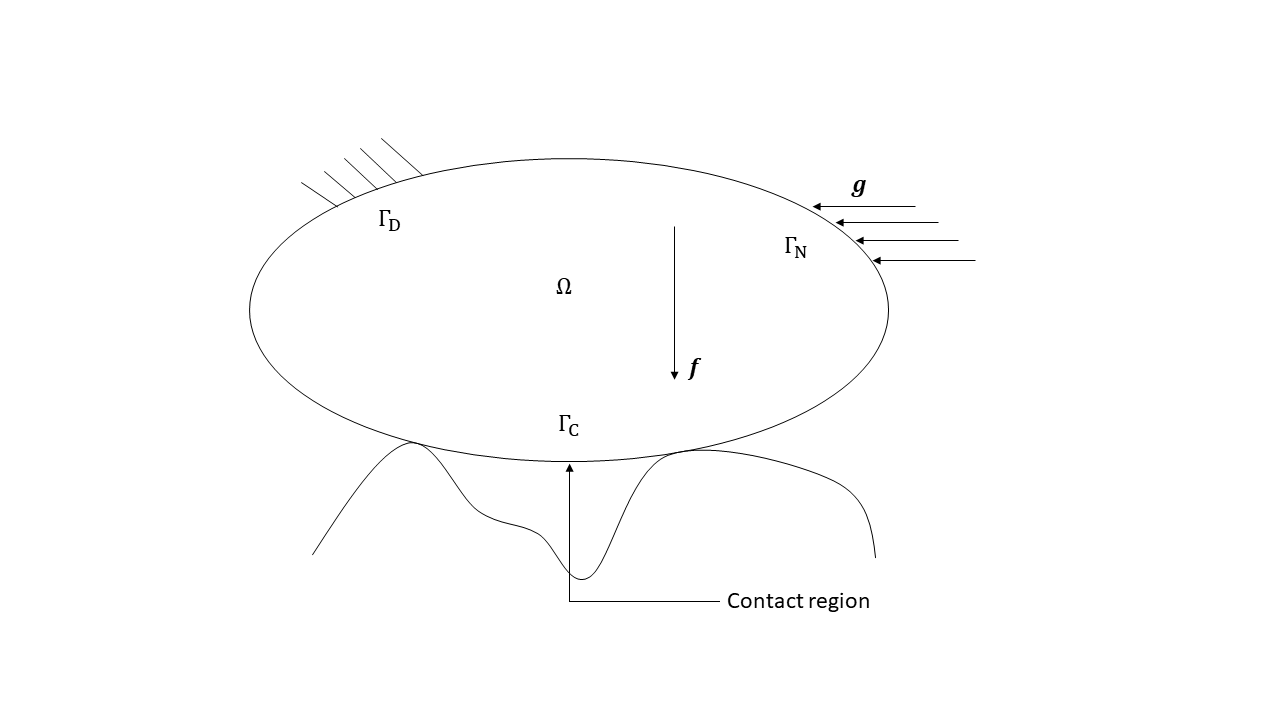}
		\caption{Model Problem}
		\label{FIg1}
	\end{center}
\end{figure}
\end{remark}
\noindent 
The Signorini problem is a prototype of the elliptic variational inequality of the first kind. It's continuous variational formulation reads: {to} find $\bm{u}=(u_1,u_2) \in \bm{\cK} \subseteq \bm{V} $ such that
\begin{align}\label{eq:CVI}
a(\bm{u},\bm{v}-\bm{u})\geq L(\bm{v-u})  \quad \forall~ \bm{v}\in \bm{\cK},
\end{align} 
where
\begin{enumerate}
	\item[a)] $\bm{\cK}=\{\bm{v}\in \bm{V}~|~v_n \leq \chi~~ \mbox{on}~~ \Gamma_C \}$ with $\bm{V} =\bm{H}^1_{\Gamma_D}(\Omega)= \{\bm{v} \in \bm{H}^1(\Omega)~|~ \bm{v}= \bm{0} ~~\text{on}~ \Gamma_D\}$,
	\item[c)] $a(\bm{z},\bm{v}) = \int_{\Omega} \bm{\sigma}(\bm{z}): \bm{\epsilon}(\bm{v})dx, \quad \forall \bm{z},\bm{v} \in \bm{V}$,
	\item[d)] $ L(\bm{v}) =(\bm{f},\bm{v}) + \langle \bm{g},\bm{v} \rangle_{\Gamma_N} \quad \forall \bm{v} \in \bm{V}$,
	\end{enumerate}
	here $( \cdot,\cdot)$ denotes $[L^2(\Omega)]^2$ inner-product.
	In the subsequent analysis,  $\langle \cdot, \cdot \rangle_{-1,1}$ denotes the duality pairing between the space $\bm{V}$ and its dual $\bm{V}^*$.  The corresponding norms on the space  $\bm{V}$ and $\bm{V}^*$ are given by $\|\cdot\|_{1}= \|\cdot\|_{\bm{H^1(\Omega)}}$ and $\|\cdot\|_{-1}= \|\cdot\|_{\bm{H^{-1}(\Omega)}}$, respectively.
\noindent
	By the classical work of Stampacchia \cite{glowinski2008lectures}, we have the existence and uniqueness of the solution for variational inequality \eqref{eq:CVI}.
	For the ease of presentation, we assume $\bm{n}=(1,0)$ to be an outward unit normal to $\Gamma_C$, hence we rewrite the discrete set $\bm{\cK}$ as
	\begin{align}
	\bm{\cK}=\{\bm{v}=(v_1,v_2)\in \bm{V}~|~v_1 \leq \chi~~ \mbox{on}~~ \Gamma_C \}.
	\end{align}

	\begin{remark} The solution $\bm{u} \in \bm{H}^1_{\Gamma_D}(\Omega) \cap \bm{C}^{0,\alpha}(\bar{\Omega})$ for some $\alpha>0$ under the assumption that $\bm{f} \in \bm{L}^{\infty}(\Omega)$ and the gap function $\chi$ is H{\"o}lder continuous \cite{Signorini:1977,Caffarelli:1979}.
	\end{remark}

\noindent A priori error estimates for \eqref{eq:CVI} are discussed in \cite{brezzi1977error,scarpini1977error,kikuchi1988contact,glowinski2008lectures} using conforming linear {finite element method (FEM)}. Higher-order finite element methods \cite{ciarlet2002finite} might contribute in deriving more accurate discrete solutions. We refer to the articles \cite{hild2002quadratic,belhachmi2003quadratic} for a priori error estimates of quadratic finite element methods for \eqref{eq:CVI}. The article \cite{hild2002quadratic} exploited the mixed formulation in which the unknowns are the displacement $\bm{u}$ and the contact pressure, and the article \cite{belhachmi2003quadratic} described two nonconforming quadratic approximations corresponding to the Signorini problem. There has been enormous activity in recent years for {developing} a posteriori error estimates of finite element methods for {the} variational inequality \eqref{eq:CVI}. We can obtain robust a posteriori error estimates for the elliptic variational inequalities by associating the true error in constraining density forces in the error measure. This crucial observation was first made by Veeser \cite{veeser2001efficient} for the obstacle problem while deriving a posteriori error bounds in the energy norm using the conforming finite element method. We refer the articles \cite{weiss2009posteriori,krause2015efficient,gudi2016posteriori,walloth2019reliable} for a posteriori error analysis for the Signorini problem using linear FEM. {In the article \cite{KP:2022:QuadSignorini}, authors have developed a residual-based energy norm a posteriori error estimator using the quadratic conforming finite element method for the frictionless unilateral contact problem. The analysis developed in all the articles mentioned previously is on the energy or Sobolev space norms.} In this article, we propose and derive a posteriori error analysis for \eqref{eq:CVI} in the supremum ($\bm{L}^{\infty}$) norm using quadratic conforming FEM. To the best of the author's knowledge, this paper is the first attempt in this direction. A posteriori error estimates in the supremum norm for the variational inequalities capture the discrete solution's pointwise accuracy and provide more localized knowledge about the approximation.

\noindent
In the past decade, {several} works \cite{nochetto1995pointwise,dari1999maximum,demlow2012pointwise} discussed pointwise a posteriori error estimates for the linear elliptic  problems. In the articles \cite{nochetto1995pointwise,dari1999maximum} and \cite{demlow2012pointwise}, the analysis is carried out using conforming and discontinuous Galerkin (DG) linear finite element methods, respectively. {Supremum norm} a priori error estimates for the elliptic variational inequalities were initially derived by Nitsche \cite{nitsche1977convergence} and Baiocchi \cite{baiocchi1977estimations}. The authors in \cite{nochetto2003pointwise,nochetto2005fully,BGP:2022:Obstacle} have considered linear finite element method and derived the reliable and efficient a posteriori error estimates for the elliptic obstacle problem in the supremum ($\bm{L}^{\infty}$) norm. {Recently, in \cite{khandelwal2022pointwiseq}, the pointwise {\it{a posteriori}} error estimates are developed for the obstacle problem using quadratic conforming FEM. The analysis in the article \cite{khandelwal2022pointwiseq} uses constraints only at the midpoints of the edges of triangulation and the Lagrange multiplier constructed suitably to achieve the optimal order of convergence. } In \cite{KP:2021:Signorini}, the pointwise adapative FEM is studied for the Signorini problem using linear conforming elements. The proof in \cite{KP:2021:Signorini} is based on the direct use of the bounds and a priori estimates of the Green's matrix for the divergence type operator \cite{dolzmann1995estimates}. If no contact occurs, the proposed error estimator in the article \cite{KP:2021:Signorini} reduces
to the standard error estimator for the linear elasticity \cite{walloth2012}.

\noindent
 In this work, the piece-wise quadratic discrete space $\bm{V}_h$ is decomposed {carefully} in order to obtain the desired properties for the quasi discrete contact force density \cite{walloth2012,moon2007posteriori}. The sign property (Lemma \ref{sign2}) of quasi discrete contact force density helps crucially in deriving the reliability estimates {for the proposed a posteriori error estimator} in the supremum norm. Moreover, we introduce the dual problem with the aid of {a} corrector function \cite{nochetto2003pointwise,nochetto2005fully},  upper and lower barrier functions corresponding to the continuous solution $\bm{u}.$
  The proof of the reliability estimates  hinges mainly on the appropriate construction of the discrete contact force density,  a priori error estimates for the Green's matrix of divergence type operator \cite{dolzmann1995estimates} and the pointwise estimate on the corrector function. Our analysis is slightly different from the articles \cite{nochetto2003pointwise,nochetto2005fully,nochetto2006pointwise} which relies on the regularized Green's function in order to derive the pointwise a posteriori error estimates for the obstacle problem. To derive the local efficiency estimates, we followed the approach shown in the articles \cite{moon2007posteriori,krause2015efficient} and defined the quasi discrete contact force density differently on the distinct parts of $\Gamma_C$.\\

\noindent 
The rest of the paper is discussed as follows: In Section \ref{sec2}, we define the continuous contact force density and discuss related results. Some standard regularity results and a priori estimates for the Green's matrix have been introduced in {Section \ref{sec2}. We state the discrete formulation of the continuous variational inequality and define some associated notations  in Section \ref{sec3}.} The discrete analogous of the contact force density is discussed in Section \ref{sec4}. In Section \ref{sec5}, we define a continuous linear functional on $\bm{V}$, called the quasi discrete contact force density, unlike the article \cite{gudi2016posteriori}, where the authors defined the discrete Lagrange multiplier which is indeed a linear functional on $\bm{H}^{\frac{1}{2}}(\Gamma_C)$. The {main contributions of the paper}, i.e., reliability and efficiency of the proposed a  posteriori error estimator, are discussed in Section \ref{sec6}. In Section \ref{sec7}, we present several numerical experiments  demonstrating the performance of a posteriori error estimator.

\section{Continuous contact force denisty and Green's  matrix} \label{sec2}
\noindent In this section, we introduce the continuous contact force density which is the residual of the displacement $\bm{u}$ with respect to the continuous variational inequality \eqref{eq:CVI} and Green's matrix associated to a  divergence type operator. We first {recall} the subdifferential of a proper functional.
 \begin{Definition}
	Let $X$ be a Hilbert space and $m: X \rightarrow \mathbb{R} \cup \{-\infty, \infty\}$ be a proper map, i.e., $m(x) > - \infty \forall~ x \in X$ and $m \neq + \infty$. Let $x \in X$, then the subdifferential of $m$ in $x$ is defined by
	\begin{align*}
	\nabla_{sub} m(x):= \{x^* \in X^*~|~m(y)-m(x) \geq {x}^*(y-x) \quad \forall~ y \in X\},
	\end{align*}
	where $X^*$ denotes the dual of $X$.
\end{Definition}
\noindent
Now, we define the continuous contact force density $\bm{\lambda}=(\lambda_1,\lambda_2) \in  \bm{V}^*$ in the following way
\begin{align}\label{eq:sigmadef}
\langle \bm{\lambda}, \bm{v}\rangle_{-1,1} =L(\bm{v}) -a(\bm{u},\bm{v})\quad \forall~ \bm{v}\in \bm{V}.
\end{align}
\begin{remark}
$\bm{\lambda}$ can be treated as an element of $\nabla_{sub} I_{\bm{\cK}}(\bm{u})$ \cite{walloth2012}, where $I_{\bm{\cK}}$ is the indicator function \cite{glowinski1980numerical} defined by
	\[ 
 I_{\bm{\cK}}(\bm{v})=
\begin{cases}
0 & \text{if}~ \bm{v} \in \bm{\cK}, \\
+\infty &  \text{otherwise},
\end{cases} 
\] and we note that $\lambda_1 \in \nabla_{sub} I_{\bm{\cK}}({u}_1)$ where
	\[ 
\nabla_{sub} I_{\bm{\cK}}(u_1)=
\begin{cases}
0 & \text{if}~ u_1 < \chi, \\
[0,\infty) & \text{if}~ u_1 = \chi.
\end{cases} 
\]
\end{remark}
\begin{remark}
	Let $supp(v)$ denotes the support of the function $v$, then we deduce 
	\begin{align} \label{eq:IUY}
	supp(\lambda_1) &\subset \{u_1=\chi\},
	\end{align}
	from {the} definitions of subdifferential of an indicator function and continuous contact force density $\bm{\lambda}$ \cite{walloth2012}.
\end{remark} 
\noindent The following results \cite[Lemma 2.7]{KP:2021:Signorini}  can be realized by a use of \eqref{eq:sigmadef}, \eqref{eq:CVI} and a suitable choice of the test function in \eqref{eq:sigmadef}.
\begin{lemma} \label{Lemmaa}
	It holds that 
	\begin{align}
	\langle \bm{\lambda}, \bm{u}-\bm{v}\rangle_{-1,1} &\geq 0\quad \forall~ \bm{v} \in \bm{\cK}, \label{eq:SCT} \\ 
	\langle \bm{\lambda} , \bm{\phi} \rangle_{-1,1} &\geq 0 \quad \forall~ \bm{0} \leq \bm{\phi} \in \bm{V} . \label{eq:POOI}
	\end{align}
\end{lemma}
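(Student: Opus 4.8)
The plan is to obtain both inequalities directly from the definition \eqref{eq:sigmadef} of the contact force density $\bm{\lambda} \in \bm{V}^*$ and the variational inequality \eqref{eq:CVI}; no regularity, approximation, or duality machinery enters here.

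For \eqref{eq:SCT}, fix $\bm{v} \in \bm{\cK} \subseteq \bm{V}$, so that $\bm{u} - \bm{v} \in \bm{V}$ is a legitimate argument for $\langle \bm{\lambda}, \cdot \rangle_{-1,1}$. Using linearity of $\langle \bm{\lambda}, \cdot \rangle_{-1,1}$, $a(\bm{u}, \cdot)$ and $L(\cdot)$ in their (second) argument together with \eqref{eq:sigmadef}, one rewrites
\begin{align*}
\langle \bm{\lambda}, \bm{u} - \bm{v} \rangle_{-1,1} = L(\bm{u} - \bm{v}) - a(\bm{u}, \bm{u} - \bm{v}) = a(\bm{u}, \bm{v} - \bm{u}) - L(\bm{v} - \bm{u}) \geq 0,
\end{align*}
where the last inequality is precisely \eqref{eq:CVI} tested against $\bm{v}$. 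This proves \eqref{eq:SCT}.

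For \eqref{eq:POOI}, given $\bm{0} \leq \bm{\phi} \in \bm{V}$, the idea is to feed the competitor $\bm{v} := \bm{u} - \bm{\phi}$ into \eqref{eq:SCT}. To check admissibility, note that on $\Gamma_C$ we have $v_1 = u_1 - \phi_1 \leq u_1 \leq \chi$, using $\phi_1 \geq 0$ and the feasibility $\bm{u} \in \bm{\cK}$; hence $\bm{v} \in \bm{\cK}$. Applying \eqref{eq:SCT} with this $\bm{v}$ and observing $\bm{u} - \bm{v} = \bm{\phi}$ yields $\langle \bm{\lambda}, \bm{\phi} \rangle_{-1,1} \geq 0$.

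The whole argument is routine; the only step that is not purely formal is the verification that $\bm{u} - \bm{\phi} \in \bm{\cK}$, which hinges on the one-sided, single-component nature of the constraint set and on $\bm{u}$ being itself feasible. I do not expect any real obstacle.
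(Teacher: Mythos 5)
Your proof is correct and is exactly the argument the paper has in mind: it gives no proof of its own but points to \cite{KP:2021:Signorini} with the recipe ``use \eqref{eq:sigmadef}, \eqref{eq:CVI} and a suitable choice of test function,'' which is precisely your rewriting of $\langle\bm{\lambda},\bm{u}-\bm{v}\rangle_{-1,1}$ via the definition of $\bm{\lambda}$ plus the admissible competitor $\bm{v}=\bm{u}-\bm{\phi}$. The admissibility check $u_1-\phi_1\le u_1\le\chi$ on $\Gamma_C$ is the only nontrivial point and you handle it correctly.
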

\noindent
We collect a key representation for $\bm{\lambda}$ in the next lemma and refer the article \cite{KP:2021:Signorini} for the details.
\begin{lemma}
	It holds that
\begin{align} \label{repre}
\langle \bm{\lambda}, \bm{v} \rangle_{-1,1} =  \langle {\lambda}_1, {v_1} \rangle_{-1,1},
\end{align} 
where $\bm{v}=(v_1,v_2) \in \bm{V}.$
\end{lemma}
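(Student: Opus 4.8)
The plan is to show that the second component $\lambda_2$ of $\bm{\lambda}$ vanishes identically, after which the claimed identity follows at once by linearity of the duality pairing. The crucial structural observation is that the convex set $\bm{\cK}$ constrains only the first (normal) component through $v_1\le\chi$ on $\Gamma_C$ and imposes no restriction on the second component beyond membership in $\bm{V}$; hence tangential perturbations of $\bm u$ are admissible in \emph{both} directions.

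First I would fix an arbitrary scalar function $w\in H^1_{\Gamma_D}(\Omega)$ and set $\bm{\psi}=(0,w)\in\bm{V}$. For every $t>0$, the functions $\bm u\pm t\bm{\psi}$ have first component equal to $u_1$, so $(\bm u\pm t\bm{\psi})_1=u_1\le\chi$ on $\Gamma_C$, and therefore $\bm u\pm t\bm{\psi}\in\bm{\cK}$. Substituting $\bm v=\bm u+t\bm{\psi}$ and then $\bm v=\bm u-t\bm{\psi}$ into the inequality \eqref{eq:SCT} of Lemma~\ref{Lemmaa} (equivalently, into \eqref{eq:CVI} combined with the definition \eqref{eq:sigmadef}) yields $\mp\,t\,\langle\bm{\lambda},\bm{\psi}\rangle_{-1,1}\ge 0$; dividing by $t>0$ and combining the two inequalities gives $\langle\bm{\lambda},\bm{\psi}\rangle_{-1,1}=0$, that is $\langle\lambda_2,w\rangle_{-1,1}=0$. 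Since $w$ was arbitrary in $H^1_{\Gamma_D}(\Omega)$, this shows $\lambda_2=0$ in the dual space.

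Finally, for a general $\bm v=(v_1,v_2)\in\bm{V}$, I would decompose $\bm v=(v_1,0)+(0,v_2)$ and use linearity of $\langle\bm{\lambda},\cdot\rangle_{-1,1}$ together with $\lambda_2=0$ to obtain $\langle\bm{\lambda},\bm v\rangle_{-1,1}=\langle\lambda_1,v_1\rangle_{-1,1}+\langle\lambda_2,v_2\rangle_{-1,1}=\langle\lambda_1,v_1\rangle_{-1,1}$, which is precisely \eqref{repre}. There is no serious obstacle in this argument; the only point needing a moment's care is the admissibility of the two-sided perturbation $\bm u\pm t\bm{\psi}$, which is immediate once one notes that changing the tangential component leaves the normal component — the sole quantity entering the definition of $\bm{\cK}$ — unchanged.
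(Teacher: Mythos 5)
Your proof is correct: the two-sided perturbation $\bm{u}\pm t(0,w)\in\bm{\cK}$ combined with \eqref{eq:SCT} gives $\lambda_2=0$, and the identity \eqref{repre} follows by the component decomposition. The paper itself omits the argument and defers to the cited reference \cite{KP:2021:Signorini}, where essentially this same reasoning is used, so there is nothing to add.
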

\noindent
{Next, we introduce the standard Green's matrix for divergence type operators as it plays a key role in performing the supremum norm a posteriori error analysis of associated non-linear problems. For definition and regularity results of the Green's matrix for unconstrained problem, we refer the reader to the articles \cite{dolzmann1995estimates,hofmann2007green,dong2009green}.
\subsection{Green's matrix}
\noindent
We establish this subsection by restating $\bm{\epsilon(u)}$ and $\bm{\sigma(u)}$ (defined in \eqref{strain}) in a different way as follows. Define
\begin{align*}
\epsilon^i_{\alpha}(u)&:= \frac{1}{2} \Big( \frac{\partial u_i}{\partial x_{\alpha}} + \frac{\partial u_{\alpha}}{\partial x_i}\Big), \\
\sigma_{\alpha}^i(u) &:= a_{\alpha\beta}^{ij} \frac{\partial u_{\beta}}{\partial x_j},
\end{align*}
where, the Lam$\acute{e}$ operator $a_{\alpha\beta}^{ij}$ is defined by
\begin{align*}
a_{\alpha\beta}^{ij}= \mu (\delta_{ij} \delta_{\alpha \beta}+ \delta_{j \alpha}\delta_{i \beta}) +\zeta  \delta_{i \alpha} \delta_{j \beta},
\end{align*}
with $\delta_{i,j}  ~\text{denoting the Kronecker's Delta i.e.,}~\delta_{i,j}=1~\text{if}~i=j~\text{and}~0~\text{elsewhere}$, $\mu$ and $\zeta $ are defined in equation (\ref{Lame}). In the analysis, we denote $\bm{x}'$ to be the transpose of vector $\bm{x}.$ Further, we formulate the divergence type operator in the following way
\begin{align} \label{dope}
\mbox{div}\bm{\sigma}\mathbf{(u)}:= (Fu)^{\alpha}= \sum_{i,j=1}^{2} \sum_{\beta =1}^{2} \frac{\partial}{\partial x_i} a_{\alpha \beta}^{ij} \frac{\partial u_{\beta}}{\partial x_j}, \quad \alpha= 1,2.
\end{align}
The existence of the Green's matrix for operator $F$ is stated in the next lemma. We refer the articles \cite{dolzmann1995estimates,hofmann2007green} for the detailed readings. 
\begin{lemma} \label{regularity}
	Let $\delta_z$ be a Dirac delta function having a unit mass at $z \in \Omega$ and let $F$ be the operator defined in \eqref{dope} which satisfies
	\begin{equation*}
	(F \bm{u})^i = \sum_{j=1}^{2} F_{ij}u_j:= \sum_{j=1}^{2} \sum_{\alpha, \beta =1}^{2} D_{\alpha} (a_{ij}^{\alpha \beta}(x)D_{\beta}u_j),~~i=1,2,
	\end{equation*}
	where $a_{ij}^{\alpha \beta}(x)$ satisfies the following two conditions: $\exists$ positive constants $ M $ and $C$ such that 
	\begin{align*}
	a_{ij}^{\alpha \beta}(x) \phi_{\beta}^j \phi_{\alpha}^i &\geq M |\bm{\phi}|^2~~\forall~ x \in \Omega,\\
	\sum_{n,m=1}^{2} \sum_{\alpha, \beta=1}^{2} |a_{nm}^{\alpha \beta}(x)|^2 &\leq C ~~ \forall~ x \in \Omega,
	\end{align*} (see \cite{dolzmann1995estimates})
	then, there exists a Green's (fundamental) matrix $ \bm{G}^z:= (G_{nm}(\cdot, z))_{n,m=1}^2$ (defined on the domain $\{(x,y) \in \Omega \times \Omega: x\neq y \}$) satisfying the following equations in the sense of distributions
	\begin{align} \label{eq:GR}
	-\sum_{j=1}^{2} F_{ij}G_{jk}(\cdot, z) &= \delta_{ik}\delta_z(\cdot) \quad \text{in}~~ \Omega, \\ \bm{G}^z &= 0 \quad \hspace{1cm}\text{on}~\Gamma_D , \\ a_{\alpha \beta}^{ij} D_{\beta} G_{jk}(\cdot,z) n_{\alpha} & =0 \quad \hspace{1cm} \text{on}~ \Gamma_N \cup \Gamma_C. \notag
	\end{align}
	Also, for any $1 \leq s < 2$,
	\begin{align} \label{bound4}
	\|\bm{G}^z\|_{\bm{W^{1,s}_0(\Omega)}} \leq C.
	\end{align}
	Lastly, for $(x,z) \in (\Omega \times \Omega, x \neq z)$, there holds,
	\begin{align} \label{eq:BGRR}
	|\bm{G}(x,z)| &\lesssim~ \text{ln} \frac{C}{|x-z|},
	\end{align}
\end{lemma}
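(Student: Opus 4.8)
The plan is to reduce the statement to the classical Green's-matrix theory for second-order elliptic systems in divergence form of Dolzmann--M\"uller \cite{dolzmann1995estimates} and Hofmann--Kim \cite{hofmann2007green}, the main point requiring attention being the mixed boundary conditions (homogeneous Dirichlet on $\Gamma_D$, conormal/Neumann on $\Gamma_N\cup\Gamma_C$). First I would regularize the point mass: for small $\rho>0$ put $\delta_z^{\rho}=|B_\rho(z)|^{-1}\mathbf{1}_{B_\rho(z)}$ and, for each fixed $k\in\{1,2\}$, use the pointwise coercivity hypothesis together with Korn's inequality on $\bm{V}=\bm{H}^1_{\Gamma_D}(\Omega)$ (recall $\mathrm{meas}(\Gamma_D)>0$) to invoke Lax--Milgram and obtain a unique $\bm{G}^z_\rho(\cdot,k)=(G_{\rho,1k},G_{\rho,2k})\in\bm{V}$ with
\begin{align*}
\int_\Omega a_{ij}^{\alpha\beta}(x)\,D_\beta G_{\rho,jk}\,D_\alpha v_i\,dx=\int_\Omega \delta_z^{\rho}\,v_k\,dx\qquad\forall\,\bm{v}\in\bm{V}.
\end{align*}
This is the weak form of the regularized system in which the Neumann-type condition on $\Gamma_N\cup\Gamma_C$ is the natural boundary condition, so it is automatically encoded by testing with functions that vanish only on $\Gamma_D$.

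The core of the argument is a family of estimates on $\bm{G}^z_\rho$ that are uniform in $\rho$. For the $\bm{W}^{1,s}_0$ bound I would run a Stampacchia-type truncation argument on the super-level sets $\{|\bm{G}^z_\rho|>t\}$: in two dimensions this produces exponential decay of the distribution function, and a Caccioppoli estimate on the level sets together with H\"older's inequality then yields $\|\bm{G}^z_\rho\|_{\bm{W}^{1,s}_{0}(\Omega)}\le C$ for each $1\le s<2$, the threshold $s=2$ being forced by $\nabla\ln|x-z|\in \bm{L}^{s}_{\mathrm{loc}}$ iff $s<2$. Away from $z$ I would combine Caccioppoli's inequality with the two-dimensional regularity theory for elliptic systems (solutions in $\mathbb{R}^2$ are locally H\"older continuous, also up to the parts of $\partial\Omega$ carrying a single boundary-condition type; here one works on local patches and uses $\bar\Gamma_C\cap\bar\Gamma_D=\emptyset$) to get, for each fixed $d>0$, a bound on $\|\bm{G}^z_\rho\|_{\bm{L}^{\infty}(\Omega\setminus B_d(z))}$ uniform in $\rho$. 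Passing to a subsequence $\rho\to0$, these estimates give a limit $\bm{G}^z$ that converges weakly in $\bm{W}^{1,s}_{0}(\Omega)$ and locally uniformly on $\Omega\setminus\{z\}$; testing the regularized identity with a fixed $\bm{v}\in\bm{V}$ and letting $\rho\to0$ (so $\delta_z^{\rho}\rightharpoonup\delta_z$) identifies $\bm{G}^z$ as the distributional solution of \eqref{eq:GR} with the stated boundary conditions and delivers \eqref{bound4}.

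For the logarithmic bound \eqref{eq:BGRR} I would use a dyadic decomposition of a punctured neighbourhood of $z$ into annuli $A_m=\{2^{-m-1}\le|x-z|\le2^{-m}\}$, $m=0,1,\dots,M$ with $M\sim\log_2(1/|x-z|)$: on each $A_m$ the scale-invariant local sup estimate controls $\sup_{A_m}|\bm{G}^z-c_m|$ (with $c_m$ the average of $\bm{G}^z$ over a slightly fattened annulus) by the energy of $\bm{G}^z$ on that annulus, the successive averages satisfy $|c_m-c_{m-1}|\lesssim 1$, and summing $M$ such increments starting from $|c_0|\lesssim 1$ (supplied by the bounds of the previous paragraph) accumulates the logarithm, giving $|\bm{G}(x,z)|\lesssim\ln(C/|x-z|)$. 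The main obstacle is not a single computation but securing the regularity theory in the required generality: De~Giorgi--Nash--Moser does not apply verbatim to elliptic \emph{systems}, so one must rely either on the special continuity results available in two dimensions or on the explicit structure of the Lam\'e operator, and simultaneously control the behaviour near the junctions of $\Gamma_D$, $\Gamma_N$ and $\Gamma_C$ --- which is exactly where the hypotheses $\mathrm{meas}(\Gamma_D)>0$ and $\bar\Gamma_C\cap\bar\Gamma_D=\emptyset$ enter, and the reason the statement is quoted from \cite{dolzmann1995estimates,hofmann2007green} rather than reproved in full.
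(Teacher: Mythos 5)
The paper gives no proof of this lemma: it is quoted verbatim from the cited references \cite{dolzmann1995estimates,hofmann2007green}, and your sketch (regularized Dirac mass plus Lax--Milgram via Korn's inequality, uniform Stampacchia/Caccioppoli estimates yielding the $\bm{W}^{1,s}$ bound for $s<2$, passage to the limit, and a dyadic-annulus telescoping argument for the logarithmic bound) is an accurate outline of exactly the construction carried out there. You also correctly identify, and appropriately defer to the references, the genuinely delicate points --- the failure of De~Giorgi--Nash--Moser for systems, the special two-dimensional regularity theory, and the mixed boundary conditions --- so your proposal is consistent with the approach the paper relies on.
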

\noindent
where $X \lesssim Y$ denotes $X \leq CY$. Here, $C$ denotes a positive constant independent of the mesh parameter $h$.} 
\section{Preliminaries and discrete variational inequality}\label{sec3}
\par
\noindent
In this section, we introduce some preliminary notations and results which will be used in later analysis. The continuous piece-wise quadratic finite element space is used for the discrete approximation of the continuous space $\bm{V}$. We assume that the triangulation $\cT_h$ is regular \cite{ciarlet2002finite} by means that there are no hanging nodes in $\cT_h$ and the elements in $\cT_h$ are shape-regular. Furthermore, the elements in $\cT_h$ are assumed to be closed.  We denote {by} $\mathbb{P}_k(T)$  to be the set of all polynomials of degree at most $k, 0 \leq k \in \mathbb{Z}$ over the triangle $T \in \cT_h$. We set $h_T =\text{diameter of } T$, $h_{min}:= \text{min}\{h_T~;~T \in \cT_h\}$$~\text{and}~h:=\max\{h_T : T\in\cT_h\}$. Let $|T|$ denote the area of {the element} $T$ and $\cN_h$ is the set of all nodes of the triangulation $\cT_h$. The set of interior nodes is denoted by $\cN_h^i$ and based on the distinct boundaries, we let $\cN_h^D$ to be the set of nodes on $\Gamma_D$. The set of nodes on $\Gamma_N$ is denoted by $\cN_h^N$ and $\cN_h^{\bar{N}}$ is the collection of all nodes on the closure of Neumann boundary. $\cN_h^C$ is the set of nodes on $\Gamma_C$. We consider $\cN_h^0$ to be set of nodes in $\cT_h$ that lies on
$\cN_h \setminus \cN_h^D$. Let us denote by $\cN^T_h$ the set of all vertices of the element $T$. The set of all edges of $\cT_h$ is denoted by $\cE_h$ and interior edges by $\cE_h^i$. We assume $h_e$ to be the length of an edge $ e\in\cE_h$ and moreover, we classify the set $\cM_h$ which is the set of all midpoints of edges of $\cT_h$ in the following way
\begin{align*}
\cM^e_h &= {\text{midpoint of the edge}~e},\\
\cM_h^i&=\text{set of all midpoints for } e \in \cE^i_h,\\
\cM_h^D&=\text{set of all midpoints on } \Gamma_D,\\
\cM_h^C&=\text{set of all midpoints lying on } \Gamma_C,\\
\cM_h^0&=\text{set of all midpoints in } \cT_h \text{ that are in }
\cM_h \setminus \cM_h^D,\\
\cM^T_h&=\text{set of all midpoints of edges of {the element} }T. 
\end{align*}
Let $\Omega_p$ denote the union of all elements sharing the node $p$ and $h_p$ is the diameter of $\Omega_p$. Lastly, we collect the set of edges corresponding to distinct boundaries as
\begin{align*}
\Gamma_{p,I}&= \text{union of edges in the interior of}~\Omega_p~\text{excluding the boundary of }\Omega_p,\\
\Gamma_{p,C}&= \Gamma_C \cap \partial\Omega_p,\\
\Gamma_{p,D}&= \Gamma_D \cap \partial\Omega_p,\\
\Gamma_{p,N}&= \Gamma_N \cap \partial\Omega_p.
\end{align*}
For $\Omega' \subset \Omega$, we set $\|\cdot\|_{m,p,\Omega'} = \|\cdot\|_{W^{m,p}(\Omega')}~\text{where}~m \in \mathbb{Z},~1 \leq p \leq \infty$. {Given a function $\bm{v}$, we define $\bm{v}^+= \text{max}\{\bm{v}, \bm{0}\}$ to be positive part of $\bm{v}$.}
\vspace{0.5 cm}
\par
\noindent
{
The conforming quadratic finite element space $\bm{V}_h$ is defined by
\begin{align*}
\bm{V}_{h}:= \{ \bm{v}_h \in [{C}^0(\bar{\Omega})]^2 ~|~ \bm{v}_h|_T \in [\mathbb{P}_2(T)]^2 ~\forall~ T \in \cT_h \}.
\end{align*}
Analogously,  we define space $\bm{V}^0_{h}$ by incorporating the essential boundary conditions as follows
\begin{align*}
\bm{V}^0_{h}:= \{ \bm{v}_h \in [{C}^0(\bar{\Omega})]^2 ~|~ \bm{v}_h|_T \in [\mathbb{P}_2(T)]^2 ~\forall~ T \in \cT_h~\text{and}~ \bm{v}_h= \bm{0}~\text{on}~ \Gamma_D \}.
\end{align*}}
Let $\{\phi_p\bm{e}_i,~p \in \cN_h \cup \cM_h, ~i=1,2\}$ be the nodal basis {functions} of $\bm{V}_h$.  { Consequently,   $\{\phi_p\bm{e}_i,~p \in \cN^0_h \cup \cM^0_h, ~i=1,2\}$ denotes the basis function for space $\bm{V}^0_{h}$ }. For any  $\bm{v}_h \in \bm{V}^0_h$, we {can} write
\begin{align} \label{eq:ES}
\bm{v}_h = \sum_{p \in \cN^0_h \cup \cM^0_h} \sum_{i=1}^{2} v_{h,i}(p) \phi_p \bm{e}_i,
\end{align}
using 	\[ 
\phi_p(z)=
\begin{cases}
1 & \text{if}~ z=p, \\
0 & \text{if}~ z \neq p,
\end{cases} ~~\quad z \in \mathcal{N}_h^0 \cup \mathcal{M}_h^0,
\] 
where $\bm{v}_h:= (v_{h,1},v_{h,2})$. Define
\begin{align}
\bm{{Z}}_h:= \text{span} \{\phi_p\bm{e}_i,~p \in (\cN_h^0 \cup \cM_h^0) \setminus  (\cN_h^C \cup \cM_h^C) \}.
\end{align}
Then, we have $\bm{\bm{V}}^0_h=\bm{\bm{Z}}_h \oplus \bm{\bm{Z}}^c_h$, where,
\begin{align}
\bm{Z}^c_h= \text{span} \{\phi_p\bm{e}_i,~p \in \cN_h^C \cup \cM_h^C \}
\end{align}
is the orthogonal complement of $\bm{Z}_h$ with respect to the inner product
\begin{align*}
  \langle {\bm{v}, \bm{w} \rangle_{\bm{V}^0_h} := \sum_{T\in \mathcal{T}_h} \frac{|T|}{3}\bigg(\sum_{z \in \cN^T_h} \bm{v}(z)\bm{w}(z) + \sum_{z \in \cM^T_h} \bm{v}(z)\bm{w}(z)}\bigg). 
\end{align*}
\noindent
\begin{Problem}
	Discrete variational inequality: We seek $\bm{u}_h \in \bm{\cK}_h$ such that
	\begin{align}\label{eq:DVI}
	a(\bm{u}_h,\bm{v}_h-\bm{u}_h)\geq L(\bm{v}_h-\bm{u}_h), \quad \forall~ \bm{v}_h\in \bm{\cK}_h
	\end{align}
	holds, where 
	\begin{align*}
	\bm{\cK}_h:=\{\bm{v}_h=(v_{h,1},v_{h,2})\in \bm{V}_h~|~v_{h,1}(p) \leq \chi_h(p)~\forall~ p \in\cN_h^C \cup \cM_h^C\}
	\end{align*}
	and $\chi_h$ denotes the {quadratic} Lagrange interpolation \cite{brenner2007mathematical} of $\chi$ on $\Gamma_C$.
\end{Problem}
	 \begin{remark}
	 	We observe that $\bm{\cK}_h \nsubseteq \bm{\cK}$ and the set $\bm{\cK}_h$ is closed, non empty and convex.
	 \end{remark}
 \noindent
The existence and uniqueness of solution of \eqref{eq:DVI} follows similarly as in the continuous case \cite{glowinski1980numerical}. In the next lemma, we collect some results related to the spaces $\bm{Z}_h$ and $\bm{Z}^c_h$.
\begin{lemma} \label{2.3}
	It holds that
	\begin{align} \label{char1}
	a(\bm{u}_h,\phi_p\bm{e}_i)-L(\phi_p\bm{e}_i)=0 \quad \forall p \in (\cN_h \cup \cM_h) \setminus  (\cN_h^C \cup \cM_h^C)~\text{and}~i=1,2
	\end{align}
	and 
	\begin{align} \label{char2}
	\begin{split}
	\begin{aligned}
	a(\bm{u}_h, {\phi_p\bm{e}_1})-L({\phi_p\bm{e}_1})~ &\leq 0,\\
	a(\bm{u}_h, {\phi_p\bm{e}_2})-L({\phi_p\bm{e}_2})~ &=0,
	\end{aligned} \qquad \forall~p \in \mathcal{N}_h^C \cup \mathcal{M}_h^C.
	\end{split}
	\end{align}
\end{lemma}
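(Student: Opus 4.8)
The plan is to exploit the structure of the discrete admissible set $\bm{\cK}_h$ together with the variational inequality \eqref{eq:DVI}, by testing against carefully chosen perturbations of $\bm{u}_h$. First I would establish \eqref{char1}. For any basis function $\phi_p\bm{e}_i$ with $p \in (\cN_h \cup \cM_h) \setminus (\cN_h^C \cup \cM_h^C)$, the constraint in $\bm{\cK}_h$ only restricts the first component at contact nodes/midpoints, so for any $t \in \mathbb{R}$ the function $\bm{v}_h = \bm{u}_h \pm t\,\phi_p\bm{e}_i$ lies in $\bm{\cK}_h$ (the added perturbation vanishes at all of $\cN_h^C \cup \cM_h^C$, hence does not affect the constraint, and it vanishes on $\Gamma_D$ since $p \notin \cN_h^D \cup \cM_h^D$ — here one uses $\bar\Gamma_C \cap \bar\Gamma_D = \emptyset$ and that $p$ is not a contact node). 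Substituting both choices into \eqref{eq:DVI} gives $\pm t\,\big(a(\bm{u}_h,\phi_p\bm{e}_i) - L(\phi_p\bm{e}_i)\big) \geq 0$, and since $t$ is an arbitrary real number the bracket must vanish, which is \eqref{char1}.

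Next I would prove \eqref{char2}. Fix $p \in \cN_h^C \cup \cM_h^C$. For the second component, the function $\bm{v}_h = \bm{u}_h \pm t\,\phi_p\bm{e}_2$ still lies in $\bm{\cK}_h$ for every $t \in \mathbb{R}$: the perturbation changes only the second component $v_{h,2}$, leaving the first-component constraint $v_{h,1}(q) \leq \chi_h(q)$ untouched at every contact node $q$. The same sign argument as above then forces $a(\bm{u}_h,\phi_p\bm{e}_2) - L(\phi_p\bm{e}_2) = 0$. For the first component, only a one-sided perturbation is admissible: since $\bm{u}_h \in \bm{\cK}_h$ we have $u_{h,1}(q) \leq \chi_h(q)$ for all $q \in \cN_h^C \cup \cM_h^C$, and subtracting a nonnegative multiple of $\phi_p$ from $u_{h,1}$ only decreases its value at $p$ and leaves it unchanged at the other contact nodes (using $\phi_p(q) = \delta_{pq}$ on $\cN_h^0 \cup \cM_h^0$), so $\bm{v}_h = \bm{u}_h - t\,\phi_p\bm{e}_1 \in \bm{\cK}_h$ for all $t \geq 0$. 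Plugging this into \eqref{eq:DVI} yields $-t\,\big(a(\bm{u}_h,\phi_p\bm{e}_1) - L(\phi_p\bm{e}_1)\big) \geq 0$ for all $t \geq 0$, i.e.\ $a(\bm{u}_h,\phi_p\bm{e}_1) - L(\phi_p\bm{e}_1) \leq 0$, which is the first inequality in \eqref{char2}.

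The only genuinely delicate point — and the one I would be most careful about — is verifying membership in $\bm{\cK}_h$ of the perturbed functions, i.e.\ that the added basis functions do not interfere with the pointwise contact constraint and are compatible with the homogeneous Dirichlet condition built into $\bm{V}_h \supset \bm{\cK}_h$; this is precisely where the nodal-interpolation property $\phi_p(q) = \delta_{pq}$ for $p, q \in \cN_h^0 \cup \cM_h^0$ and the separation $\bar\Gamma_C \cap \bar\Gamma_D = \emptyset$ enter. Everything else is the standard two-sided/one-sided variational-inequality testing argument, and no further estimates or auxiliary constructions are needed.
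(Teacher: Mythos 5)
Your proof is correct and follows essentially the same route as the paper: test \eqref{eq:DVI} with the two-sided perturbations $\bm{u}_h \pm \phi_p\bm{e}_i$ away from the contact nodes and for the second component at contact nodes, and with the one-sided perturbation $\bm{u}_h - \phi_p\bm{e}_1$ at contact nodes, using $\phi_p(q)=\delta_{pq}$ to check admissibility in $\bm{\cK}_h$. The only quibble is your parenthetical claim that $p \notin \cN_h^D \cup \cM_h^D$, which does not follow from $p$ being a non-contact node (the index set in \eqref{char1} includes Dirichlet nodes); as the paper defines $\bm{\cK}_h$ inside $\bm{V}_h$ with only the contact constraint, admissibility holds anyway, so this does not affect the argument.
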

{
\begin{proof}
	Let $p \in (\cN_h \cup \cM_h) \setminus  (\cN_h^C \cup \cM_h^C) $. We obtain \eqref{char1} by choosing $\bm{v}_h= \bm{u}_h \pm \phi_p \bm{e}_i \in \bm{\cK}_h$ in \eqref{eq:DVI}. Next, for $p \in \mathcal{N}_h^C \cup \mathcal{M}_h^C$, we choose $\bm{v}_h =\bm{u}_h - \phi_p \bm{e}_1  \in \bm{\cK}_h$ and $\bm{v}_h =\bm{u}_h \pm \phi_p \bm{e}_2  \in \bm{\cK}_h$ in equation \eqref{eq:DVI} to derive the desired estimate \eqref{char2}.
\end{proof}}
\begin{remark}
Using Lemma \ref{2.3}, we have
	\begin{align}
a(\bm{u}_h,\bm{v}_h)- L(\bm{v}_h)=0, \quad \forall~ \bm{v}_h\in \bm{Z}_h.
\end{align}
\end{remark}
\noindent
In the next two lemmas, we state the standard trace inequality \cite{brenner2007mathematical} and inverse inequalities on discrete space $\bm{V}_h$.
\begin{lemma} \label{Lemmmma}
	Let $ e\in \partial T$ for some $T \in \cT_h$ and $p \in [1, \infty)$. The following holds for any $\bm{\psi} \in \bm{W}^{1,p}(T)$
	\begin{align*}
	\|\bm{\psi}\|^p_{\bm{L}^p(e)} \lesssim h_e^{-1} \Big(h_e^p \|\bm{\nabla} \bm{\psi}\|^p_{\bm{L}^p(T)} + \|\bm{\psi}\|^p_{\bm{L}^p(T)} \Big).
	\end{align*}
\end{lemma}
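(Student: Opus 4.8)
The plan is to prove this by passing to a fixed reference triangle and scaling, which is the classical route for local trace inequalities. Since both sides of the claimed bound split as sums over the two scalar components of $\bm{\psi}$, it suffices to establish the scalar statement: for $\psi \in W^{1,p}(T)$ one has $\|\psi\|_{L^p(e)}^p \lesssim h_e^{-1}\big(h_e^p\|\nabla\psi\|_{L^p(T)}^p + \|\psi\|_{L^p(T)}^p\big)$, with the implied constant depending only on $p$ and the shape-regularity of $\cT_h$. By density it is enough to prove this for $\psi \in C^\infty(\bar T)$.

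First I would fix a reference triangle $\hat T$ with a distinguished edge $\hat e$, and let $F_T(\hat x) = B_T\hat x + b_T$ be the affine bijection $\hat T \to T$ taking $\hat e$ onto $e$. Shape-regularity gives $\|B_T\| \lesssim h_T$, $|\det B_T| \sim h_T^2 \sim |T|$, and $h_e \sim h_T$, all with constants depending only on the regularity parameter. On the fixed domain $\hat T$ the trace operator $W^{1,p}(\hat T) \to L^p(\partial\hat T)$ is bounded (the classical trace theorem), so with $\hat\psi := \psi\circ F_T$ we have
\[
\|\hat\psi\|_{L^p(\hat e)}^p \lesssim \|\hat\nabla\hat\psi\|_{L^p(\hat T)}^p + \|\hat\psi\|_{L^p(\hat T)}^p .
\]

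Next I would transform each term back to $T$. The edge map $\hat e \to e$ is affine between segments, so the surface measures satisfy $\|\psi\|_{L^p(e)}^p \sim h_e\,\|\hat\psi\|_{L^p(\hat e)}^p$; change of variables in the volume integral gives $\|\hat\psi\|_{L^p(\hat T)}^p = |\det B_T|^{-1}\|\psi\|_{L^p(T)}^p \sim h_T^{-2}\|\psi\|_{L^p(T)}^p$; and the chain rule $\hat\nabla\hat\psi = B_T^{\top}(\nabla\psi)\circ F_T$ together with $\|B_T\|\lesssim h_T$ gives $\|\hat\nabla\hat\psi\|_{L^p(\hat T)}^p \lesssim \|B_T\|^p|\det B_T|^{-1}\|\nabla\psi\|_{L^p(T)}^p \sim h_T^{p-2}\|\nabla\psi\|_{L^p(T)}^p$. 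Substituting these into the reference inequality yields $h_e^{-1}\|\psi\|_{L^p(e)}^p \lesssim h_T^{p-2}\|\nabla\psi\|_{L^p(T)}^p + h_T^{-2}\|\psi\|_{L^p(T)}^p$, and replacing $h_T$ by $h_e$ via $h_e \sim h_T$ gives exactly the asserted bound. The only real work is the bookkeeping of powers of $h_T$ and checking that every constant depends solely on $p$ and the shape-regularity constant; the analytic content, the trace theorem on $\hat T$, is entirely standard. An alternative that avoids the reference element is to apply the divergence theorem to $|\psi|^p\bm{\beta}$ on $T$ for a suitable affine vector field $\bm{\beta}$ with $\bm{\beta}\cdot\bm{n}=1$ on $e$, $\|\bm{\beta}\|_{L^\infty(T)}\lesssim h_e$, $\|\mathrm{div}\,\bm{\beta}\|_{L^\infty(T)}\lesssim 1$, and then estimate $\mathrm{div}(|\psi|^p\bm{\beta})$ termwise with Young's inequality; I would present the scaling argument as the cleaner of the two.
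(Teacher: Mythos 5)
Your proof is correct: the affine scaling to a reference triangle, the bounded trace operator $W^{1,p}(\hat T)\to L^p(\partial\hat T)$, and the bookkeeping of the powers of $h_T$ (using $h_e\sim h_T$ from shape-regularity) combine exactly to give the stated bound, and the density reduction to smooth functions is legitimate. The paper does not prove this lemma at all — it simply states it as the standard local trace inequality with a citation to Brenner--Scott — and your scaling argument (as well as the alternative via the divergence theorem applied to $|\psi|^p\bm{\beta}$) is precisely the standard proof found there, so there is nothing to reconcile.
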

\noindent
\begin{lemma} \label{Lemama}
	Let $1 \leq p,q \leq \infty$ and $\bm{w}_h \in \bm{V}_h$. Then, it holds that
	\begin{enumerate}
		\item $	\|\bm{w}_h\|_{\bm{W}^{m,p}(T)} \lesssim h^{l-m}_T h_T^{2(\frac{1}{p}-\frac{1}{q})} \|\bm{w}_h\|_{\bm{W}^{l,q}(T)} \quad \forall~T~ \in \cT_h$, $l \leq m$,
		\item $	\|\bm{w}_h\|_{\bm{L}^{\infty}(T)} \lesssim h^{-1}_T \|\bm{w}_h\|_{\bm{L}^2(T)} \quad \forall~ T \in \cT_h$,
		\item $	\|\bm{w}_h\|_{\bm{L}^{\infty}(e)} \lesssim h^{-\frac{1}{2}}_e \|\bm{w}_h\|_{\bm{L}^2(e)} \quad \forall~ e \in \cE_h$.
	\end{enumerate}	
\end{lemma}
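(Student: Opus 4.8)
\textbf{Proof proposal for Lemma~\ref{Lemama}.} The plan is to invoke the classical scaling argument to a reference element combined with the equivalence of all norms on the finite-dimensional polynomial space $[\mathbb{P}_2]^2$. First I would fix the reference triangle $\hat{T}$ with vertices $(0,0),(1,0),(0,1)$ and, for a given $T\in\cT_h$, write the affine map $\Phi_T:\hat{T}\to T$, $\Phi_T(\hat{x})=B_T\hat{x}+b_T$, with $\|B_T\|\lesssim h_T$, $\|B_T^{-1}\|\lesssim h_T^{-1}$, and $|\det B_T|\sim h_T^2$ by shape-regularity of $\cT_h$. For $\bm{w}_h\in\bm{V}_h$ set $\hat{\bm{w}}=\bm{w}_h\circ\Phi_T$, which lies in the fixed finite-dimensional space $[\mathbb{P}_2(\hat{T})]^2$.

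For part~(1), since $[\mathbb{P}_2(\hat{T})]^2$ is finite-dimensional, the seminorms $|\cdot|_{\bm{W}^{m,p}(\hat{T})}$ and $\|\cdot\|_{\bm{W}^{l,q}(\hat{T})}$ are all equivalent on it, so $\|\hat{\bm{w}}\|_{\bm{W}^{m,p}(\hat{T})}\lesssim\|\hat{\bm{w}}\|_{\bm{W}^{l,q}(\hat{T})}$ for every $l\le m$ and every $1\le p,q\le\infty$. Then the standard change-of-variables estimates give, for any $s$ and any $1\le r\le\infty$,
\begin{align*}
|\bm{w}_h|_{\bm{W}^{s,r}(T)}\lesssim h_T^{-s}|\det B_T|^{1/r}\,|\hat{\bm{w}}|_{\bm{W}^{s,r}(\hat{T})},\qquad
|\hat{\bm{w}}|_{\bm{W}^{s,r}(\hat{T})}\lesssim h_T^{s}|\det B_T|^{-1/r}\,|\bm{w}_h|_{\bm{W}^{s,r}(T)},
\end{align*}
coming from the chain rule for the $s$-th derivatives ($\|B_T\|,\|B_T^{-1}\|$ powers) and the Jacobian factor $|\det B_T|^{\pm1/r}\sim h_T^{\pm2/r}$ in the integral. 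Chaining these three inequalities (push $\bm{w}_h$ to $\hat{T}$, apply the reference-element bound, push back) produces the factor $h_T^{l-m}\,h_T^{2(1/p-1/q)}$, which is exactly the claim; the $\bm{W}^{l,q}$ and $\bm{W}^{m,p}$ norms (rather than seminorms) are handled by summing over the orders $0,\dots,l$ and $0,\dots,m$ and keeping the dominant power of $h_T$ since $h_T\lesssim1$.

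Parts~(2) and~(3) are the special cases $m=0$, $p=\infty$, $l=0$, $q=2$ of the same mechanism: for (2) use $\hat{T}$ and norm equivalence $\|\hat{\bm{w}}\|_{\bm{L}^\infty(\hat{T})}\lesssim\|\hat{\bm{w}}\|_{\bm{L}^2(\hat{T})}$ on $[\mathbb{P}_2(\hat{T})]^2$, together with $\|\bm{w}_h\|_{\bm{L}^\infty(T)}=\|\hat{\bm{w}}\|_{\bm{L}^\infty(\hat{T})}$ and $\|\hat{\bm{w}}\|_{\bm{L}^2(\hat{T})}=|\det B_T|^{-1/2}\|\bm{w}_h\|_{\bm{L}^2(T)}\sim h_T^{-1}\|\bm{w}_h\|_{\bm{L}^2(T)}$; for (3) do the one-dimensional version with a reference edge $\hat{e}=[0,1]$, the affine map onto $e$ having Jacobian $\sim h_e$, restricting $\bm{w}_h|_e$ (a polynomial of degree $\le2$ in one variable) and using $\|\hat{\bm{w}}\|_{\bm{L}^\infty(\hat{e})}\lesssim\|\hat{\bm{w}}\|_{\bm{L}^2(\hat{e})}$. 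There is no genuine obstacle here; the only point requiring a little care is bookkeeping the Jacobian powers $h_T^{2/r}$ versus the derivative-chain powers $h_T^{-s}$ so that the exponents combine correctly, and noting that on a finite-dimensional space the constants in the norm equivalences are absolute, hence independent of $h$. $\hfill\square$
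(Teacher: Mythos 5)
Your proof is correct and is the standard scaling-to-the-reference-element argument (affine map, norm equivalence on the finite-dimensional space $[\mathbb{P}_2]^2$, change of variables); the paper simply states these inverse inequalities as standard facts without proof, so there is nothing to contrast with. The exponent bookkeeping in part (1) checks out against the classical inverse estimate, and parts (2) and (3) follow as the special cases you indicate.
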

\section{Discrete contact force density} \label{sec4}
\noindent
In this section, we begin by introducing the discrete counterpart of $\bm{\lambda}$, which is required in proving the main results. Let $\{y^c_i\}_{i=0}^n \in \cN^C_h$ be the enumeration of vertices on the contact boundary and we denote $\cT^C_h$ to be the mesh formed by the trace of $\cT_h$ on $\Gamma_C$ which is characterized by the subdivision of $\{y^c_i\}_{i=0}^n$. Let us denote an element on $\Gamma_C$ with midpoint $m^c_i$ as $t^c_i=[y^c_i,y^c_{i+1}]$. Therefore, we have the following classification of $\Gamma_C$
\begin{align*}
\Gamma_C =\underset{0\leq i \leq n-1}{\bigcup}  q_i^1 \cup q_i^2,
\end{align*}
where $q_i^1~=~[{y}_i^c,~{m}_{i}^c]$ and $q_i^2~=~[{m}_i^c,~{y}_{i+1}^c]$. It holds that $t^c_i=q_i^1 \cup q_i^2$ for $0 \leq i \leq n-1$. Define
\begin{align} \label{Q_h}
\bm{W}_h :=\{ \bm{v}\in[C(\overline \Gamma_C)]^2~: \bm{v}|_{q_i^j} \in [\mathbb{P}_1(q_i^j)]^2,~0\leq i \leq n-1,~j~=1,2\}.
\end{align}
Let $\{\psi_z\bm{e}_i~: z\in\mathcal{N}_h^C \cup \mathcal{M}_h^C, ~i=1,2\}$ be the nodal Lagrange basis for $\bm{W}_h$, where
{
\begin{align*}
\begin{split}
\begin{aligned}
\psi_z(p) = \begin{cases} &1 ~~~\text{if}~ p~=~z, \\
&0~~~~\text{if}~p~\neq~z, \end{cases}
\end{aligned}~~\quad p \in \mathcal{N}_h^C \cup \mathcal{M}_h^C.
\end{split}
\end{align*}}
In order to define the discrete counterpart of $\bm{\lambda}$, we introduce the interpolation operator $\Xi_h: \bm{W}_h \rightarrow \bm{Z}^c_h$, defined as
\begin{align} \label{interp}
\Xi_h\bm{v}~=\sum_{p \in \mathcal{N}_h^C \cup \mathcal{M}_h^C}\sum_{i=1}^{2}  v_i(p)\phi_p\bm{e}_i \quad \forall~\bm{v}=(v_1,v_2) \in \bm{W}_h.
\end{align}
Moreover, $\Xi_h$ is one-one and onto map and hence, its inverse $\Xi^{-1}_h: \bm{Z}^c_h \rightarrow \bm{W}_h$ exists and is defined by
\begin{align} \label{deff}
\Xi_h^{-1}\bm{v}~=\sum_{z \in \mathcal{N}_h^C \cup \mathcal{M}_h^C}\sum_{i=1}^{2}  v_i(z)\psi_z\bm{e}_i \quad \forall~\bm{v}=(v_1,v_2) \in \bm{Z}_h^c.
\end{align}
The following property of $\Xi^{-1}_h$ is clear from the definition \eqref{deff}.
\begin{align}\label{prop}
\Xi_h^{-1}\bm{v}(z)~= \bm{v}(z)~\forall~~{z \in \mathcal{N}_h^C \cup \mathcal{M}_h^C,}~~\forall~\bm{v} \in \bm{Z}_h^c.
\end{align} 
Now, we define the discrete contact force density $\bm{\lambda}_h \in \bm{W}_h$ as
\begin{align}\label{2.8}
\langle \bm{\lambda}_h, \bm{v}_h \rangle_h = L(\Xi_h\bm{v}_h) - a(\bm{u}_h, \Xi_h\bm{v}_h)~\forall~ \bm{v}_h \in \bm{W}_h,
\end{align}
where
\begin{align} \label{innerp}
\langle \bm{w}_h, \bm{v}_h \rangle_h~= \sum_{p \in \mathcal{N}_h^C \cup \mathcal{M}_h^C} \bm{w}_h(p) \cdot \bm{v}_h(p) \int_{\Gamma_{p,C}} \psi_p~ds.
\end{align} Note that, $\langle \cdot, \cdot \rangle_h$ defines an inner product on $\bm{W}_h \times \bm{W}_h$. We collect the properties of $\bm{\lambda_h}$ in the form of next lemma.
\begin{lemma}\label{lem:disL} For $\bm{\lambda}_h=(\lambda_{h,1},\lambda_{h,2}) \in \bm{W}_h$, it holds that
	\begin{align*}
	\begin{split}
	\begin{aligned}
\lambda_{h,1}(p) ~&\geq 0 \\
\lambda_{h,2}(p)~ &=~ 0
	\end{aligned} \qquad \forall~ p \in \mathcal{N}_h^C \cup \mathcal{M}_h^C.
	\end{split}
	\end{align*}
\end{lemma}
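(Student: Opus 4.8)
The plan is to evaluate the defining relation \eqref{2.8} of $\bm{\lambda}_h$ on the individual nodal basis functions $\psi_p \bm{e}_i$ of $\bm{W}_h$, and then to translate the inequality/equality information furnished by \Cref{2.3} into sign information on the coordinates $\lambda_{h,i}(p)$. The key observation is that the inner product $\langle\cdot,\cdot\rangle_h$ defined in \eqref{innerp} is diagonal with respect to the nodal basis: taking $\bm{v}_h = \psi_p \bm{e}_i$ for a fixed $p \in \mathcal{N}_h^C \cup \mathcal{M}_h^C$ and $i \in \{1,2\}$ gives
\begin{align*}
\langle \bm{\lambda}_h, \psi_p \bm{e}_i \rangle_h = \lambda_{h,i}(p) \int_{\Gamma_{p,C}} \psi_p\, ds.
\end{align*}
Since $\psi_p \geq 0$ on $\Gamma_{p,C}$ and $\psi_p$ is not identically zero there, the weight $\int_{\Gamma_{p,C}} \psi_p\, ds$ is strictly positive; hence the sign of $\lambda_{h,i}(p)$ is exactly the sign of $\langle \bm{\lambda}_h, \psi_p\bm{e}_i\rangle_h$.

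Next I would identify the right-hand side of \eqref{2.8} with the quantities controlled in \Cref{2.3}. By the definition \eqref{interp} of the interpolation operator $\Xi_h$, we have $\Xi_h(\psi_p \bm{e}_i) = \phi_p \bm{e}_i$ for every $p \in \mathcal{N}_h^C \cup \mathcal{M}_h^C$ and $i=1,2$, because $\psi_p(z) = \delta_{pz}$ on $\mathcal{N}_h^C \cup \mathcal{M}_h^C$. Therefore \eqref{2.8} reads
\begin{align*}
\lambda_{h,i}(p) \int_{\Gamma_{p,C}} \psi_p\, ds = L(\phi_p \bm{e}_i) - a(\bm{u}_h, \phi_p \bm{e}_i), \qquad p \in \mathcal{N}_h^C \cup \mathcal{M}_h^C,\ i=1,2.
\end{align*}
Now \eqref{char2} of \Cref{2.3} states precisely that $a(\bm{u}_h, \phi_p\bm{e}_1) - L(\phi_p\bm{e}_1) \leq 0$ and $a(\bm{u}_h, \phi_p\bm{e}_2) - L(\phi_p\bm{e}_2) = 0$ for $p \in \mathcal{N}_h^C \cup \mathcal{M}_h^C$. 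Substituting, the right-hand side above is $\geq 0$ when $i=1$ and $=0$ when $i=2$. Dividing by the strictly positive weight $\int_{\Gamma_{p,C}} \psi_p\, ds$ yields $\lambda_{h,1}(p) \geq 0$ and $\lambda_{h,2}(p) = 0$, which is the claim.

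The only genuinely non-routine point is verifying that the weight $\int_{\Gamma_{p,C}}\psi_p\, ds$ is strictly positive for every $p \in \mathcal{N}_h^C \cup \mathcal{M}_h^C$; this requires knowing that $\Gamma_{p,C}$ has positive one-dimensional measure for such $p$ and that $\psi_p$, being a nonnegative Lagrange basis function equal to $1$ at $p$, is positive on a set of positive measure within $\Gamma_{p,C}$. Both are immediate from the construction of $\bm{W}_h$ in \eqref{Q_h} (the basis functions are the standard piecewise-linear hat functions on the refined contact mesh, hence nonnegative with unit value at their node) and from the fact that every contact node or midpoint lies in the closure of at least one subinterval $q_i^j \subset \Gamma_C$. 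Everything else is a direct unwinding of the definitions \eqref{interp}, \eqref{2.8}, \eqref{innerp} together with \Cref{2.3}, so no estimates or limiting arguments are needed.
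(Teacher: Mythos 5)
Your proposal is correct and follows essentially the same route as the paper's proof: the paper also tests \eqref{2.8} with the nodal functions of $\bm{W}_h$ (written out pointwise, but these are exactly $\psi_p\bm{e}_1$ and $\psi_p\bm{e}_2$), uses $\Xi_h(\psi_p\bm{e}_i)=\phi_p\bm{e}_i$, invokes \eqref{char2}, and divides by the strictly positive weight $\int_{\Gamma_{p,C}}\psi_p\,ds$. Your explicit remark that the weight must be strictly positive (not merely nonnegative) is a point the paper glosses over slightly, and is correctly justified.
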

\begin{proof}
	Let the test function $\bm{v}_h \in \bm{W}_h$ be such that 
		\begin{align} \label{trial}
	\begin{split}
	\begin{aligned}
	\bm{v}_h(z) = \begin{cases} &(1,0) ~~~~\text{ if}~z=p, \\
	&(0,0)~~\text{ if}~z \neq p, \end{cases}
	\end{aligned} \quad \forall~ z \in \mathcal{N}_h^C \cup \mathcal{M}_h^C,
	\end{split}
	\end{align}
	where $p \in \mathcal{N}_h^C \cup \mathcal{M}_h^C$ be an arbitrary node. With this choice of $\bm{v_h}$, we have 
	\begin{align} \label{3.4}
	\Xi_h\bm{v}_h =\sum_{z \in \mathcal{N}_h^C \cup \mathcal{M}_h^C}(v_{h,1}(z)\phi_z,v_{h,2}(z)\phi_z)=(\phi_p,0)
	=\phi_p\bm{e}_1.
	\end{align}
Utilizing equations \eqref{char2} and \eqref{3.4}, the following holds
	\begin{align}\label{2.9}
	\langle \bm{\lambda}_h, \bm{v}_h \rangle_h
	&= L(\phi_p\bm{e}_1) - a(\bm{u}_h, \phi_p\bm{e}_1) \geq 0,
	\end{align}
	furthermore, using the equations \eqref{innerp} and \eqref{trial}, we end up on
	\begin{align}\label{2.10}
	\langle \bm{\lambda}_h, \bm{v}_h \rangle_h &= \sum_{z \in \mathcal{N}_h^C \cup \mathcal{M}_h^C} \bm{\lambda}_h(z) \cdot \bm{v}_h(z) \int_{\Gamma_{z,C}} \psi_z~ds \nonumber\\
	&= \bm{\lambda}_h(p) \cdot \bm{v}_h(p) \int_{\Gamma_{p,C}} \psi_p~ds= \lambda_{h,1}(p)\int_{\Gamma_{p,C}} \psi_p~ds.
	\end{align}
	Since $\int_{\Gamma_{p,C}} \psi_p~ds \geq 0$,  using equations $\eqref{2.9}$ and $\eqref{2.10}$, we conclude $\lambda_{h,1}(p)\geq 0$. Analogously, for any $p \in \mathcal{N}_h^C \cup \mathcal{M}_h^C$, we have $\Xi_h\bm{v}_h = \phi_p\bm{e}_2$, where the test function $\bm{v}_h \in \bm{W}_h$ is such that
	\begin{align*}
	\begin{split}
	\begin{aligned}
	\bm{v}_h(z) = \begin{cases} &(0,1) ~~~\text{if}~ z=p, \\
	&(0,0)~~\text{if}~z \neq p, \end{cases}
	\end{aligned} \quad \forall~z \in \mathcal{N}_h^C \cup \mathcal{M}_h^C.
	\end{split}
	\end{align*}
Employing \eqref{char2} and \eqref{trial}, we get
	\begin{align}\label{3.111}
	\langle \bm{\lambda}_h, \bm{v}_h \rangle_h  &= L(\Xi_h\bm{v}_h) - a(\bm{u}_h, \Xi_h\bm{v}_h) \nonumber \\
	&= L(\phi_p\bm{e}_2) - a(\bm{u}_h, \phi_p\bm{e}_2)=0,
	\end{align}
	and
	\begin{align}\label{2.12}
	\langle \bm{\lambda}_h, \bm{v}_h \rangle_h &= \sum_{z \in \mathcal{N}_h^C \cup \mathcal{M}_h^C} \bm{\lambda}_h(z) \cdot \bm{v}_h(z) \int_{\Gamma_{z,C}} \psi_z~ds \nonumber\\
	&= \bm{\lambda}_h(p) \cdot \bm{v}_h(p) \int_{\Gamma_{p,C}} \psi_p~ds = \lambda_{h,2}(p)\int_{\Gamma_{p,C}} \psi_p~ds.
	\end{align} 
	With the help of \eqref{3.111} and {noting} $ \int_{\Gamma_{p,C}} \psi_p~ds > 0$ in \eqref{2.12}, {we have} $\lambda_{h,2}(p)=0$. Since $p \in \mathcal{N}_h^C \cup \mathcal{M}_h^C$ was an arbitrary node, we have the desired result of this lemma.
\end{proof}

\section{quasi discrete contact force density} \label{sec5}
\noindent
In this section, we introduce the quasi discrete contact force density $\bm{\bar{\lambda}}_h \in \bm{V}^*$ which will play an important role in proving the reliability of a posteriori error estimator. First, we collect some tools in order to define $\bm{\bar{\lambda}}_h$. We recall the linear residual $\bm{\mathcal{L}}_h \in \bm{V}^*$ as follows
\begin{align*}
\langle \bm{\mathcal{L}}_h , \bm{v} \rangle_{-1,1} &:= L(\bm{v}) -a(\bm{u}_h,\bm{v}) \quad  \forall ~\bm{v} \in \bm{V}.
\end{align*} 
\begin{remark} \label{4.1}
Let $\bm{v}=(v_1,v_2) \in \bm{V}$, then we write
\begin{align*}
\langle \bm{\mathcal{L}}_h, \bm{v} \rangle_{-1,1} = \sum_{i=1}^2 \langle \mathcal{L}_{h,i}, {v_i} \rangle_{-1,1},
\end{align*}
where
{
\begin{align*}
\langle \mathcal{L}_{h,i}, {v_i} \rangle_{-1,1} &:= L(\bm{w}_i) -a (\bm{u}_h, \bm{w}_i), \quad i=1,2,
\end{align*}}
\noindent
with $\bm{w}_1=(v_1,0)$ and $\bm{w}_2=(0,v_2)$.
\end{remark}
\noindent
Since $\bm{V}_h \subseteq \bm{V}$, we define $\bm{\mathcal{L}}_h \in \bm{V}_h$ as
\begin{align} \label{Rlin}
\langle  \bm{\mathcal{L}}_h , \bm{v}_h \rangle_{-1,1} &:= L(\bm{v}_h) -a(\bm{u}_h,\bm{v}_h), \quad  \forall~ \bm{v}_h \in \bm{V}_h.
\end{align} 
In the next lemma, the key relation between $ \bm{\mathcal{L}}_h$ and $\bm{\lambda}_h$ is {obtained}.
\begin{lemma} \label{relation}
	For $i=1,2$ and $p \in \cN^C_h \cup \cM^C_h$, it holds that 
	\begin{align}
		\langle \bm{\lambda}_h, \psi_p \bm{e}_i \rangle_h = \langle  \bm{\mathcal{L}}_h , \phi_p \bm{e}_i \rangle_{-1,1}.
	\end{align}
\end{lemma}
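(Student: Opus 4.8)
The plan is to unwind both sides of the claimed identity through their definitions and match terms using the interpolation operator $\Xi_h$. First I would recall that $\bm{\lambda}_h \in \bm{W}_h$ satisfies $\langle \bm{\lambda}_h, \bm{v}_h \rangle_h = L(\Xi_h \bm{v}_h) - a(\bm{u}_h, \Xi_h \bm{v}_h)$ for all $\bm{v}_h \in \bm{W}_h$, by the defining equation \eqref{2.8}. Taking $\bm{v}_h = \psi_p \bm{e}_i$ for a fixed $p \in \cN_h^C \cup \cM_h^C$ and $i \in \{1,2\}$, the left-hand side becomes $L(\Xi_h(\psi_p \bm{e}_i)) - a(\bm{u}_h, \Xi_h(\psi_p \bm{e}_i))$. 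The key computation is then to evaluate $\Xi_h(\psi_p \bm{e}_i)$ explicitly: since $\psi_p$ is the nodal Lagrange basis function on $\bm{W}_h$ with $\psi_p(z) = \delta_{pz}$ for $z \in \cN_h^C \cup \cM_h^C$, the definition \eqref{interp} of $\Xi_h$ gives
\begin{align*}
\Xi_h(\psi_p \bm{e}_i) = \sum_{z \in \cN_h^C \cup \cM_h^C} \sum_{j=1}^2 (\psi_p \bm{e}_i)_j(z)\, \phi_z \bm{e}_j = \phi_p \bm{e}_i,
\end{align*}
exactly as in the computation \eqref{3.4} inside the proof of Lemma \ref{lem:disL}.

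Substituting this back, the left-hand side of the claim equals $L(\phi_p \bm{e}_i) - a(\bm{u}_h, \phi_p \bm{e}_i)$. On the other side, the right-hand side $\langle \bm{\mathcal{L}}_h, \phi_p \bm{e}_i \rangle_{-1,1}$ is, by the definition \eqref{Rlin} of $\bm{\mathcal{L}}_h$ acting on $\bm{V}_h$ (and noting $\phi_p \bm{e}_i \in \bm{V}_h$), precisely $L(\phi_p \bm{e}_i) - a(\bm{u}_h, \phi_p \bm{e}_i)$. Comparing the two expressions closes the proof; one should also note on the left that $\langle \bm{\lambda}_h, \psi_p \bm{e}_i \rangle_h$ is well-defined since $\psi_p \bm{e}_i \in \bm{W}_h$, and that the identity \eqref{2.8} is being applied with this choice of test function.

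This lemma is essentially a bookkeeping identity rather than something with a genuine obstacle, so I do not expect any hard step; the only point requiring minor care is making sure the roles of the two residual functionals are kept straight — $\langle \bm{\mathcal{L}}_h, \cdot \rangle_{-1,1}$ is defined both on all of $\bm{V}$ and, by restriction, on $\bm{V}_h$ via \eqref{Rlin}, and it is the latter that is used here — and that $\Xi_h$ maps $\bm{W}_h$ into $\bm{Z}_h^c \subset \bm{V}_h^0 \subset \bm{V}_h$ so that $a(\bm{u}_h, \Xi_h \bm{v}_h)$ and $L(\Xi_h \bm{v}_h)$ make sense. If one wanted to be maximally careful, one could also invoke the Kronecker-delta property \eqref{prop} of $\Xi_h^{-1}$ to double-check the identification $\Xi_h(\psi_p \bm{e}_i) = \phi_p \bm{e}_i$, but the direct computation from \eqref{interp} already suffices.
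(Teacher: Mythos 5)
Your proposal is correct and follows essentially the same route as the paper: both unwind the identity via the definition \eqref{2.8} of $\bm{\lambda}_h$, the identification $\Xi_h(\psi_p\bm{e}_i)=\phi_p\bm{e}_i$ from \eqref{interp}, and the definition \eqref{Rlin} of $\bm{\mathcal{L}}_h$, differing only in which side you start from. No issues.
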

\begin{proof}
For $i=1,2$, we substitute $\bm{v}_h=\phi_p \bm{e}_i$, $p \in \cN^C_h \cup \cM^C_h$ in \eqref{Rlin} to obtain 
\begin{align}
\langle  \bm{\mathcal{L}}_h ,\phi_p \bm{e}_i \rangle_{-1,1} = L(\phi_p \bm{e}_i) -a(\bm{u}_h,\phi_p \bm{e}_i).
\end{align} 
Using \eqref{interp}, we have 
\begin{align}
\langle  \bm{\mathcal{L}}_h ,\phi_p \bm{e}_i \rangle_{-1,1} = L(\Xi_h(\psi_p \bm{e}_i)) -a(\bm{u}_h,\Xi_h(\psi_p \bm{e}_i)).
\end{align} Hence, we have the desired {result} taking into account \eqref{2.8}.
\end{proof}
\noindent
For the sake of convenience, we further use  the following notations to describe the jump terms across different part of boundaries.  Let $e$ be an interior edge such that $e \in \partial T \cap \partial \tilde{T}$ and $\bm{n}$ be the outward unit normal to $T$, then we define
\begin{align} \label{Ijump}
\bm{\cJ}_e^I(\bm{u}_h):= (\bm{\sigma}|_{\tilde{T}}(\bm{u}_h)-\bm{\sigma}|_{T}(\bm{u}_h))\bm{n},
\end{align}
where $\bm{\cJ}_e^I(\bm{u}_h)=
(\cJ_{1,e}^I(\bm{u}_h),\cJ_{2,e}^I(\bm{u}_h))'$.   We have
\begin{align*}
\sum_{p \in \mathcal{N}_h \cup \mathcal{M}_h} \int_{\Gamma_{p,I}} \bm{\cJ}^I(\bm{u}_h)~ds = \sum_{p \in \mathcal{N}_h \cup \mathcal{M}_h} \sum_{e \in \Gamma_{p,I}}\int_{e} \bm{\cJ}_e^I(\bm{u}_h)~ds,
\end{align*}
where $\bm{\cJ}^I(\bm{u}_h)|_e=\bm{\cJ}_e^I(\bm{u}_h)$. Further, let $e \in \Gamma_{p,N}$ be the edge on  Neumann boundary, then we define the jump term $\bm{\cJ}_e^N(\bm{u}_h)=
(\cJ_{1,e}^N(\bm{u}_h),\cJ_{2,e}^N(\bm{u}_h))'$, as follows
\begin{align*}
\bm{\cJ}_e^N(\bm{u}_h) := \bm{g}-\hat{\bm{\sigma}}|_{T}(\bm{u}_h),
\end{align*}
where $e \in \partial T$ and
\begin{align*}
\sum_{p \in \mathcal{N}_h^N \cup \mathcal{M}_h^N} \int_{\Gamma_{p,N}} \bm{\cJ}^N(\bm{u}_h)~ds:= \sum_{p \in \mathcal{N}_h^N \cup \mathcal{M}_h^N} \sum_{e \in \Gamma_{p,N}}\int_{e} \bm{\cJ}_e^N(\bm{u}_h)~ds,
\end{align*} 
with $\bm{\cJ}^N(\bm{u}_h)|_{e}= \bm{\cJ}_e^N(\bm{u}_h).$
Employing integration by parts formula and \eqref{eq:ES}, we have the following representation for $\langle \bm{\mathcal{L}}_h , \bm{v}_h \rangle_{-1,1}$
\begin{align} \label{residual}
\langle  \bm{\mathcal{L}}_h , \bm{v}_h \rangle_{-1,1} &=\sum_{i=1}^{2}  \sum_{p \in \mathcal{N}_h \cup \mathcal{M}_h} \Big[L(v_{h,i}(p) \phi_p \bm{e}_i) - a(\bm{u}_h,v_{h,i}(p) \phi_p \bm{e}_i ) \Big] \nonumber \\ & = \sum_{i=1}^{2}  \sum_{p \in \mathcal{N}_h \cup \mathcal{M}_h} \int_{\Omega_p} \bm{s}(\bm{u}_h)  \cdot v_{h,i}(p) \phi_p \bm{e}_i~dx \nonumber \\ & \hspace{0.3cm}- \sum_{i=1}^{2}  \sum_{p \in \mathcal{N}_h \cup \mathcal{M}_h} \int_{\Gamma_{p,I}} \bm{\cJ}_e^I(\bm{u}_h) \cdot v_{h,i}(p) \phi_p \bm{e}_i~ds \nonumber \\ & \hspace{0.3cm}+ \sum_{i=1}^{2}  \sum_{p \in \mathcal{N}_h^N \cup \mathcal{M}_h^N} \int_{\Gamma_{p,N}} \bm{\cJ}_e^N(\bm{u}_h)\cdot v_{h,i}(p) \phi_p \bm{e}_i~ds \nonumber\\ & \hspace{0.3cm}- \sum_{i=1}^{2}  \sum_{p \in \mathcal{N}_h^C \cup \mathcal{M}_h^C} \int_{\Gamma_{p,C}} \bm{\sigma}(\bm{u}_h) \bm{n} \cdot v_{h,i}(p) \phi_p \bm{e}_i~ds,
\end{align}
where $\bm{s}(\bm{u}_h):=(s_1(\bm{u}_h),s_2(\bm{u}_h))'=\bm{f} + \bm{\text{div}\sigma}(\bm{u}_h)$. For $i=1,2$, we insert the Lemma \ref{2.3} in \eqref{residual} to derive
\begin{align}
\langle  \bm{\mathcal{L}}_h ,\phi_p \bm{e}_i \rangle_{-1,1} = 0 \quad \forall~p \in  (\mathcal{N}_h \cup \mathcal{M}_h) \setminus (\mathcal{N}_h^C \cup \mathcal{M}_h^C), \label{prop1}
\end{align}and
\begin{align}
\langle  \bm{\mathcal{L}}_h ,\phi_p \bm{e}_2 \rangle_{-1,1}=0 \quad \forall~p \in  \mathcal{N}_h^C \cup \mathcal{M}_h^C.\label{prop2}
\end{align}
Motivated from the articles \cite{fierro2003posteriori}  and  \cite{nochetto2005fully,krause2015efficient}, we define the quantity called quasi-discrete contact force density differently to the distinct parts of $\Gamma_C$ in order to achieve the local efficiency estimates.  The article \cite{moon2007posteriori} discussed the mentioned idea in detail to introduce the discrete version of the contact force density in dealing with the parabolic variational inequalities.
First,  we divide actual contact nodes, i.e., $u_{h,1}(p)=\chi_h(p)$ where $p \in \cN^C_h \cup \cM^C_h$ into two distinct categories. The set of full contact nodes is denoted by $\mathcal{N}^{FC}_h:=\{p \in \mathcal{N}_h^C \cup \mathcal{M}_h^C~|~u_{h,1}= \chi_h~\text{on}~\Gamma_{p,C}\}$ and the remaining actual contact nodes are called semi contact nodes and denoted by $\cN^{SC}_h$.
We set $\cN_h^{NC}$ for no actual contact nodes i.e., for $p \in \cN^{NC}_h$, $u_{h,1}(p) \neq \chi_h(p)$. Let $p \in \cN^C_h \cup \cM^C_h$ and define $\bm{s}_p:=(s_{p,1},s_{p,2})$ where 
\begin{align} \label{spd}
s_{p,1}:= \frac{ \langle  \bm{\mathcal{L}}_h , \phi_p \bm{e}_1 \rangle_{-1,1}}{\int_{\Gamma_{p,C}} \psi_p~ds}= \frac{	\langle \bm{\lambda}_h, \psi_p \bm{e}_1 \rangle_h}{\int_{\Gamma_{p,C}} \psi_p~ ds}~~\text{and}~~s_{p,2}:=\lambda_{h,2}(p)=0.
\end{align} 
Let $\bar{\Gamma}_{p,C} \subsetneq \Gamma_{p,C}$ be an contact edge corresponding to node $p$, then for full-contact node $p \in \cN^{FC}_h$ and semi-contact node $p \in \cN^{SC}_h$, we define the scalars
\begin{align} \label{eq:DPD}
e_p(v_1):= \frac{\int_{\bar{\Gamma}_{p,C}} v_1 \psi_p~ds}{\int_{\bar{\Gamma}_{p,C}} \psi_p~ds}.
\end{align}
For $ p \in \cN^{NC}_h$, the scalars $e_p(v_1)$ are defined similarly as in \eqref{eq:DPD}. For all $p \in \mathcal{N}_h^0 \cup \mathcal{M}_h^0$ except the contact nodes with $i= 1$, we set 
\begin{align} \label{nonc}
e_p(v_i):= \frac{\int_{\Omega_p} v_i \phi_p~ds}{\int_{\Omega_p} \phi_p~ds}.
\end{align}
{ Lastly,  for nodes on the Dirichlet boundary we set $e_p(v_i)=0.$ }
 {These} choices for $\bm{s}_p$ and $e_p(v_i)$ are important for the analysis in the next section. The next lemma provide approximation properties corresponding to the constants $e_p(v_i)$. It can be proved using the similar ideas mentioned in \cite{walloth2012,verfurth1996review}. 
	\begin{lemma} \label{approxprop}
	Let $p \in \cN^C_h \cup \cM^C_h$ and $w \in W^{1,1}(\Omega_p)$. {For $e_p(w)= \frac{\int_{\Gamma^*_{p,C}} w \psi_p~ds}{\int_{\Gamma^*_{p,C}} \psi_p~ds}$ defined in \eqref{eq:DPD}}, where $\Gamma^*_{p,C} \subseteq \Gamma_{p,C}$ (e.g. $\bar{\Gamma}_{p,C}$ or $\Gamma_{p,C}$), the following {estimates} hold
	\begin{align}
	\|w- e_p(w)\|_{L^1(\Omega_p)} &\lesssim h_p \|\nabla w\|_{L^1(\Omega_p)}, \label{approx1}\\  \|\nabla(w- e_p(w)) \|_{L^1(\Omega_p)} &\lesssim  \|\nabla w\|_{L^1(\Omega_p)}. \label{approx2}
	\end{align}
\end{lemma}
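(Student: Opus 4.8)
The plan is to reduce both estimates to well-known scaled approximation (Poincaré/Bramble–Hilbert) inequalities on the patch $\Omega_p$, after first replacing the boundary-weighted average $e_p(w)$ by an ordinary $L^1$-type average on $\Omega_p$ and controlling the difference. First I would observe that $e_p(w)$ is, by construction in \eqref{eq:DPD}, a convex-combination average of the trace of $w$ on $\Gamma^*_{p,C}$: since $\psi_p\ge 0$ on $\Gamma^*_{p,C}$ and $\int_{\Gamma^*_{p,C}}\psi_p\,ds>0$, the quantity $e_p(w)$ equals $w$ evaluated at some ``averaged'' point and in particular lies between the essential inf and sup of $w$ on $\Gamma^*_{p,C}$. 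Consequently, for any real constant $c$ one has $|e_p(w)-c| = |e_p(w-c)| \le \|w-c\|_{L^\infty(\Gamma^*_{p,C})}$, and more usefully $|e_p(w)-c|\le \frac{1}{\int_{\Gamma^*_{p,C}}\psi_p ds}\int_{\Gamma^*_{p,C}}|w-c|\psi_p\,ds \le \|w-c\|_{L^1(\Gamma^*_{p,C})}/\int_{\Gamma^*_{p,C}}\psi_p\,ds$. I would pick $c$ to be the full-patch average $\bar w := \frac{1}{|\Omega_p|}\int_{\Omega_p} w\,dx$.

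Next I would estimate the three resulting pieces. Write $w-e_p(w) = (w-\bar w) + (\bar w - e_p(w))$. The first piece is handled by the standard Poincaré inequality on the shape-regular patch $\Omega_p$: $\|w-\bar w\|_{L^1(\Omega_p)} \lesssim h_p\|\nabla w\|_{L^1(\Omega_p)}$. For the second piece, $\bar w - e_p(w)$ is a constant, so $\|\bar w - e_p(w)\|_{L^1(\Omega_p)} = |\Omega_p|\,|\bar w - e_p(w)|$; using the bound from the previous paragraph with $c=\bar w$ together with a trace inequality (Lemma~\ref{Lemmmma}) gives $\int_{\Gamma^*_{p,C}}|w-\bar w|\,ds \lesssim h_p^{-1}\big(h_p\|\nabla w\|_{L^1(\Omega_p)} + \|w-\bar w\|_{L^1(\Omega_p)}\big)$, and then one more application of the Poincaré inequality collapses the right-hand side to $\lesssim \|\nabla w\|_{L^1(\Omega_p)}$. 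Since $\int_{\Gamma^*_{p,C}}\psi_p\,ds \simeq h_p$ (as $\psi_p$ is a fixed piecewise-linear hat on a boundary piece of length comparable to $h_p$) and $|\Omega_p|\simeq h_p^2$, we get $|\Omega_p|\,|\bar w - e_p(w)| \lesssim h_p^2 \cdot h_p^{-1}\cdot \|\nabla w\|_{L^1(\Omega_p)} = h_p\|\nabla w\|_{L^1(\Omega_p)}$. Adding the two pieces yields \eqref{approx1}.

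For \eqref{approx2}, since $e_p(w)$ is a constant, $\nabla\big(w-e_p(w)\big) = \nabla w$ pointwise, so $\|\nabla(w-e_p(w))\|_{L^1(\Omega_p)} = \|\nabla w\|_{L^1(\Omega_p)}$ and the estimate holds trivially (indeed with constant $1$). The only care needed is that all implied constants depend solely on the shape-regularity of $\cT_h$ and not on $h_p$, which follows from a Piola-type scaling argument to a reference patch together with the fact that, for a fixed mesh topology around $p$, there are finitely many reference configurations up to affine equivalence. The main obstacle is the middle term $\bar w - e_p(w)$: one must combine a trace inequality with Poincaré correctly and verify the scaling $\int_{\Gamma^*_{p,C}}\psi_p\,ds\simeq h_p$ uniformly — in particular that $\Gamma^*_{p,C}$ (whether it is $\bar\Gamma_{p,C}$ or the full $\Gamma_{p,C}$) always has length comparable to $h_p$ with shape-regularity constants — but once this scaling is in hand the rest is routine.
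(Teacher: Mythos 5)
Your argument is correct, and it is the standard one: the paper gives no proof of this lemma, merely pointing to \cite{walloth2012,verfurth1996review}, and the route you take (splitting $w-e_p(w)$ into $w-\bar w$ plus the constant $\bar w-e_p(w)$, handling the first by a patch Poincar\'e inequality and the second by the trace inequality of Lemma \ref{Lemmmma} combined with the scaling $\int_{\Gamma^*_{p,C}}\psi_p\,ds\simeq h_p$, and noting that \eqref{approx2} is immediate since $e_p(w)$ is constant) is precisely the argument those references employ. The only point to state carefully is the uniformity of the Poincar\'e constant over shape-regular patches, which you address via scaling to finitely many reference configurations.
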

\begin{lemma}\label{approxprop1}
	Let $T \in \cT_h$, $\phi \in L^1(T)$ and $e_p(\phi)$ be the scalars defined in \eqref{nonc}. Then, the following hold
	\begin{align*}
	\|\phi - e_p(\phi)\|_{L^1(\Omega_p)} &\lesssim h_p\|\nabla \phi\|_{L^1(\Omega_p)}, \\
	\|\phi - e_p(\phi)\|_{L^1(e)} & \lesssim h_p^{1/2}\|\nabla \phi\|_{L^1(\Omega_p)},
	\end{align*}
	where $p \in \cN^T_h \cup \cM^T_h$ and $e \subset \partial T.$
\end{lemma}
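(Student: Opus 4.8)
The plan is to read $e_p(\phi)$ as a weighted mean of $\phi$ over the patch $\Omega_p$, with nonnegative weight $\phi_p$, and then to combine a Poincar\'e--Wirtinger inequality on $\Omega_p$ with the trace inequality of Lemma~\ref{Lemmmma} (scalar form, $p=1$); this mirrors the boundary‑averaged estimates of Lemma~\ref{approxprop} and the techniques of \cite{walloth2012,verfurth1996review}. Throughout I implicitly take $\phi\in W^{1,1}(\Omega_p)$ (otherwise the right‑hand sides are infinite and there is nothing to prove), I use that $T\subset\Omega_p$ since $p\in\cN^T_h\cup\cM^T_h$, and I exploit the shape‑regularity of $\cT_h$, which gives $|\Omega_p|\simeq h_p^2$, $h_e\simeq h_p$ for $e\subset\partial T$, $\|\phi_p\|_{L^\infty(\Omega_p)}\lesssim 1$, and the normalization $\int_{\Omega_p}\phi_p\,dx\gtrsim h_p^2$.

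First I would record that $v\mapsto e_p(v)$ is linear, reproduces constants ($e_p(c)=c$ for $c\in\R$), and is $L^1$‑stable at patch scale,
\begin{align*}
|e_p(v)|\le \frac{\|\phi_p\|_{L^\infty(\Omega_p)}}{\int_{\Omega_p}\phi_p\,dx}\,\|v\|_{L^1(\Omega_p)}\lesssim h_p^{-2}\,\|v\|_{L^1(\Omega_p)}.
\end{align*}
For the first estimate, take $c:=\frac{1}{|\Omega_p|}\int_{\Omega_p}\phi\,dx$ and write $\phi-e_p(\phi)=(\phi-c)-e_p(\phi-c)$, whence
\begin{align*}
\|\phi-e_p(\phi)\|_{L^1(\Omega_p)}\le \|\phi-c\|_{L^1(\Omega_p)}+|\Omega_p|\,|e_p(\phi-c)|\lesssim \|\phi-c\|_{L^1(\Omega_p)},
\end{align*}
using $|\Omega_p|\simeq h_p^2$ together with the stability bound above. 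Since $\Omega_p$ is connected and shape‑regular, the Poincar\'e--Wirtinger inequality yields $\|\phi-c\|_{L^1(\Omega_p)}\lesssim h_p\,\|\nabla\phi\|_{L^1(\Omega_p)}$, which is the first claim.

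For the boundary estimate I would apply Lemma~\ref{Lemmmma} with $p=1$ to $\psi:=\phi-e_p(\phi)$ on the element $T$, noting that $\nabla\psi=\nabla\phi$ because $e_p(\phi)$ is constant; this gives
\begin{align*}
\|\phi-e_p(\phi)\|_{L^1(e)}\lesssim h_e^{-1}\Big(h_e\,\|\nabla\phi\|_{L^1(T)}+\|\phi-e_p(\phi)\|_{L^1(T)}\Big).
\end{align*}
Bounding $\|\phi-e_p(\phi)\|_{L^1(T)}\le\|\phi-e_p(\phi)\|_{L^1(\Omega_p)}$ by the first part and using $h_e\simeq h_p$ then closes the second claim.

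There is no serious obstacle here; the only point meriting care is the very first step, namely that the weight $\phi_p$ really does produce a well‑posed, $L^1$‑stable averaging operator on $\Omega_p$ — equivalently that $\int_{\Omega_p}\phi_p\,dx$ is bounded below by a fixed multiple of $h_p^2$ (i.e. that the relevant nodal shape function is nonnegative on $\Omega_p$ with integral comparable to $|\Omega_p|$). Once that scaling is pinned down, the Poincar\'e and trace steps are entirely local and the hidden constants depend only on the shape‑regularity of $\cT_h$, so the rest is the routine bookkeeping already carried out for Lemma~\ref{approxprop}.
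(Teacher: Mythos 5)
The paper states this lemma without giving a proof, so the only question is whether your argument actually establishes the two estimates; it does not, on two counts. First, your trace step does not produce the claimed power of $h_p$. Lemma \ref{Lemmmma} with exponent $1$ gives $\|\phi-e_p(\phi)\|_{L^1(e)}\lesssim \|\nabla\phi\|_{L^1(T)}+h_e^{-1}\|\phi-e_p(\phi)\|_{L^1(T)}$, and inserting your first estimate together with $h_e\simeq h_p$ yields $\|\phi-e_p(\phi)\|_{L^1(e)}\lesssim \|\nabla\phi\|_{L^1(\Omega_p)}$, i.e.\ a constant of order $1$, not $h_p^{1/2}$. This is not a repairable loss in the bookkeeping: the stated bound is scaling-inconsistent. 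For $\phi(x)=x_1$ one has $\|\phi-e_p(\phi)\|_{L^1(e)}\simeq h_p^{2}$ while $h_p^{1/2}\|\nabla\phi\|_{L^1(\Omega_p)}\simeq h_p^{1/2}\,|\Omega_p|\simeq h_p^{5/2}$, so the inequality with the factor $h_p^{1/2}$ cannot hold with an $h$-independent constant. Your derivation proves the correct scale-invariant version (no factor of $h_p^{1/2}$); claiming that it ``closes the second claim'' as literally stated is wrong, and the discrepancy should have been flagged rather than glossed over.

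Second, the point you yourself isolate as ``the only point meriting care'' genuinely fails for half of the nodes. For a quadratic \emph{vertex} basis function, $\phi_p=\lambda_p(2\lambda_p-1)$ in barycentric coordinates, and $\int_T\phi_p\,dx=2\cdot\tfrac{|T|}{6}-\tfrac{|T|}{3}=0$, hence $\int_{\Omega_p}\phi_p\,dx=0$: the weight changes sign and the denominator in \eqref{nonc} vanishes, so neither $e_p(\phi)$ nor your stability bound $|e_p(v)|\lesssim h_p^{-2}\|v\|_{L^1(\Omega_p)}$ is meaningful for $p\in\cN_h^T$. For midpoint nodes ($\phi_p=4\lambda_i\lambda_j\ge 0$ with $\int_T\phi_p\,dx=|T|/3$) your argument is fine. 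As written, then, the proof covers only $p\in\cM_h^T$; handling $p\in\cN_h^T$ requires replacing $\phi_p$ by a nonnegative weight (e.g.\ the $P_1$ hat function or $|\phi_p|$) or otherwise redefining $e_p$ --- a defect of the definition that your proof inherits but does not resolve. Apart from these two issues, the first ($L^1(\Omega_p)$) estimate via linearity, reproduction of constants, $L^1$-stability of the averaging and Poincar\'e--Wirtinger on the patch is sound and is exactly the standard route the paper points to via \cite{walloth2012,verfurth1996review}.
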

{
\begin{remark}
Note that, by taking into account Poincar$\acute{e}$-Fredrichs inequality and trace inequality,  the estimates in Lemma \ref{approxprop1} are also valid for constants $e_p(\phi),~p \in \cN^D_h \cup \cM^D_h $.
\end{remark}
\noindent
Exploiting the definition of constants $e_p(\cdot)$ and property of nodal basis function $\underset {p \in \mathcal{N}_h \cup \mathcal{M}_h}{\sum} \phi_p =1,$ the quasi discrete contact force density $\bm{\bar{\lambda}_{h}} \in \bm{V^*}$ is defined in the following way
\begin{align} \label{eq:quasid}
\langle \bm{\bar{\lambda}}_h, \bm{v} \rangle_{-1,1} := \sum_{i=1}^{2} \langle \bar{\lambda}_{h,i} , v_i \rangle_{-1,1} \quad \forall~ \bm{v}=(v_1,v_2) \in \bm{V},
\end{align}
where for $i=1,2$
\begin{align} \label{def}
\langle \bar{\lambda}_{h,i} , v_i \rangle_{-1,1}&=
\sum_{p \in \mathcal{N}_h \cup \mathcal{M}_h}\langle \bar{\lambda}_{h,i},  v_i\phi_p \rangle_{-1,1} :=
\sum_{p \in \mathcal{N}_h \cup \mathcal{M}_h} \langle  \bm{\mathcal{L}}_h,  \phi_p \bm{e_i} \rangle_{{-1,1}} e_p(v_i). 
\end{align}}
\begin{remark} \label{sign}
We note that for any sufficiently small $\kappa >0$, we have $\bm{v}_h = \bm{u}_h+ \kappa \phi_p\bm{e}_1 \in \boldsymbol{\mathcal{K}}_h~\forall~p \in \cN^{NC}_h$. Using Lemma \ref{char2} and Lemma \ref{relation}, we deduce $\langle \bm{\lambda}_h, \psi_p \bm{e}_1 \rangle_h =0 \quad \forall~p \in \mathcal{N}_h^{NC}$.
\end{remark}
\noindent
The following two lemmas are the consequences of equations \eqref{eq:quasid}-\eqref{def}.
\begin{lemma} \label{sign1}
	It holds that
	\begin{align}
	\langle \bar{\lambda}_{h,1} , v_1 \rangle_{-1,1} = \sum_{p \in \mathcal{N}_h^C \cup \mathcal{M}_h^C} \langle \bar{\lambda}_{h,1}, v_1 \phi_p \rangle_{-1,1}.
	\end{align}
\end{lemma}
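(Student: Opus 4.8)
The goal is to show that the action of $\bar{\lambda}_{h,1}$ collapses to a sum over contact nodes only, i.e. the contributions from all non-contact nodes vanish. The plan is to start from the defining formula \eqref{def} with $i=1$,
\[
\langle \bar{\lambda}_{h,1}, v_1 \rangle_{-1,1} = \sum_{p \in \mathcal{N}_h \cup \mathcal{M}_h} \langle \bm{\mathcal{L}}_h, \phi_p \bm{e}_1 \rangle_{-1,1}\, e_p(v_1),
\]
and split the index set as $\mathcal{N}_h \cup \mathcal{M}_h = \big((\mathcal{N}_h \cup \mathcal{M}_h) \setminus (\mathcal{N}_h^C \cup \mathcal{M}_h^C)\big) \,\cup\, (\mathcal{N}_h^C \cup \mathcal{M}_h^C)$. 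For the first part of the sum I would invoke \eqref{prop1}, which gives $\langle \bm{\mathcal{L}}_h, \phi_p \bm{e}_1 \rangle_{-1,1} = 0$ for every $p \in (\mathcal{N}_h \cup \mathcal{M}_h) \setminus (\mathcal{N}_h^C \cup \mathcal{M}_h^C)$, so each term in that part is zero regardless of the value of the scalar $e_p(v_1)$. What remains is precisely $\sum_{p \in \mathcal{N}_h^C \cup \mathcal{M}_h^C} \langle \bm{\mathcal{L}}_h, \phi_p \bm{e}_1 \rangle_{-1,1}\, e_p(v_1)$, which by \eqref{def} (read with the summand interpreted as $\langle \bar{\lambda}_{h,1}, v_1 \phi_p \rangle_{-1,1}$) equals $\sum_{p \in \mathcal{N}_h^C \cup \mathcal{M}_h^C} \langle \bar{\lambda}_{h,1}, v_1 \phi_p \rangle_{-1,1}$, which is the claimed identity.

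One bookkeeping point worth being careful about: the definition \eqref{def} uses two different prescriptions for the scalars $e_p(v_1)$ — formula \eqref{eq:DPD} on $\Gamma_{p,C}$ for contact nodes and formula \eqref{nonc} on $\Omega_p$ for the interior/Neumann nodes, with $e_p(v_1)=0$ on the Dirichlet boundary — but this distinction is irrelevant here, because every term whose node is not a contact node is already annihilated by the residual factor via \eqref{prop1}. So no estimate on $e_p(v_1)$ is needed; the argument is purely a reindexing combined with the orthogonality relation \eqref{prop1} coming from Lemma \ref{2.3}.

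There is no real obstacle; the only thing to double-check is that \eqref{prop1} indeed covers all of $(\mathcal{N}_h \cup \mathcal{M}_h) \setminus (\mathcal{N}_h^C \cup \mathcal{M}_h^C)$ including Dirichlet nodes, which it does since \eqref{prop1} is stated for all $p \in (\mathcal{N}_h \cup \mathcal{M}_h)\setminus(\mathcal{N}_h^C \cup \mathcal{M}_h^C)$ and was derived by inserting Lemma \ref{2.3} (specifically \eqref{char1}) into the residual representation \eqref{residual}. Thus the proof is a two-line computation: write the full sum, discard the vanishing terms using \eqref{prop1}, and recognize the leftover as the asserted restricted sum.
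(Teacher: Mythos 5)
Your proof is correct and follows essentially the same route as the paper: write out the full sum from \eqref{def}, annihilate the non-contact-node terms via \eqref{prop1}, and recognize the surviving sum as the restricted sum over $\mathcal{N}_h^C \cup \mathcal{M}_h^C$. The extra remark about the different prescriptions for $e_p(v_1)$ being irrelevant is a fine observation but not needed; the argument matches the paper's.
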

\begin{proof}
{
Utilizing the definition of quasi discrete contact force density $\bar{\bm{\lambda}}_{h}$,  we find 
\begin{align}\label{1311}
\langle \bar{\lambda}_{h,1}, v_1 \rangle_{-1,1} = \sum_{p \in \mathcal{N}_h \cup \mathcal{M}_h}\langle \bar{\lambda}_{h,1}, v_1\phi_p \rangle_{-1,1} =\sum_{p \in \mathcal{N}_h \cup \mathcal{M}_h} \langle  \bm{\mathcal{L}}_h,  \phi_p \bm{e_1} \rangle_{{-1,1}} e_p(v_1) .
\end{align}
Further,  taking into account that $\langle \bm{\mathcal{L}}_h,  \phi_p \bm{e_1} \rangle_{{-1,1}}=0~\forall~p \in (\mathcal{N}_h \cup \mathcal{M}_h) \backslash (\mathcal{N}^C_h \cup \mathcal{M}^C_h)$,  the last equation \eqref{1311} reduces to

\begin{align} \label{131}
\langle \bar{\lambda}_{h,1}, v_1 \rangle_{-1,1} = \sum_{p \in \mathcal{N}_h^C \cup \mathcal{M}_h^C} \langle  \bm{\mathcal{L}}_h,  \phi_p \bm{e_1} \rangle_{{-1,1}} e_p(v_1) .
\end{align}}
Thus,  we have 
	\begin{align}\label{t1}
	\langle \bar{\lambda}_{h,1}, v_1 \rangle_{-1,1} = \sum_{p \in \mathcal{N}_h^C \cup \mathcal{M}_h^C} \langle \bar{\lambda}_{h,1}, v_1 \phi_p \rangle_{-1,1}.
	\end{align}
	This completes the proof. 
\end{proof}
\begin{lemma} \label{sign2}
	It holds that
	\begin{align}
	\langle  \bm{\bar{\lambda}}_h, \bm{v} \rangle_{-1,1} = \langle  \bar{\lambda}_{h,1}, v_1 \rangle_{-1,1} \geq 0 \quad \forall~ v_1 \geq 0,
	\end{align}
	where $\bm{v}=(v_1,v_2) \in \bm{V}.$
	\begin{proof}
{
In view of equations \eqref{prop1} and \eqref{prop2},  we find 
$\langle  \bm{\mathcal{L}}_h,  \phi_p \bm{e_2} \rangle_{{-1,1}} =0~\forall~p \in \mathcal{N}_h \cup \mathcal{M}_h$. Thus,  using relation \eqref{def} we have 
\begin{align*}
\langle  \bm{\bar{\lambda}}_h, \bm{v} \rangle_{-1,1} = \langle  \bar{\lambda}_{h,1}, v_1 \rangle_{-1,1}. \end{align*} 
Further with the help of equation \eqref{t1},  we find
\begin{align*}
\langle  \bm{\bar{\lambda}}_h, \bm{v} \rangle_{-1,1} = \langle  \bar{\lambda}_{h,1}, v_1 \rangle_{-1,1} =  \sum_{p \in \mathcal{N}_h^C \cup \mathcal{M}_h^C} \langle \bar{\lambda}_{h,1}, v_1 \phi_p \rangle_{-1,1}.
\end{align*}
Exploiting relation \eqref{relation} and the definition of constant $\bm{s_p}$, we find
 \begin{align}\label{rel10}
\bm{\langle \bm{\bar{\lambda}_{h}}} , \bm{v} \rangle_{{-1,1}}&= \sum_{p \in \mathcal{N}_h^C \cup \mathcal{M}_h^C}\langle \bm{\mathcal{L}_h},  \phi_p \bm{e_1} \rangle_{{-1,1}} e_p(v_1) \nonumber\\
&= \sum_{p \in \mathcal{N}_h^C \cup \mathcal{M}_h^C}\langle \bm{\lambda_{h}}, \psi_p \bm{e_1} \rangle_{{h}}e_p(v_1) \nonumber\\
&=\sum_{p \in \mathcal{N}_h^C \cup \mathcal{M}_h^C}s_{p,1} e_p(v_1) \int_{\gamma_{p,C}}\psi_p~ds.
\end{align}
Using Lemma \ref{lem:disL}, we deduce that $s_{p,1} = \lambda_{h,1}(p) \geq 0$.  Hence the assertion follows by taking into account that  $e_p(v_1) \geq 0 ~\forall~v_1 \geq 0$ together with the fact that $\int_{\gamma_{p,C}}\psi_p~ds>0$.}
	\end{proof}
\end{lemma}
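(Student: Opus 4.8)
The plan is to assemble the statement from the structural lemmas already in hand, so the argument is essentially bookkeeping. First I would dispose of the second component: by \eqref{prop1} and \eqref{prop2} one has $\langle \bm{\mathcal{L}}_h, \phi_p\bm{e}_2 \rangle_{-1,1} = 0$ for every $p \in \mathcal{N}_h \cup \mathcal{M}_h$, so the defining formula \eqref{def} forces $\langle \bar{\lambda}_{h,2}, v_2 \rangle_{-1,1} = 0$, whence $\langle \bm{\bar{\lambda}}_h, \bm{v} \rangle_{-1,1} = \langle \bar{\lambda}_{h,1}, v_1 \rangle_{-1,1}$ for all $\bm{v} = (v_1,v_2) \in \bm{V}$. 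This already gives the claimed equality of the two pairings, and reduces the nonnegativity claim to the first component.

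Second, I would localize to the contact boundary. Lemma \ref{sign1} reduces $\langle \bar{\lambda}_{h,1}, v_1 \rangle_{-1,1}$ to the sum over $p \in \mathcal{N}_h^C \cup \mathcal{M}_h^C$ of $\langle \bm{\mathcal{L}}_h, \phi_p\bm{e}_1 \rangle_{-1,1}\, e_p(v_1)$. On each such term I would substitute the identity of Lemma \ref{relation}, namely $\langle \bm{\mathcal{L}}_h, \phi_p\bm{e}_1 \rangle_{-1,1} = \langle \bm{\lambda}_h, \psi_p\bm{e}_1 \rangle_h$, and then unfold the discrete inner product $\langle \cdot,\cdot\rangle_h$ together with the definition \eqref{spd} of $s_{p,1}$ to obtain $\langle \bm{\mathcal{L}}_h, \phi_p\bm{e}_1 \rangle_{-1,1} = s_{p,1} \int_{\Gamma_{p,C}} \psi_p\,ds$. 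This yields the clean representation
\[
\langle \bm{\bar{\lambda}}_h, \bm{v} \rangle_{-1,1} = \sum_{p \in \mathcal{N}_h^C \cup \mathcal{M}_h^C} s_{p,1}\, e_p(v_1) \int_{\Gamma_{p,C}} \psi_p\,ds .
\]

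Third, I would check the sign of each factor in this finite sum. Lemma \ref{lem:disL} gives $s_{p,1} = \lambda_{h,1}(p) \geq 0$; the weight $\int_{\Gamma_{p,C}} \psi_p\,ds$ is strictly positive because $\psi_p$ is a nonnegative piecewise linear Lagrange basis function of $\bm{W}_h$; and $e_p(v_1) \geq 0$ whenever $v_1 \geq 0$, since $e_p(v_1)$ as defined in \eqref{eq:DPD} is the $\psi_p$-weighted average of $v_1$ over $\bar{\Gamma}_{p,C}$ (or $\Gamma_{p,C}$) and $\psi_p \geq 0$ there. A sum of nonnegative terms is nonnegative, which is the assertion.

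I do not anticipate a genuine obstacle: once Lemmas \ref{sign1}, \ref{relation} and \ref{lem:disL} are available, the result drops out by substitution. The one point that must be handled with care is the nonnegativity of $e_p(v_1)$: it relies on the weight in the averaging being the piecewise linear trace basis $\psi_p$, which is nonnegative, rather than the quadratic bulk basis $\phi_p$, which changes sign on quadratic elements. Hence it is essential that at the contact nodes $e_p$ is taken in the form \eqref{eq:DPD}, and this is exactly the reason $e_p$ was defined separately on $\Gamma_C$.
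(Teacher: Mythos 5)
Your proposal is correct and follows essentially the same route as the paper's proof: reduce to the first component via \eqref{prop1}--\eqref{prop2}, localize to contact nodes via Lemma \ref{sign1}, rewrite each term through Lemma \ref{relation} and the definition of $s_{p,1}$, and conclude from the sign of each factor using Lemma \ref{lem:disL}. Your closing remark on why the averaging weight must be the nonnegative trace basis $\psi_p$ rather than the sign-changing quadratic basis $\phi_p$ is a valid and worthwhile point of care, but it does not alter the argument.
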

\par
\noindent
The sign property of quasi discrete contact force {discussed in} Lemma \ref{sign2} plays a key role in proving the reliability estimates.
\section{A posteriori estimates} \label{sec6}
\noindent
In this section, we begin by defining the estimators
	\begin{align*}
\eta_1&= \underset{p \in \mathcal{N}_h \cup \mathcal{M}_h}{max}~\eta_{1,p}, \quad\quad\text{with} \qquad~~~\eta_{1,p}:=h_p^2 \|\bm{s}(\bm{u}_h)\|_{\bm{L}^{\infty}(\Omega_p)},\\
\eta_2&= \underset{p \in \mathcal{N}_h \cup \mathcal{M}_h}{max}~\eta_{2,p},\quad \quad\text{with} \qquad~~~\eta_{2,p}:=h_p \|\bm{\cJ}_e^I(\bm{u}_h)\|_{\bm{L}^{\infty}(\Gamma_{p,I})} ,\\
\eta_3&= \underset{p \in \mathcal{N}_h^N \cup \mathcal{M}_h^N}{max}~\eta_{3,p}, \quad \quad~\text{with} \qquad~~~\eta_{3,p}:=h_p \|\bm{\cJ}_e^N(\bm{u}_h)\|_{\bm{L}^{\infty}(\Gamma_{p,N})},\\
\eta_4&= \underset{p \in \mathcal{N}_h^C \cup \mathcal{M}_h^C}{max}~\eta_{4,p}, \qquad\text{with} \qquad~~~\eta_{4,p}:=h_p { \|\hat{\sigma_2}(\bm{u}_h)\|_{\bm{L}^{\infty}(\Gamma_{p,C})}},\\
\eta_5&= \underset{p \in \mathcal{N}_h^C \cup \mathcal{M}_h^C}{max}~\eta_{5,p}, \qquad\text{with} \qquad~~~\eta_{5,p}:=h_p \|\hat{\sigma_1}(\bm{u}_h)\|_{\bm{L}^{\infty}(\Gamma_{p,C})}.\\
\end{align*}
\subsection{Reliability of the error estimator}
Define the space $$\bm{U}:= \{\bm{v} \in \bm{W}^{2,1}(\Omega)~;~ \bm{\sigma(\bm{v})\bm{n}}=0~ \text{on}~ \Gamma_C \cup \Gamma_N\}.$$ We now state the main result of this section, namely the reliability of the error estimator $\eta_h$ {which is} defined in equation \eqref{est}.
\begin{theorem} \label{thm:rel}
	Let $\bm{u}$ and $\bm{u}_h$ be the continuous and discrete solution of equations \eqref{eq:CVI} and \eqref{eq:DVI}, respectively. Let $\bm{\lambda}$ and $\bar{\bm{\lambda}}_h$ be defined as in \eqref{eq:sigmadef} and \eqref{eq:quasid}, respectively. Then, the following error bound holds:
	 \begin{equation}  \label{rel1}
	\max{\{\|\bm{u}-\bm{u}_h\|_{\bm{L}^{\infty}(\Omega)}~,~ \|\bm{\lambda}-\bar{\bm{\lambda}}_h\|_{-2,\infty,\Omega}\}} \lesssim \eta_h,
	\end{equation}
where the operator norm $\|\cdot\|_{-2,\infty,\Omega}$ is defined by
	\begin{align} \label{norm}
	\|\bm{q}\|_{-2,\infty,\Omega} := \text{sup} \{ \langle \bm{q}, \bm{v} \rangle_{-1,1} : \bm{v} \in \bm{V} \cap \bm{U}~,~ \|\bm{v}\|_{\bm{W}^{2,1}(\Omega)}\leq 1\},
	\end{align} and the error estimator $\eta_h$ is given by
	\begin{align} \label{est}
 \eta_h&:={l_h} \Psi + \|(u_{h,1}-\chi)^{+}\|_{L^{\infty}(\Gamma_C)} + \|(\chi-u_{h,1})^+\|_{L^{\infty}(\Lambda_h^{C})},
	\end{align} 
with
	\begin{align*}
{l_h=1+|\text{log}(h_{min})|^2} \quad;\quad	\Psi:= \sum_{i=1}^{5} \eta_i\quad;~~	{\Lambda}^C_h:= \{\Gamma_{p,C}:p \in \cN^C_h \cup \cM^C_h~\text{such that}~ \langle \bm{\lambda}_h, \psi_p \bm{e}_1 \rangle_h >0 \}.
	\end{align*} 
\end{theorem}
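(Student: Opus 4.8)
\textbf{Proof plan for Theorem \ref{thm:rel}.}
The overall strategy follows the Green's matrix route of \cite{KP:2021:Signorini}, but now with the quadratic discrete solution and the quasi discrete contact force density $\bar{\bm{\lambda}}_h$ built in Section \ref{sec5}. The plan is to fix a point $z \in \Omega$ where $|(\bm{u}-\bm{u}_h)(z)|$ is (essentially) attained, and to test a suitable representation of the error against the Green's matrix $\bm{G}^z$ of Lemma \ref{regularity}. First I would introduce upper and lower barrier functions obtained by modifying $\bm{u}_h$ with a corrector (in the spirit of \cite{nochetto2003pointwise,nochetto2005fully}) so that the modified functions are admissible comparison functions for the variational inequality \eqref{eq:CVI}; the corrector is controlled pointwise by the last two terms of $\eta_h$, namely $\|(u_{h,1}-\chi)^{+}\|_{L^{\infty}(\Gamma_C)}$ and $\|(\chi-u_{h,1})^+\|_{L^{\infty}(\Lambda_h^C)}$, which quantify the non-conformity $\bm{\cK}_h \not\subseteq \bm{\cK}$ and the mismatch on the contact set. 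Then I would write, for $\bm{v} \in \bm{V}\cap\bm{U}$ with $\|\bm{v}\|_{\bm{W}^{2,1}(\Omega)}\le 1$,
\begin{align*}
\langle \bm{\lambda}-\bar{\bm{\lambda}}_h, \bm{v}\rangle_{-1,1} = \big(a(\bm{u}_h,\bm{v})-L(\bm{v})\big) + \langle \bm{\lambda},\bm{v}\rangle_{-1,1} - \langle \bar{\bm{\lambda}}_h,\bm{v}\rangle_{-1,1} + a(\bm{u}-\bm{u}_h,\bm{v}),
\end{align*}
and use \eqref{eq:sigmadef} to cancel the displacement part, reducing the dual-norm bound to estimating $\langle \bm{\mathcal{L}}_h - \bar{\bm{\lambda}}_h,\bm{v}\rangle_{-1,1}$.

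The core computation is the elementwise/edgewise expansion of $\langle \bm{\mathcal{L}}_h,\bm{v}\rangle_{-1,1}$. Using the partition of unity $\sum_{p}\phi_p = 1$ and subtracting the nodal constants $e_p(v_i)$ — which is legitimate precisely because of \eqref{prop1}, \eqref{prop2} and the definition \eqref{def} of $\bar{\bm{\lambda}}_h$ — I would write
\begin{align*}
\langle \bm{\mathcal{L}}_h - \bar{\bm{\lambda}}_h,\bm{v}\rangle_{-1,1} = \sum_{i=1}^{2}\sum_{p}\Big[ \int_{\Omega_p}\bm{s}(\bm{u}_h)\cdot(v_i - e_p(v_i))\phi_p\,dx - \int_{\Gamma_{p,I}}\bm{\cJ}_e^I(\bm{u}_h)\cdot(v_i-e_p(v_i))\phi_p\,ds + \cdots \Big],
\end{align*}
where the Neumann and contact boundary terms appear analogously. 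To each local term I would apply the approximation estimates of Lemma \ref{approxprop} and Lemma \ref{approxprop1} (and the trace inequality Lemma \ref{Lemmmma}), together with Hölder's inequality, to bound it by $(\eta_{1,p}+\cdots+\eta_{5,p})\,\|\nabla v\|_{L^1(\Omega_p)}$ or $\cdot\,\|\nabla^2 v\|_{L^1(\Omega_p)}$; summing over $p$ with finite overlap of the patches $\Omega_p$ yields the bound $\Psi\,\|\bm{v}\|_{\bm{W}^{2,1}(\Omega)}$ for the dual norm (the $l_h$ factor does not enter this half). The sign property Lemma \ref{sign2}, $\langle \bar{\bm{\lambda}}_h,\bm{v}\rangle_{-1,1}\ge 0$ for $v_1\ge 0$, and Lemma \ref{Lemmaa} for $\bm{\lambda}$ are what make the barrier comparison work in the $\bm{L}^\infty$ estimate for $\bm{u}-\bm{u}_h$.

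For the $\bm{L}^\infty$ bound on $\bm{u}-\bm{u}_h$ I would insert $\bm{G}^z$: from the defining equations \eqref{eq:GR} of the Green's matrix, $(\bm{u}-\bm{u}_h)(z)$ equals $a(\bm{u}-\bm{u}_h,\bm{G}^z)$ up to the contact boundary contributions, and then the logarithmic bound \eqref{eq:BGRR} together with the $\bm{W}^{1,s}_0$ bound \eqref{bound4} ($1\le s<2$) produces the factor $l_h = 1 + |\log h_{min}|^2$ after the standard splitting of $\Omega$ into the dyadic annuli around $z$ and the use of inverse estimates (Lemma \ref{Lemama}) on the innermost region of size $\sim h_{min}$. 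The residual terms are then handled exactly as above, now paired against $\bm{G}^z$ rather than a generic $\bm{v}$, and the barrier functions absorb the sign-constrained part so that only $\|(u_{h,1}-\chi)^+\|_{L^\infty(\Gamma_C)}$ and $\|(\chi-u_{h,1})^+\|_{L^\infty(\Lambda_h^C)}$ survive from the contact mismatch. I expect the main obstacle to be twofold: first, making the corrector/barrier construction simultaneously (i) admissible in $\bm{\cK}$, (ii) a valid competitor on the discrete side, and (iii) pointwise controlled by the last two estimator terms — the asymmetry between $\bm{\cK}$ and $\bm{\cK}_h$ on the semi-contact nodes $\cN_h^{SC}$ is delicate; second, the logarithmic bookkeeping for $\bm{G}^z$ near the singularity, where one must combine \eqref{eq:BGRR}, \eqref{bound4} and the inverse inequalities carefully to get exactly the $|\log h_{min}|^2$ power and no worse. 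Everything else is a routine, if lengthy, residual-and-approximation argument.
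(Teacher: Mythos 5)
Your plan follows the paper's architecture -- corrector, upper/lower barrier functions, the sign property of $\bar{\bm{\lambda}}_h$ (Lemma \ref{sign2}), residual expansion against the nodal averages $e_p(\cdot)$, and the Green's matrix with a dyadic annular decomposition producing $l_h$ -- so the route is the right one. Two steps, however, are genuinely off as written. First, your displayed identity for $\langle \bm{\lambda}-\bar{\bm{\lambda}}_h,\bm{v}\rangle_{-1,1}$ is false: substituting \eqref{eq:sigmadef} into your right-hand side collapses it to $-\langle\bar{\bm{\lambda}}_h,\bm{v}\rangle_{-1,1}$. More importantly, the displacement term does \emph{not} cancel. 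The correct identity is $\langle \bm{\lambda}-\bar{\bm{\lambda}}_h,\bm{v}\rangle_{-1,1} = \bm{\mathcal{G}}_h(\bm{v}) - a(\bm{u}-\bm{u}_h,\bm{v})$, and $a(\bm{u}-\bm{u}_h,\bm{v})$ must be retained and estimated by integration by parts -- this is exactly where the condition $\bm{\sigma}(\bm{v})\bm{n}=\bm{0}$ on $\Gamma_C\cup\Gamma_N$ in the definition of $\bm{U}$ is used -- giving $|a(\bm{u}-\bm{u}_h,\bm{v})|\lesssim \|\bm{u}-\bm{u}_h\|_{\bm{L}^{\infty}(\Omega)}|\bm{v}|_{\bm{W}^{2,1}(\Omega)}$. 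Hence the dual-norm half of \eqref{rel1} is derived \emph{from} the $\bm{L}^{\infty}$ half together with $\|\bm{\mathcal{G}}_h\|_{-2,\infty,\Omega}\lesssim\Psi$; it is not an independent cancellation.

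Second, the Green's matrix cannot be paired directly with $\bm{u}-\bm{u}_h$: the error satisfies no linear equation, so ``$(\bm{u}-\bm{u}_h)(z)=a(\bm{u}-\bm{u}_h,\bm{G}^z)$ up to contact boundary contributions'' is not a workable starting point. The paper's resolution is to apply the Green's representation to the corrector $\bm{\zeta}$, which solves the \emph{linear} problem $a(\bm{\zeta},\bm{v})=\bm{\mathcal{G}}_h(\bm{v})$ for all $\bm{v}\in\bm{V}$, so that $\zeta^l(z_0)=\tilde{\bm{\mathcal{G}}_h}(\bm{G}_l^{z_0})$ holds exactly; the barrier comparison (Lemmas \ref{lem:barrier} and \ref{lem:barrier1}) then transfers the bound $\|\bm{\zeta}\|_{\bm{L}^{\infty}(\Omega)}\lesssim l_h\Psi$ to $\bm{u}-\bm{u}_h$ at the price of the two constraint-mismatch terms. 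Relatedly, the corrector is \emph{not} ``controlled pointwise by the last two terms of $\eta_h$''; it is controlled by $l_h\Psi$, while the last two terms only control the constant shifts $\bm{b}$ and $\bm{y}$ entering the barriers. With these two repairs your plan coincides with the paper's proof.
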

\par
\noindent
{To prove the estimate \eqref{rel1}, we begin by introducing} the residual functional $\bm{\mathcal{G}}_h:\bm{V} \rightarrow \mathbb{R}$ as
\begin{align}\label{eq:GAL}
\bm{\mathcal{G}}_h(\bm{v}) := a(\bm{u}-\bm{u}_h,~\bm{v}) + \langle \bm{\lambda}-\bm{\bar{\lambda}}_h,\bm{v} \rangle_{-1,1} \quad \forall~ \bm{v}~\in{\bm{V}}.
\end{align}
In the later analysis, we need an extension of the bilinear form $a(\cdot,\cdot)$ which would allow to test with functions less regular than $\bm{H^1(\Omega)}$.  For $p >2, 1 \leq q < 2$ and $\frac{1}{p}+\frac{1}{q}=1$, let \begin{align*}
	\bm{V}_0:= \bm{V} + \bm{W}^{1,p}(\Omega), \quad \bm{V}_1:= \bm{V}+ \bm{W}^{1,q}(\Omega).
	\end{align*}
	\par
	\noindent
	Let $\tilde{a}(\bm{z},\bm{v})$ denotes the extended bilinear form on $\bm{V}_0 \times \bm{V}_1$ defined by $\tilde{a}(\bm{z},\bm{v}):= \int_{\Omega}\bm{\sigma(z)} : \bm{\epsilon(v)}~dx$ which is such that $\tilde{a}(\bm{z},\bm{v})={a}(\bm{z},\bm{v}) ~~\forall~ \bm{z},\bm{v} \in \bm{V}$.
	\noindent
	Then, $\bm{\lambda}^* \in \bm{V}_1^*$ is defined by
	\begin{align}\label{eq:extsigmadef}
	\langle \bm{\lambda}^*, \bm{v}\rangle=L(\bm{v}) -\tilde{a}(\bm{u},\bm{v})\quad \forall~\bm{v}\in \bm{V}_1.
	\end{align} For any $\bm{v} \in \bm{V}_1$, we define $\tilde{\bm{\mathcal{G}}_h}$ by
	\begin{align} \label{eq:extGAL}
	\tilde{\bm{\mathcal{G}}_h}(\bm{v}):= \tilde{a}(\bm{u}-\bm{u}_h,\bm{v}) + \langle \bm{\lambda}^*-\bar{\bm{\lambda}}_h, \bm{v} \rangle_{-1,1}.
	\end{align}
	Lastly, it holds that $\bm{\mathcal{G}_h}(\bm{v}) =\tilde{\bm{\mathcal{G}}_h}(\bm{v}) \quad \forall~ \bm{v} \in \bm{V}$.
	\begin{remark}
		We mention the extended notations for the linear residual as
			\begin{align} \label{extres}
		\langle \tilde{ \bm{\mathcal{L}}}_h , \bm{v} \rangle_{-1,1} &:= L(\bm{v}) -\tilde{a}(\bm{u}_h,\bm{v}), \quad  \forall~ \bm{v} \in \bm{V}_1,
		\end{align} 
		with components defined as
		\begin{align*}
		\langle \tilde{ \bm{\mathcal{L}}}_{h}, \bm{v} \rangle_{-1,1} = \sum_{i=1}^2 \langle  \tilde{\mathcal{L}}_{h,i}, {v_i} \rangle_{-1,1},
		\end{align*}
		where $ \tilde{\mathcal{L}}_{h,i}$ are chosen in the way as $\mathcal{L}_{h,i}$ in Remark \eqref{4.1} for $i=1,2$.
	\end{remark}
\begin{remark} Using equation \eqref{prop1} and \eqref{prop2}, we have
	\begin{align}
	e_p(v_i)\langle \tilde{ \bm{\mathcal{L}}}_{h},\phi_p \bm{e}_i \rangle_{-1,1}  &= 0 \quad \forall~p \in  (\mathcal{N}_h  \cup \mathcal{M}_h) \setminus (\mathcal{N}_h^C \cup \mathcal{M}_h^C),~i=1,2, \label{prop11}\\e_p(v_2) \langle \tilde{ \bm{\mathcal{L}}}_{h},\phi_p \bm{e}_2 \rangle_{-1,1} &=0 \quad \forall~ p \in  \mathcal{N}_h^C \cup \mathcal{M}_h^C, \label{prop22}
	\end{align}
	where $\bm{v}= (v_1,v_2) \in \bm{V}_1$.
\end{remark}
\noindent
{Next, we} build machinery which will lead us to prove Theorem \ref{thm:rel}. We define the upper and lower barrier function of solution of {the} variational inequality \eqref{eq:CVI} and their construction involves the corrector function $\bm{\zeta}=(\zeta_1,\zeta_2) \in \bm{V}$ which satisfies
	\begin{align} \label{eq:Corrector}
 \int_{\Omega} \bm{\sigma}(\bm{\zeta}): \bm{\epsilon}(\bm{v})dx= \bm{\mathcal{G}}_h( \bm{v}) \quad \forall~\bm{v} \in \bm{V}.
\end{align}
{Infact, $\bm{\zeta}$ is the Riesz representation of $\bm{\mathcal{G}}_h$ \cite{evans2010partial}.} Besides $\bm{\zeta}$, we introduce only computable quantities which accounts for the consistency errors. Let $\bm{d}$ be a $2 \times 1$ function having both components as $\|\bm{\zeta}\|_{\bm{L}^{\infty}(\Omega)}:= \max \Big\{\|\zeta_1\|_{L^{\infty}(\Omega)},\|\zeta_2\|_{L^{\infty}(\Omega)}\Big\}$. Let $\bm{b}$ and $\bm{y}$ be a $2 \times 1$ functions  having both components as $ \|(u_{h,1}-\chi)^{+}\|_{L^{\infty}(\Gamma_C)}$ and $\|(\chi-u_{h,1})^+\|_{L^{\infty}({\Lambda}^C_h)}$, respectively. Next, the upper and lower {barrier functions} of $\bm{u}$ are defined by
\begin{align}
\bm{u^{\wedge}}&=\bm{\zeta}+\bm{u}_h+\bm{d}+\bm{y}, \label{eq:upperb}\\ \bm{u_{\vee}}&=\bm{\zeta}+\bm{u}_h-\bm{d}-\bm{b}. \label{eq:lowerb}
\end{align}
In the next two lemmas, we derive the key properties corresponding to $\bm{u^{\wedge}}$ and $\bm{u_{\vee}}$.
	\begin{lemma} \label{lem:barrier}
Let $\bm{u}$ be a continuous solution satisfying \eqref{eq:CVI} and let $\bm{u^{\wedge}}$ be as defined in \eqref{eq:upperb}. Then 
	\begin{align*}
	\bm{u} \leq\bm{u^{\wedge}}.
	\end{align*}
\end{lemma}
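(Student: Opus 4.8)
The plan is to exploit the defining property of the corrector $\bm{\zeta}$ together with a comparison principle for the operator $F$ (equivalently, the bilinear form $a(\cdot,\cdot)$) and the sign properties of the contact force densities $\bm{\lambda}$ and $\bm{\bar{\lambda}}_h$ established in Lemmas \ref{sign1}--\ref{sign2} and Lemma \ref{Lemmaa}. Concretely, I would first observe that it suffices to prove $u_1 \le u^{\wedge}_1$ on $\Omega$, since for the second component the contact force densities $\lambda_2$ and $\bar{\lambda}_{h,2}$ both vanish (by \eqref{repre} and Lemma \ref{sign2}), so $\zeta_2 + u_{h,2}$ satisfies a pure (linear) elasticity problem whose solution is $u_2$ up to the consistency correction, and the addition of the nonnegative constant component of $\bm{d} + \bm{y}$ gives $u_2 \le u^{\wedge}_2$ immediately; the real content is in the first component, where the unilateral constraint lives.

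For the first component, I would set $\bm{w} := \bm{u^{\wedge}} - \bm{u} = \bm{\zeta} + \bm{u}_h - \bm{u} + \bm{d} + \bm{y}$ and test the weak form against a suitable nonnegative function. Using \eqref{eq:Corrector} with the definitions \eqref{eq:sigmadef} of $\bm{\lambda}$ and \eqref{eq:GAL} of $\bm{\mathcal{G}}_h$, one gets, for every $\bm{v} \in \bm{V}$,
\begin{align*}
a(\bm{\zeta} + \bm{u}_h - \bm{u}, \bm{v}) = a(\bm{u}-\bm{u}_h,\bm{v}) + \langle \bm{\lambda} - \bm{\bar{\lambda}}_h, \bm{v}\rangle_{-1,1} + a(\bm{u}_h - \bm{u},\bm{v}) = \langle \bm{\lambda} - \bm{\bar{\lambda}}_h, \bm{v}\rangle_{-1,1}.
\end{align*}
Hence $\bm{\zeta} + \bm{u}_h - \bm{u}$ solves an elasticity problem with right-hand side $\bm{\lambda} - \bm{\bar{\lambda}}_h$. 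The strategy is then: test with $\bm{v} = \bm{\phi}$, where $\bm{\phi} = (\phi_1, 0)$ and $\phi_1 = (u_1 - u^{\wedge}_1)^+ \ge 0$ (this $\bm{\phi}$ lies in $\bm{V}$ once one checks it vanishes on $\Gamma_D$, which holds since $u_1 = 0$ there and the added constants are nonnegative so $u_1 - u^{\wedge}_1 \le 0$ on $\Gamma_D$). On the set $\{\phi_1 > 0\} = \{u_1 > u^{\wedge}_1\}$ we have in particular $u_1 > u_{h,1} + y_1 \ge \chi$ wherever $u_{h,1} \ge \chi - y_1$; more carefully, on $\Gamma_C$ one has $u_1 \le \chi$, while $u^{\wedge}_1 = \zeta_1 + u_{h,1} + d_1 + y_1 \ge \zeta_1 + u_{h,1} + y_1$, and on $\{u_{h,1} < \chi\}$ the definition of $y_1 = \|(\chi - u_{h,1})^+\|_{L^\infty(\Lambda_h^C)}$ together with $\operatorname{supp}(\lambda_1) \subset \{u_1 = \chi\}$ (equation \eqref{eq:IUY}) is used to kill the $\langle \lambda_1, \phi_1\rangle$ contribution; on $\{u_{h,1} = \chi\}$ one uses nonnegativity of $\bar{\lambda}_{h,1}$ (Lemma \ref{sign2}). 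Thus testing gives $a(\bm{\zeta}+\bm{u}_h-\bm{u},\bm{\phi}) = \langle \lambda_1, \phi_1\rangle_{-1,1} - \langle \bar{\lambda}_{h,1}, \phi_1\rangle_{-1,1}$, and I would argue the right-hand side is $\le 0$ by combining \eqref{eq:POOI} (which gives $\langle \lambda_1, \phi_1\rangle \ge 0$ but on the support set $\{u_1 = \chi\}$ one needs the reverse — here instead use that on $\operatorname{supp}(\phi_1)$ either $u_1 < \chi$ so $\lambda_1 = 0$, or $u_1 = \chi$ forcing $u^{\wedge}_1 < \chi$, impossible once the $d_1$ term dominates $|\zeta_1|$) with $-\langle \bar{\lambda}_{h,1},\phi_1\rangle \le 0$ from Lemma \ref{sign2}. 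Then coercivity of $a(\cdot,\cdot)$ on $\bm{V}$ forces $a(\bm{\zeta}+\bm{u}_h-\bm{u}, (\phi_1,0)) \ge c\|\phi_1\|_1^2$ modulo the off-diagonal coupling $\zeta$-term, and comparing signs yields $\phi_1 \equiv 0$, i.e. $u_1 \le u^{\wedge}_1$.

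I expect the main obstacle to be the rigorous handling of the pointwise/a.e. sign bookkeeping on $\Gamma_C$: one must partition $\Gamma_C$ (or rather $\operatorname{supp}\phi_1$) into the region where $u_{h,1} < \chi$ (where $\bar{\lambda}_{h,1}$-contributions need the set $\Lambda_h^C$ and the slack $y_1$) and the region where $u_{h,1} = \chi$ (where $\lambda_1$-contributions need $\operatorname{supp}(\lambda_1) \subset \{u_1 = \chi\}$ and the fact that $d_1 \ge |\zeta_1|$ pointwise), and carefully verify that in each case the barrier definition \eqref{eq:upperb} makes $(u_1 - u^{\wedge}_1)^+$ vanish there before any delicate cancellation is even needed. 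A secondary technical point is legitimizing the test function $\bm{\phi} = ((u_1-u^{\wedge}_1)^+, 0)$: one should check $\bm{\phi} \in \bm{V}$ (membership in $\bm{H}^1$ of the positive part, and vanishing on $\Gamma_D$), and if $\bm{\lambda}$ must be paired with it, use the extended form $\tilde a$ and $\bm{\lambda}^*$ from \eqref{eq:extsigmadef} as the excerpt sets up. Given the machinery already in place (the corrector identity, Lemma \ref{sign2}, the support property \eqref{eq:IUY}, and coercivity), the argument should close cleanly once this casework is organized.
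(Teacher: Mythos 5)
Your overall architecture matches the paper's: take the positive part of the difference, use the corrector identity \eqref{eq:Corrector} to convert $a(\bm{u}-\bm{u^{\wedge}},\cdot)$ into $\langle\bm{\bar{\lambda}}_h-\bm{\lambda},\cdot\rangle_{-1,1}$, and close with coercivity plus the sign/support properties of the two multipliers and the slack constants $\bm{d},\bm{y}$. However, the sign bookkeeping in your closing step is reversed, and this is a genuine gap. With $\phi_1=(u_1-u^{\wedge}_1)^+$, the first component of $\bm{\zeta}+\bm{u}_h-\bm{u}$ equals $-(u_1-u^{\wedge}_1)$ up to additive constants, so the coercivity estimate points the other way: one has $a(\bm{u}-\bm{u}_h-\bm{\zeta},\bm{\phi})\gtrsim\|\phi_1\|_{1}^2$ (modulo the cross terms), not $a(\bm{\zeta}+\bm{u}_h-\bm{u},\bm{\phi})\ge c\|\phi_1\|_{1}^2$ as you assert. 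Consequently the inequality you must establish is $\langle\bar{\lambda}_{h,1},\phi_1\rangle_{-1,1}-\langle\lambda_1,\phi_1\rangle_{-1,1}\le 0$, the negative of the one you aim at. For the upper barrier the correct division of labour is therefore the opposite of the one you propose: the term $-\langle\lambda_1,\phi_1\rangle_{-1,1}\le 0$ is simply dropped using \eqref{eq:POOI} (no support argument for $\lambda_1$ is needed, or available, here), while the term $\langle\bar{\lambda}_{h,1},\phi_1\rangle_{-1,1}$, which Lemma \ref{sign2} only bounds from \emph{below}, must be shown to vanish: writing it as $\sum_p\langle\bm{\lambda}_h,\psi_p\bm{e}_1\rangle_h\,e_p(\phi_1)$, every summand with $\langle\bm{\lambda}_h,\psi_p\bm{e}_1\rangle_h>0$ has its contact edge inside $\Lambda_h^C$, where $u^{\wedge}_1\ge u_{h,1}+y_1\ge\chi\ge u_1$ and hence $\phi_1=0$. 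The combination you use (kill $\lambda_1$ by its support, drop $\bar{\lambda}_{h,1}$ by its sign) is the one appropriate for the \emph{lower} barrier, Lemma \ref{lem:barrier1}. Moreover, your attempted annihilation of $\langle\lambda_1,\phi_1\rangle_{-1,1}$ does not go through off $\Lambda_h^C$: at a point with $u_1=\chi$ and $u_{h,1}<\chi-y_1$ nothing prevents $u^{\wedge}_1<\chi=u_1$, so $\operatorname{supp}\lambda_1$ can meet $\{\phi_1>0\}$.

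A secondary gap is the reduction to the first component. The bound $u_2\le u^{\wedge}_2$ is not ``immediate from the nonnegative constants'': it amounts to $u_2-u_{h,2}\le\zeta_2+d_2+y_2$, which is a pointwise bound on $u_2-u_{h,2}$ of exactly the kind the lemma is meant to deliver, and the two components are coupled through the elasticity form. The paper avoids this by taking the vector-valued positive part $\bm{z}=(\bm{u}-\bm{u^{\wedge}})^{+}$, checking $\bm{z}=\bm{0}$ on $\Gamma_D$, and running the comparison for both components simultaneously.
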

\begin{proof}
		 To prove $\bm{u} \leq \bm{u^{\wedge}}$, it is equivalent to show that $\bm{z}:=(\bm{u}-\bm{u^{\wedge}})^{+}=\bm{0}$ in $\Omega$. From definition of $\bm{u^{\wedge}}$, we observe that
	\begin{align*}
	\bm{u}-\bm{u^{\wedge}} &=\bm{u}- \bm{\zeta}-\bm{u}_h-\bm{d}-\bm{y}\\
	& \leq \bm{u}-\bm{u}_h \leq \bm{0}
	\end{align*}
	on $\Gamma_D.$ Hence, $(\bm{u}- \bm{u^{\wedge}})^{+}=\bm{0}$ on $\Gamma_D$. In view of Poincar$\acute{e}$ inequality, our claim will hold true if we show that $\|\nabla \bm{z}\|_{\bm{L}^2(\Omega)}=0$. Using coercivity of $a(\cdot,\cdot)$, \eqref{eq:Corrector}, \eqref{eq:GAL},  Lemma \ref{Lemmaa}, Lemma \ref{sign2} together with equations \eqref{def} and \eqref{131}, we have
	\begin{align*} 
	\|\nabla \bm{z}\|^2_{\bm{L}^2(\Omega)}&\lesssim a(\bm{z},\bm{z}) = a(\bm{u}-\bm{u^{\wedge}},\bm{z}) = a(\bm{u}-\bm{\zeta}-\bm{u}_h,\bm{z}) \notag\\ &= a(\bm{u}, \bm{z})-a(\bm{u}_h,\bm{z}) - \bm{\mathcal{G}}_h(\bm{z})= \langle  \bm{{\bar{\lambda}}}_h- \bm{\lambda} , \bm{z} \rangle_{-1,1} \\ & \leq \langle  \bar{\lambda}_{h,1}, z_1 \rangle_{-1,1} = \sum_{p \in \mathcal{N}_h^C \cup \mathcal{M}_h^C}  \langle \bm{\mathcal{L}}_h,  \phi_p \bm{e}_1 \rangle_{-1,1}  e_p(z_1)  \\ & =\sum_{p \in \mathcal{N}_h^C \cup \mathcal{M}_h^C}  \langle \bm{\lambda}_h, \psi_p \bm{e}_1 \rangle_h e_p(z_1).
	\end{align*}
	{Now,} it is enough to show there does not exist any node $p \in \cN^C_h \cup \mathcal{M}_h^C $ such that $\langle \bm{\lambda}_h, \psi_p \bm{e}_1 \rangle_h e_p(z_1)  >0$. If $p \in \cN^{NC}_h$, then using Remark \eqref{sign}, we have $ \langle \bm{\lambda}_h, \psi_p \bm{e}_1 \rangle_h =0$. For  $p \in \cN^{FC}_h$,  assume on the contrary that $\langle \bm{\lambda}_h, \psi_p \bm{e}_1 \rangle_h e_p(z_1)  >0$ , then there exists a $x^* \in \bar{\Gamma}_{p,C} \subsetneq \Gamma_{p,C}$ such that $z_1(x^*)>0$. Then,
	\begin{align*} 
	z_1(x^*)&>0 \\ \implies u_1(x^*) &> u^{\wedge}_1(x^*) \geq u_{h,1}(x^*)+\|(\chi-u_{h,1})^+\|_{L^{\infty}(\Lambda^C_h)} \geq  \chi(x^*),
	\end{align*}
	which does not hold as $x^* \in \Gamma_C$. The proof follows on the similar lines if $p \in \cN^{SC}_h$, hence the result of lemma holds.
\end{proof}
\begin{lemma} \label{lem:barrier1}
Let $\bm{u}$ be the solution of continuous variational inequality \eqref{eq:CVI} and let $\bm{u_{\vee}}$ be as defined in \eqref{eq:lowerb}. Then, it holds that
\begin{align*}
 \bm{u_{\vee}} \leq \bm{u}.
\end{align*}
\end{lemma}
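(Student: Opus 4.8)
The plan is to mirror the proof of Lemma \ref{lem:barrier}, exploiting the near-symmetry between the upper and lower barriers. To show $\bm{u_{\vee}} \leq \bm{u}$, I would set $\bm{z}:=(\bm{u_{\vee}}-\bm{u})^{+}$ and prove $\bm{z}=\bm{0}$ in $\Omega$. First I would check the boundary: on $\Gamma_D$ we have $\bm{u}=\bm{0}$ and $\bm{u}_h=\bm{0}$, while $\bm{\zeta}\in\bm{V}$ also vanishes on $\Gamma_D$, so $\bm{u_{\vee}}-\bm{u}=-\bm{d}-\bm{b}\leq\bm{0}$ there, giving $\bm{z}=\bm{0}$ on $\Gamma_D$; then by the Poincar\'e inequality it suffices to prove $\|\nabla\bm{z}\|_{\bm{L}^2(\Omega)}=0$.

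Next I would run the energy estimate. Using coercivity of $a(\cdot,\cdot)$, the definition \eqref{eq:lowerb} of $\bm{u_{\vee}}$, the fact that $\bm{d}$ and $\bm{b}$ are constant vectors (so they are annihilated under $a(\cdot,\cdot)$ in its first slot after using $\nabla$ of a constant is zero — more precisely $a(\bm{u_{\vee}},\bm{z})=a(\bm{\zeta}+\bm{u}_h,\bm{z})$), the corrector equation \eqref{eq:Corrector}, and the definition \eqref{eq:GAL} of $\bm{\mathcal{G}}_h$, I would obtain
\begin{align*}
\|\nabla\bm{z}\|^2_{\bm{L}^2(\Omega)}&\lesssim a(\bm{z},\bm{z})=a(\bm{u_{\vee}}-\bm{u},\bm{z})=a(\bm{\zeta}+\bm{u}_h-\bm{u},\bm{z})\\
&=a(\bm{u}_h,\bm{z})-a(\bm{u},\bm{z})+\bm{\mathcal{G}}_h(\bm{z})=\langle\bm{\lambda}-\bm{\bar{\lambda}}_h,\bm{z}\rangle_{-1,1}.
\end{align*}
Here the sign of the right-hand side goes the other way compared to Lemma \ref{lem:barrier}: I would use Lemma \ref{Lemmaa} (specifically \eqref{eq:POOI}, $\langle\bm{\lambda},\bm{\phi}\rangle_{-1,1}\geq 0$ for $\bm{\phi}\geq\bm{0}$ — but I need a bound $\langle\bm{\lambda},\bm{z}\rangle_{-1,1}\leq$ something, so rather \eqref{repre} and $supp(\lambda_1)\subset\{u_1=\chi\}$ via \eqref{eq:IUY}) together with Lemma \ref{sign2} giving $\langle\bm{\bar{\lambda}}_h,\bm{z}\rangle_{-1,1}\geq 0$. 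Thus $\langle\bm{\lambda}-\bm{\bar{\lambda}}_h,\bm{z}\rangle_{-1,1}\leq\langle\bm{\lambda},\bm{z}\rangle_{-1,1}=\langle\lambda_1,z_1\rangle_{-1,1}$, and since $\lambda_1\geq 0$ is supported on $\{u_1=\chi\}$, it suffices to show $z_1\leq 0$ on $\{u_1=\chi\}$.

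Finally I would verify that pointwise contact bound. On $\{u_1=\chi\}\subseteq\Gamma_C$, using $\|\bm{\zeta}\|_{\bm{L}^\infty}$-domination by $\bm{d}$,
\[
u_{\vee,1}=\zeta_1+u_{h,1}-\|\bm{\zeta}\|_{\bm{L}^\infty(\Omega)}-\|(u_{h,1}-\chi)^{+}\|_{L^\infty(\Gamma_C)}\leq u_{h,1}-\|(u_{h,1}-\chi)^{+}\|_{L^\infty(\Gamma_C)}\leq\chi=u_1,
\]
because $u_{h,1}-\chi\leq(u_{h,1}-\chi)^{+}\leq\|(u_{h,1}-\chi)^{+}\|_{L^\infty(\Gamma_C)}$ forces $u_{h,1}-\|(u_{h,1}-\chi)^{+}\|_{L^\infty(\Gamma_C)}\leq\chi$. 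Hence $z_1=(u_{\vee,1}-u_1)^{+}=0$ on the support of $\lambda_1$, so $\langle\lambda_1,z_1\rangle_{-1,1}\leq 0$, forcing $\|\nabla\bm{z}\|_{\bm{L}^2(\Omega)}=0$ and therefore $\bm{z}=\bm{0}$, i.e. $\bm{u_{\vee}}\leq\bm{u}$. The main obstacle I anticipate is making the ``$\langle\lambda_1,z_1\rangle_{-1,1}\leq 0$'' step rigorous: $\lambda_1$ is only a functional in $V^*$, not an $L^1$ density, so I cannot literally multiply pointwise; instead I would argue that $z_1\in\bm{V}$ (as a positive part of $H^1$ functions, truncated) is nonnegative and, being supported where $u_1<\chi$ up to the contact set where it vanishes, one can write $z_1 = (\chi - u_1) + (\text{something} \leq 0)$ near contact and invoke a density/support argument analogous to the one used implicitly in Lemma \ref{lem:barrier}, or equivalently observe $-z_1 \in \bm{\mathcal{K}} - \bm{u}$ directions so that \eqref{eq:SCT} applies after a sign flip. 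This support/duality bookkeeping, rather than any hard estimate, is where care is needed.
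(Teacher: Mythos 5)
Your proposal is correct and follows essentially the same route as the paper: the same barrier test function $\bm{z}=(\bm{u_{\vee}}-\bm{u})^{+}$, the same coercivity-plus-corrector identity reducing everything to $\langle\bm{\lambda}-\bm{\bar{\lambda}}_h,\bm{z}\rangle_{-1,1}$, the same use of Lemma \ref{sign2} to drop $\bm{\bar{\lambda}}_h$, and the same disjointness of $supp(\lambda_1)$ and $supp(z_1)$ via the $\bm{d}+\bm{b}$ shift in \eqref{eq:lowerb} (your contrapositive phrasing "$u_1=\chi\Rightarrow z_1=0$" is the paper's "$z_1>0\Rightarrow u_1<\chi$"). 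The rigor concern you flag at the end is handled in the paper exactly by this support argument, so no further work is needed.
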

\begin{proof}
 The proof uses the same ideas as in Lemma \ref{lem:barrier}. Let $\bm{z}=(\bm{u_{\vee}}-\bm{u})^{+}$ and we show that $\bm{z}=\bm{0}$ in $\Omega$. First, note that $\bm{z}|_{\Gamma_D}=\bm{0}$, as we have
	\begin{align*}
	(\bm{u_{\vee}}-\bm{u})|_{\Gamma_D}&=(\bm{u}_h+(\bm{\zeta}-\bm{d})-\bm{b}-\bm{u})|_{\Gamma_D}\\
	&\leq (\bm{u}_h-\bm{u})|_{\Gamma_D} \leq \bm{0}.
	\end{align*}
	Therefore $\bm{z} \in \bm{V}$, thereby it is sufficient to prove $\|\nabla \bm{z}\|_{\bm{L}^2(\Omega)}=0$. Employing the coercivity of $a(\cdot,\cdot)$, equations \eqref{eq:Corrector},\eqref{eq:GAL} and using Lemma \ref{sign2} together with  \eqref{repre},  we find
	\begin{align*}
	\|\nabla \bm{z}\|^2_{\bm{L}^2(\Omega)}&\lesssim a(\bm{z},\bm{z}) =a(\bm{u_{\vee}}-\bm{u},\bm{z})= a(\bm{u}_h+\bm{\zeta}-\bm{u},\bm{z}) \\ &= a(\bm{u}_h-\bm{u},\bm{z})+ \bm{\mathcal{G}}_h(\bm{z})= \langle \bm{\lambda} - {\bm{\bar{\lambda}}_h} , \bm{z} \rangle_{-1,1}  \\ & \leq \langle \bm{\lambda} ,\bm{z} \rangle_{-1,1} = \langle \lambda_1 ,z_1 \rangle_{-1,1}.
	\end{align*}
	Using \eqref{eq:IUY}, it holds that $supp(\lambda_1) \subset \{u_1 =\chi\}$. We prove that, $supp(\lambda_1) ~\cap~ supp(z_1) =  \emptyset$. Let us consider
	\begin{align*} 
	z_1(x)>0 \implies u_{\vee,1}(x) &>u_1(x).
	\end{align*}
	From the definition of $ \bm{u_{\vee}}$, we have
	\begin{align*}
u_1(x) <	u_{\vee,1}(x)= u_{h,1}(x) + \zeta_1(x) - \|\bm{\zeta}\|_{\bm{L}^{\infty}(\Omega)}-\|(u_{h,1}-\chi)^{+}\|_{L^{\infty}(\Gamma_C)},
	\end{align*} which implies
	\begin{align*}
	u_{h,1}(x)-\|(u_{h,1}-\chi)^{+}\|_{L^{\infty}(\Gamma_C)} > u_1(x),
	\end{align*}
	$\implies$
	\begin{align*}
	\chi(x) > u_1(x).
	\end{align*}
	Thus, $supp(z_1) \subset \{u_1< \chi\} $. This concludes the proof.
\end{proof} 
\noindent
As a consequence of Lemma \ref{lem:barrier} and  Lemma \ref{lem:barrier1} we have the following estimate.
\begin{lemma} \label{jhbd}
	Let $\bm{u}$ and $\bm{u}_h$ be the solutions of (\ref{eq:CVI}) and (\ref{eq:DVI}), respectively. There holds,
	\begin{align*}
	\|\bm{u}-\bm{u}_h\|_{\bm{L}^{\infty}(\Omega)} \leq 2\|\bm{\zeta}\|_{\bm{L}^{\infty}(\Omega)} + \|(u_{h,1}-\chi)^{+}\|_{L^{\infty}(\Gamma_C)} + \|(\chi-u_{h,1})^+\|_{L^{\infty}(\Lambda^C_h)} \notag.
	\end{align*}
\end{lemma}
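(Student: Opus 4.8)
The plan is to combine the two barrier lemmas (Lemma \ref{lem:barrier} and Lemma \ref{lem:barrier1}) pointwise. By Lemma \ref{lem:barrier1} we have $\bm{u_{\vee}} \leq \bm{u}$ and by Lemma \ref{lem:barrier} we have $\bm{u} \leq \bm{u^{\wedge}}$ on $\Omega$, hence $\bm{u_{\vee}} \leq \bm{u} \leq \bm{u^{\wedge}}$ everywhere. Subtracting $\bm{u}_h$ from the chain of inequalities and using the definitions \eqref{eq:upperb} and \eqref{eq:lowerb} of the barrier functions, we obtain
\begin{align*}
\bm{\zeta} - \bm{d} - \bm{b} \;\leq\; \bm{u} - \bm{u}_h \;\leq\; \bm{\zeta} + \bm{d} + \bm{y}
\end{align*}
componentwise on $\Omega$.

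From this two-sided bound I would pass to the $\bm{L}^{\infty}$ norm. Since $\bm{d}$ has both components equal to $\|\bm{\zeta}\|_{\bm{L}^{\infty}(\Omega)}$, $\bm{b}$ has both components equal to $\|(u_{h,1}-\chi)^{+}\|_{L^{\infty}(\Gamma_C)}$, and $\bm{y}$ has both components equal to $\|(\chi-u_{h,1})^{+}\|_{L^{\infty}(\Lambda^C_h)}$, each of these is a (nonnegative) constant vector. Therefore, for $i=1,2$ and a.e.\ $x \in \Omega$,
\begin{align*}
|u_i(x) - u_{h,i}(x)| \leq \|\bm{\zeta}\|_{\bm{L}^{\infty}(\Omega)} + \|\bm{\zeta}\|_{\bm{L}^{\infty}(\Omega)} + \max\big\{\|(u_{h,1}-\chi)^{+}\|_{L^{\infty}(\Gamma_C)},\,\|(\chi-u_{h,1})^{+}\|_{L^{\infty}(\Lambda^C_h)}\big\},
\end{align*}
since at each point $u_i - u_{h,i}$ lies between $\zeta_i - \|\bm{\zeta}\|_\infty - \|(u_{h,1}-\chi)^+\|$ and $\zeta_i + \|\bm{\zeta}\|_\infty + \|(\chi-u_{h,1})^+\|$, and $|\zeta_i| \le \|\bm{\zeta}\|_{\bm{L}^\infty(\Omega)}$. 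Bounding the $\max$ by the sum of the two nonnegative quantities and taking the supremum over $x$ and the maximum over $i$ gives exactly the claimed estimate
\begin{align*}
\|\bm{u}-\bm{u}_h\|_{\bm{L}^{\infty}(\Omega)} \leq 2\|\bm{\zeta}\|_{\bm{L}^{\infty}(\Omega)} + \|(u_{h,1}-\chi)^{+}\|_{L^{\infty}(\Gamma_C)} + \|(\chi-u_{h,1})^+\|_{L^{\infty}(\Lambda^C_h)}.
\end{align*}

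There is no real obstacle here: the lemma is a bookkeeping corollary of the two barrier lemmas, and the only point requiring a moment's care is the asymmetry between the upper barrier (which carries the term $\bm{y}$ built from $(\chi-u_{h,1})^+$) and the lower barrier (which carries $\bm{b}$ built from $(u_{h,1}-\chi)^+$); one must keep both consistency terms and not accidentally drop one, which is why the final bound contains both. I would present the argument in the two short displays above and conclude.
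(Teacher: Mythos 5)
Your proof is correct and is exactly the argument the paper intends: the paper presents this lemma as an immediate consequence of Lemmas \ref{lem:barrier} and \ref{lem:barrier1} without writing out the details, and your pointwise sandwich $\bm{u_{\vee}} \leq \bm{u} \leq \bm{u^{\wedge}}$ followed by substituting the definitions of $\bm{d}$, $\bm{b}$, $\bm{y}$ is precisely that bookkeeping. No issues.
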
 
\noindent
From Lemma \ref{jhbd}, it is evident to bound the term $\|\bm{\zeta}\|_{\bm{L}^{\infty}(\Omega)}$ in order to prove the reliability estimate. Next, we provide the estimate on the maximum norm of $\bm{\zeta}$ in terms of the local error estimators $\eta_i  \quad \forall i=1,2,\cdots,5$. The key ingredient for the supremum norm a posteriori error analysis is the bounds on the Green's matrix for the divergence type operators, for which we refer \cite{dolzmann1995estimates}. Our approach is different than that followed in  the articles \cite{nochetto2003pointwise,nochetto2005fully,nochetto2006pointwise}, as they used the regularized Green's function for the laplacian operator. We follow the technique shown by Demlow \cite{demlow2012pointwise} which varies substantially in several technical details from the previous works \cite{nochetto2003pointwise,nochetto2005fully}. Finally, we state the next lemma which provides the bound on $\|\bm{\zeta}\|_{\bm{L}^{\infty}(\Omega)}$ and discuss the proof in brevity.
\begin{lemma} \label{bound1}
It holds that
	\begin{align} \label{bound2}
	\|\bm{\zeta}\|_{\bm{L}^{\infty}(\Omega)} \lesssim {l_h} \Psi,
	\end{align}
	where $\Psi$ is defined in Theorem \ref{thm:rel}.
\end{lemma}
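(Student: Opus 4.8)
\textbf{Proof plan for Lemma \ref{bound1}.}

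The plan is to realize $\bm{\zeta}$, the Riesz representative of $\bm{\mathcal{G}}_h$ from \eqref{eq:Corrector}, pointwise via the Green's matrix of the divergence-type operator $F$ introduced in \Cref{regularity}. For a fixed point $z\in\Omega$ and a fixed component index, one writes $\zeta_k(z)$ as a duality pairing of $\bm{\mathcal{G}}_h$ (equivalently $\widetilde{\bm{\mathcal{G}}}_h$, using the extended bilinear form $\tilde a$) against the corresponding column $\bm{G}^z=(G_{jk}(\cdot,z))_j$ of the Green's matrix: since $-\sum_j F_{ij}G_{jk}(\cdot,z)=\delta_{ik}\delta_z$ with the homogeneous Neumann-type conditions on $\Gamma_N\cup\Gamma_C$ and zero Dirichlet data on $\Gamma_D$, integration by parts gives $\zeta_k(z)=\tilde a(\bm\zeta,\bm{G}^z)=\widetilde{\bm{\mathcal{G}}}_h(\bm{G}^z)$. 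This is where the space $\bm U$ and the norm $\|\cdot\|_{-2,\infty,\Omega}$ enter: $\bm{G}^z$ satisfies exactly the boundary conditions defining $\bm U$, so the pairings $\langle\bm\lambda^*-\bar{\bm\lambda}_h,\bm{G}^z\rangle$ are controlled; however $\bm{G}^z\notin\bm W^{2,1}$, so one must work with a suitable regularization/truncation of $\bm{G}^z$ (or, following Demlow \cite{demlow2012pointwise}, a mesh-dependent weighted norm) and pass to the limit.

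The second step is to decompose $\widetilde{\bm{\mathcal{G}}}_h(\bm{G}^z)$ into the computable residual pieces. Using the definition \eqref{eq:extGAL}, the representation \eqref{residual} of $\langle\bm{\mathcal{L}}_h,\cdot\rangle$ in terms of the element residual $\bm s(\bm u_h)$, the interior jumps $\bm{\cJ}_e^I(\bm u_h)$, the Neumann jumps $\bm{\cJ}_e^N(\bm u_h)$ and the contact-boundary terms $\bm\sigma(\bm u_h)\bm n$, together with the definition \eqref{def} of $\bar{\bm\lambda}_h$ through the constants $e_p(\cdot)$, one subtracts and adds the local averages $e_p(G_{jk}^z)$ nodewise. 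Because $\sum_p\phi_p\equiv1$ and because of the Galerkin-type orthogonalities \eqref{prop1}--\eqref{prop2} (equivalently \eqref{prop11}--\eqref{prop22}), only the \emph{oscillation} $G_{jk}^z-e_p(G_{jk}^z)$ is tested against each residual contribution. One then applies the $W^{1,1}$ approximation bounds of \Cref{approxprop} and \Cref{approxprop1} (and its Remark for Dirichlet nodes) locally on each patch $\Omega_p$, Hölder's inequality with the $L^\infty$ estimators $\eta_{1,p},\dots,\eta_{5,p}$ pulled out, and the trace inequality \Cref{Lemmmma} for the edge terms. This reduces everything to $\Psi=\sum_{i=1}^5\eta_i$ times a sum of the form $\sum_{p}\big(\|\nabla\bm{G}^z\|_{\bm L^1(\Omega_p)}+ \text{lower order}\big)$ over patches meeting the relevant boundary parts, with $h_p$ powers absorbed into the $\eta_{i,p}$.

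The third step is to bound that weighted sum of local $W^{1,1}$-norms of the Green's matrix by $l_h=1+|\log h_{\min}|^2$. Away from $z$ one uses the pointwise bound $|\bm G(x,z)|\lesssim\log(C/|x-z|)$ from \eqref{eq:BGRR} together with the standard companion gradient estimate $|\nabla_x\bm G(x,z)|\lesssim|x-z|^{-1}$ (derivable from the same Green's-matrix theory of \cite{dolzmann1995estimates}); summing $\int_{\Omega_p}|x-z|^{-1}\,dx$ over the $O(|\log h_{\min}|)$ dyadic annuli around $z$, and treating the innermost annulus with the global bound \eqref{bound4} $\|\bm{G}^z\|_{\bm W^{1,s}_0(\Omega)}\le C$ for some $1<s<2$, produces one logarithmic factor; the second factor comes from the weight needed to make the truncated/regularized Green's function admissible in the $\|\cdot\|_{-2,\infty,\Omega}$ estimate, exactly as in \cite{demlow2012pointwise}. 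I expect the main obstacle to be precisely this third step: making the limiting argument rigorous despite $\bm{G}^z\notin\bm W^{2,1}$, i.e. choosing the right regularized Green's function and weighted norm so that (a) the extra consistency terms $\langle\bm\lambda^*-\bm\lambda,\cdot\rangle$ stay under control, (b) the $e_p$-oscillation estimates still apply, and (c) the annular sums collapse to exactly the power $|\log h_{\min}|^2$ rather than something larger; the bookkeeping for the contact-boundary terms, where $\bar{\bm\lambda}_h$ only cancels part of $\bm\sigma(\bm u_h)\bm n$ (cf. \eqref{spd} and \Cref{lem:disL}), is the delicate sub-case.
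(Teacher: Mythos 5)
Your first two steps coincide with the paper's argument: pick $z_0$ with $|\zeta^l(z_0)|=\|\bm{\zeta}\|_{\bm{L}^{\infty}(\Omega)}$, write $\zeta^l(z_0)=\tilde{\bm{\mathcal{G}}}_h(\bm{G}_l^{z_0})$, expand via $\sum_p\phi_p\equiv 1$, use \eqref{prop11}--\eqref{prop22} and the definition \eqref{def} of $\bar{\bm{\lambda}}_h$ to insert the averages $e_p(G^{z_0}_{l,k})$, and then apply H\"older, Lemma \ref{Lemmmma} and Lemma \ref{approxprop} to reach $|\tilde{\bm{\mathcal{G}}}_h(\bm{G}_l^{z_0})|\lesssim\Psi\sum_{T\in\cT_h}h_T^{-1}|G^{z_0}_{l,k}|_{1,1,T}$. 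Two corrections, though. First, no regularization or truncation of $\bm{G}^{z_0}$ is needed and $\|\cdot\|_{-2,\infty,\Omega}$ plays no role in this lemma: the extended form $\tilde a$ is defined on $\bm{V}_0\times\bm{V}_1$ with $\bm{V}_1=\bm{V}+\bm{W}^{1,q}(\Omega)$, $q<2$, precisely so that $\bm{G}^{z_0}\in\bm{W}^{1,s}(\Omega)$, $s<2$, is already an admissible test function. Second, the $h_p$ powers are \emph{not} fully absorbed into the $\eta_{i,p}$: after the approximation estimates one is left with exactly one uncancelled factor $h_T^{-1}$ multiplying $|G^{z_0}_{l,k}|_{W^{1,1}(T)}$, and that negative power is the entire difficulty of the lemma; your unweighted sum $\sum_p\|\nabla\bm{G}^{z}\|_{\bm L^1(\Omega_p)}$ would be $O(1)$ by \eqref{bound4} and would need no logarithm at all, which signals that the weight has been dropped.

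Your third step, as described, would fail. A pointwise gradient bound $|\nabla_x\bm G(x,z_0)|\lesssim|x-z_0|^{-1}$ gives $|G^{z_0}_{l,k}|_{W^{1,1}(T)}\lesssim h_T^2/d$ for an element at distance $d$ from $z_0$, hence $h_T^{-1}|G^{z_0}_{l,k}|_{W^{1,1}(T)}\lesssim h_T/d$; summing over the roughly $(d/h_T)^2$ elements in a dyadic annulus of radius $d$ yields $\sim d/h_T$ per annulus and $O(h_{\min}^{-1})$ overall, not $|\log h_{\min}|^2$. Your attribution of the second logarithm to a ``regularization weight'' is likewise not where it comes from. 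The paper's mechanism is different: it splits $\cT_h$ into a patch $\Theta_0\subset B_0$ around $z_0$ and its complement. On $B_0$ it uses H\"older with $q=4/3$ together with \eqref{bound4} on the innermost ball $B_{h^2_{z_0}}(z_0)$, and on the dyadic annuli of $B_0\setminus B_{h^2_{z_0}}(z_0)$ the Caccioppoli--Leray inequality to trade $\|\nabla G^{z_0}_{l,k}\|_{L^2(\Omega_i)}$ for $p_i^{-1}\|G^{z_0}_{l,k}\|_{L^2(\Omega_{i'})}$ followed by the logarithmic bound \eqref{eq:BGRR}; this gives $1+|\log h_{\min}|$. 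Away from $\Theta_0$ it first converts $h_T^{-1}|G^{z_0}_{l,k}|_{W^{1,1}(T)}$ into $|G^{z_0}_{l,k}|_{W^{1,2}(T)}$ by Cauchy--Schwarz (using $|T|^{1/2}\sim h_T$), and then again applies Caccioppoli--Leray plus \eqref{eq:BGRR} on each of the $\mathcal J\lesssim|\log h_{z_0}|$ annuli, each contributing $\lesssim|\log h_{\min}|$, whence the product $|\log h_{\min}|^2$. Without some such device converting weighted gradient integrals into function values, the annular sums in your plan do not close to a logarithmic bound.
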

\begin{proof}
	Let $l \in \{1,2\}$ and $z_0 \in \Omega \setminus \partial \Omega $ {be} such that $	|\zeta^l(z_0)|=\|\bm{\zeta}\|_{\bm{L}^{\infty}(\Omega)} $. Let ${\bm{G}_l^{z_0}}:= l^{th}$ column of the Green's matrix ${\bm{G}^{z_0}}$ defined in \eqref{eq:GR}. In the view of \eqref{eq:Corrector} and \eqref{eq:GR}, the following holds
	\begin{align} \label{eq:LOO}
	\zeta^l(z_0)=	\int_{\Omega} a^{\alpha\beta}_{ij} D_{\beta} G_{jl}(\cdot, z_0)D_{\alpha} \zeta^l~dx &= \tilde{\bm{\mathcal{G}}_h}( \bm{G}_l^{z_0}).
	\end{align}
 To prove the estimate \eqref{bound2}, it is enough to bound the term $\tilde{\bm{\mathcal{G}}_h}( \bm{G}_l^{z_0})$. From \eqref{eq:extGAL}, we {have}
 \begin{align}
 	\tilde{\bm{\mathcal{G}}_h}(\bm{G}_l^{z_0})&= \tilde{a}(\bm{u}-\bm{u}_h,\bm{G}_l^{z_0}) + \langle \bm{\lambda}^*-\bar{\bm{\lambda}}_h, \bm{G}_l^{z_0} \rangle_{-1,1} \hspace{1cm} (\text{using \eqref{4.1})} \nonumber \\ &= L(\bm{G}_l^{z_0}) - \tilde{a}(\bm{u}_h, \bm{G}_l^{z_0}) - \langle \bar{\bm{\lambda}}_h, \bm{G}_l^{z_0} \rangle_{-1,1} \hspace{1cm} (\text{using \eqref{eq:extsigmadef})}  \nonumber  \\  & = \langle \tilde{ \bm{\mathcal{L}}}_{h}, \bm{G}_l^{z_0}\rangle_{-1,1} - \langle \bar{\bm{\lambda}}_h,\bm{G}_l^{z_0} \rangle_{-1,1} \hspace{1.7cm} (\text{using \eqref{extres})} 
 \end{align}
 Exploiting the property of nodal basis function $\underset{p \in \mathcal{N}_h \cup \mathcal{M}_h}{\sum}\phi_p = 1 $,  we find
 \begin{align}
\tilde{\bm{\mathcal{G}}_h}(\bm{G}_l^{z_0}) &= \sum_{k=1}^{2}  \sum_{p \in \mathcal{N}_h \cup \mathcal{M}_h} \langle \tilde{{\mathcal{L}}}_{h,k}, G_{l,k}^{z_0} \phi_p \rangle_{-1,1} -  \sum_{p \in \mathcal{N}_h^C \cup \mathcal{M}_h^C} \langle \tilde{\lambda}_{h,1}, G_{l,1}^{z_0}  \phi_p \rangle_{-1,1}  \hspace{0.7cm} (\text{using Lemma \ref{sign2})}\nonumber  \\ 
 	& = \sum_{k=1}^{2}  \sum_{p \in (\mathcal{N}_h \cup \mathcal{M}_h) \setminus (\mathcal{N}_h^C \cup  \mathcal{M}_h^C)   }  \langle \tilde{{\mathcal{L}}}_{h,k}, G_{l,k}^{z_0} \phi_p \rangle_{-1,1} + \sum_{p \in \mathcal{N}_h^C \cup \mathcal{M}_h^C}  \langle \tilde{{\mathcal{L}}}_{h,2}, G_{l,2}^{z_0} \phi_p \rangle_{-1,1}  \nonumber  \\  & \hspace{0.3cm}+ \sum_{p \in \mathcal{N}_h^C \cup \mathcal{M}_h^C}  \langle \tilde{{\mathcal{L}}}_{h,1}, G_{l,1}^{z_0} \phi_p \rangle_{-1,1}-  \sum_{p \in \mathcal{N}_h^C \cup \mathcal{M}_h^C} \langle \tilde{\lambda}_{h,1}, G_{l,1}^{z_0}  \phi_p \rangle_{-1,1}. \label{4.22}
\end{align}
{
We follow the next step by subtracting \eqref{prop11} and \eqref{prop22} from equation \eqref{4.22} to find
\begin{align*}
\tilde{\bm{\mathcal{G}}_h}(\bm{G}_l^{z_0})& = \sum_{k=1}^{2}  \sum_{p \in (\mathcal{N}_h \cup \mathcal{M}_h) \setminus (\mathcal{N}_h^C \cup  \mathcal{M}_h^C)   }  \langle \tilde{{\mathcal{L}}}_{h,k}, (G_{l,k}^{z_0}-e_p(G_{l,k}^{z_0})) \phi_p \rangle_{-1,1} \nonumber  \\  & \hspace{0.4cm} + \sum_{p \in \mathcal{N}_h^C \cup \mathcal{M}_h^C}  \langle \tilde{{\mathcal{L}}}_{h,2}, (G_{l,2}^{z_0}-e_p(G_{l,2}^{z_0})) \phi_p \rangle_{-1,1}  \nonumber  + \sum_{p \in \mathcal{N}_h^C \cup \mathcal{M}_h^C}  \langle \tilde{{\mathcal{L}}}_{h,1}, (G_{l,1}^{z_0}-e_p(G_{l,1}^{z_0})) \phi_p \rangle_{-1,1}. \nonumber 
\end{align*}
Employing \eqref{rel10} and the representation \eqref{residual}, we have
\begin{align*}
\tilde{\bm{\mathcal{G}}_h}(\bm{G}_l^{z_0}) &= \sum_{k=1}^{2}\sum_{p \in \mathcal{N}_h \cup \mathcal{M}_h}\bigg( \int_{\Omega_p} s_i(\bm{u}_h)((G^{z_0}_{l,k}-e_p(G^{z_0}_{l,k})))\phi_p~dx+ \int_{\Gamma_{p,I}} \cJ^I_{i,e}(\bm{u}_h)(G^{z_0}_{l,k}-e_p(G^{z_0}_{l,k}))\phi_p~ds \bigg) \notag\\
	& \hspace{0.4cm} + \sum_{k=1}^{2} \sum_{p \in \cN_h^{\bar{N}} \cup \cM_h^{\bar{N}}} \int_{\Gamma_{p,N}} \cJ^N_{i,e}(\bm{u}_h) (G^{z_0}_{l,k}-e_p(G^{z_0}_{l,k}))\phi_p~ds \nonumber \\ & \hspace{0.4cm} -\sum_{p \in \mathcal{N}_h^C \cup \mathcal{M}_h^C} \int_{\Gamma_{p,C}} \hat{\sigma_2} (\bm{u}_h) (G^{z_0}_{l,2}-e_p(G^{z_0}_{l,2}))\phi_p~ds
	 \notag \\& \hspace{0.4cm}
 - \sum_{p \in \mathcal{N}_h^C \cup \mathcal{M}_h^C} \int_{\Gamma_{p,C}} \hat{\sigma_1} (\bm{u}_h) (G^{z_0}_{l,1}-e_p(G^{z_0}_{l,1})) \phi_p~ds. \notag 
 \end{align*}
 The H\"older's inequality then yields the following bound
 \begin{align}
\tilde{\bm{\mathcal{G}}_h}(\bm{G}_l^{z_0})&\lesssim \sum_{k=1}^{2} \sum_{p \in \mathcal{N}_h \cup \mathcal{M}_h} \Big(h^2_p\|s_i(\bm{u}_h)\|_{L^\infty(\Omega_p)} h^{-2}_p\|G^{z_0}_{l,k}-e_p(G^{z_0}_{l,k})\|_{L^1(\Omega_p)} \Big) \notag\\
	& \hspace{0.4cm}+ \sum_{k=1}^{2} \sum_{p \in \mathcal{N}_h \cup \mathcal{M}_h} \Big( h_p\|\cJ^I_{i,e}(\bm{u}_h)\|_{L^\infty(\Gamma_{p,I})}h^{-1}_p\|G^{z_0}_{l,k}-e_p(G^{z_0}_{l,k})\|_{L^1(\Gamma_{p,I})}\Big) \notag \\
	& \hspace{0.4cm}+ \sum_{k=1}^{2} \sum_{p \in \cN_h^{\bar{N}} \cup \cM_h^{\bar{N}}} \Big(h_p\|\cJ^N_{i,e}(\bm{u}_h)\|_{L^\infty(\Gamma_{p,N})}h^{-1}_p\|G^{z_0}_{l,k}-e_p(G^{z_0}_{l,k})\|_{L^1(\Gamma_{p,N})} \Big) \notag \\
	& \hspace{0.4cm} + \sum_{p \in \cN^C_h \cup  \cM^C_h} \Big(h_p\|\hat{\sigma_2} (\bm{u}_h)\|_{L^\infty(\Gamma_{p,C})} h^{-1}_p\|G^{z_0}_{l,2}-e_p(G^{z_0}_{l,2})\|_{L^1(\Gamma_{p,C})}  \Big) \notag \\
	& \hspace{0.4cm}+ \sum_{p\in \cN^C_h \cup \cM^C_h} \Big(h_p\|\hat{\sigma_1} (\bm{u}_h)\|_{L^\infty(\Gamma_{p,C})} h^{-1}_p\|G^{z_0}_{l,1}-e_p(G^{z_0}_{l,1})\|_{L^1(\Gamma_{p,C})}  \Big) . 
\end{align}
}
Using Lemma \ref{Lemmmma} and Lemma \ref{approxprop}, we observe
\begin{align} \label{DFA}
\big|\tilde{\bm{\mathcal{G}}_h}( \bm{G}_l^{z_0})\big|  & \lesssim \Psi \sum_{k=1}^{2} \Big( \sum_{p \in \cN_h \cup \cM_h} \Big( h^{-2}_p \|(G^{z_0}_{l,k}-e_p(G^{z_0}_{l,k}))\|_{L^1(\Omega_p)} \notag\\  & \hspace{0.7cm}+ h^{-1}_p \|\nabla((G^{z_0}_{l,k}-e_p(G^{x_0}_{l,k})))\|_{L^1(\Omega_p)} \Big) \Big) \notag \\ & \lesssim \Psi \bigg(\sum_{k=1}^{2} \sum_{T \in \cT_h} h^{-1}_T |G^{z_0}_{l,k} |_{1,1,T}\bigg).
\end{align} 
It suffices to bound the term on the right hand side in \eqref{DFA} to prove the estimate \eqref{bound2}. Let $T^* \in \cT_h$ be such that $z_0 \in T^*$ and let $\Theta_0$ be a patch around the element $T^*$, i.e., set of all element touching $T^*$. In our estimates, we introduce the dyadic decomposition of the finite element triangulation $\cT_h = (\cT_h \setminus \Theta_0) \cup (\cT_h \cap \Theta_0)$. This decomposition will help us to use the regularity estimates mentioned in Lemma \ref{regularity}. Due to the assumption of shape regularity, there exist constants $a$ and $b$ with $a > b\geq 0$ such that
	\begin{align*}
\Theta_0 &\subset B_0:=B_{a{h}_{z_0}}(z_0) \\ (\cT_h \setminus \Theta_0) &\subset (\cT_h \setminus B_1):= {(\cT_h \setminus B_{bh_{{z}_0}}(z_0))},
\end{align*} 
where $B_x(y)$ is the ball with radius $y$ and center $x$ and $h_x: \Omega \rightarrow \mathbb{R}$ is defined by $h_x:=\text{diam}~T~ \text{if}~ x \in T.$ Next, we use the notations defined in the last paragraph to bound the right hand side term in \eqref{DFA}. Let us fix some $j \in \{1,2\}$, then,
\begin{align} \label{EE}
\sum_{T \in \cT_h} h^{-1}_T |G^{z_0}_{l,k}|_{1,1,T} & = \sum_{T \in \cT_h \cap \Theta_0}  h^{-1}_T|G^{z_0}_{l,k}|_{W^{1,1}(T)} + \sum_{T \in \cT_h \setminus \Theta_0}  h^{-1}_T|G^{z_0}_{l,k}|_{W^{1,1}(T)} \notag \\ &  \lesssim h^{-1}_{z_0}|G^{z_0}_{l,k}|_{W^{1,1}(B_0)} + {\sum_{T \in \cT_h \setminus \Theta_0}  h^{-1}_T|G^{z_0}_{l,k}|_{W^{1,1}(T)}}.
\end{align}
\par
\noindent
We try to bound the term $h^{-1}_{z_0}|G^{z_0}_{l,k}|_{W^{1,1}(B_0)} $ of \eqref{EE} and skip the proof for the second term. Using the H\"older's inequality for $q = \frac{4}{3} < 2$ and equation \eqref{bound4}, we have 
\begin{align}
h^{-1}_{z_0}|G^{z_0}_{l,k}|_{W^{1,1}(B_0)} & \lesssim h^{-1}_{z_0} \Big( |B_{h^2_{z_0}}(z_0)|^{1-\frac{1}{q}}\|\nabla G^{z_0}_{l,k}\|_{L^q(B_{h^2_{z_0}}(z_0) \cap B_0)} + |G^{z_0}_{l,k}|_{W^{1,1}( B_0 \setminus B_{h^2_{z_0}}(z_0)) } \Big) \notag \\ & \lesssim 1 + h^{-1}_{z_0} |G^{z_0}_{l,k}|_{W^{1,1}( B_0 \setminus B_{h^2_{z_0}}(z_0)) }. \label{5.20}
\end{align}
Next, we define $ \Omega_i:= \{ z \in \Omega : p_i < |z-z_0|\leq p_{i+1}\}$ where  $p_i = 2^{i-1}h^2_{z_0}$, $i=0,1,....~$. We then notice that $B_0 \setminus B_{h^2_{z_0}}(z_0)= \bigcup\limits_{i=1}^{\mathcal{J}} \Omega_i$ for some $\cJ \leq C \text{ln}(\frac{1}{h_{z_0}})$ by an annular dyadic decomposition of $B_0 \setminus B_{h^2_{z_0}}(z_0)$. Also assume $ \Omega_{i'}= \Omega_{i-1} \cup \Omega_i \cup \Omega_{i+1} $. Using equation \eqref{eq:BGRR}, Cacciopoli-Leray inequality \cite{ambrosio2015lecture} and H\"older's inequality, we have
{
\begin{align*}
|G^{z_0}_{l,k}|_{W^{1,1}(B_0 \setminus B_{h^2_{z_0}}(z_0))} & \lesssim \sum_{i=1}^{\cJ} p_i \|\nabla G^{z_0}_{l,k}\|_{L^2(\Omega_i)} \lesssim \sum_{i=1}^{\cJ} \| G^{z_0}_{l,k}\|_{L^2(\Omega_{i'})} \\ & \lesssim \sum_{i=0}^{\cJ} {p_i}\| G^{z_0}_{l,k}\|_{L^{\infty}(\Omega_{i})} \lesssim \sum_{i=0}^{\cJ} {p_i} |\text{log}(p_i)| \\ 
& \lesssim h_{z_0} |\text{log} \frac{1}{h_{z_0}}| 
\end{align*}}
\par
\noindent
Therefore, using equation \eqref{5.20}, we have
\begin{align} \label{pp}
h^{-1}_{z_0}|G^{z_0}_{l,k}|_{W^{1,1}(B_0)} \lesssim {1+|\text{log}(h_{min})|}.
\end{align}
{
 	Next, we deal with the second term on the right hand side of \eqref{EE}. Let us introduce the annular decomposition of $\cT_h \setminus B_1$. We define $ \Omega_i:= \{ z \in \Omega : p_i < |z-z_0|< p_{i+1}\}$ where  $p_i = 2^{i-1}bh_{z_0}$, $i=0,1,....~$. Let $\mathcal{J}$ be such that $\cT_h \setminus \Theta_0 \subseteq \bigcup\limits_{i=1}^{\mathcal{J}} \Omega_{i}$ which yields $J \lesssim \text{log}(\frac{1}{h_{z_0}})$ (see Lemma 2.1, \cite{kashiwabara2020pointwise}). Also assume $ \Omega_{i'}= \Omega_{i-1} \cup \Omega_i \cup \Omega_{i+1} $. Using Cauchy Schwartz inequality, \eqref{eq:BGRR}, Cacciopoli-Leray inequality \cite{ambrosio2015lecture} and H\"older's inequality, we have
 	\begin{align} \label{ppp}
 	\sum_{T \in \cT_h \setminus \Theta_0}  h^{-1}_T|G^{z_0}_{l,k}|_{W^{1,1}(T)} 
 	& \lesssim \sum_{T \in \cT_h \setminus \Theta_0}   |G^{z_0}_{l,k}|_{W^{1,2}(T)} 
 	\lesssim \sum_{i=1}^{\mathcal{J}}   |G^{z_0}_{l,k}|_{W^{1,2}(\Omega_{i})} \notag  \\ 
 	&\lesssim \sum_{i=1}^{\cJ}   p_i^{-1} |G^{z_0}_{l,k}|_{L^{2}(\Omega_{i'})} 
 	 \lesssim \sum_{i=0}^{\cJ}  |G^{z_0}_{l,k}|_{L^{\infty}(\Omega_{i})}  \notag \\ 
 	& \lesssim \cJ | \text{log}(\frac{1}{h_{z_0}})| 
 	\lesssim |\text{log}(h_{min})|^2.
 	\end{align}  
 	Using \eqref{EE}, \eqref{pp} and \eqref{ppp}, we have 
 	\begin{align}\label{eq:Ghest}
\sum_{T \in \cT_h} h^{-1}_T |G^{z_0}_{l,k}|_{1,1,T} \lesssim 1+|\text{log}(h_{min})|^2.
 	\end{align}}
 \par
\noindent
Hence, we obtain the desired result of this lemma using \eqref{DFA}, \eqref{eq:LOO} and \eqref{eq:Ghest}.
\end{proof}
\noindent
Next, we derive the following bound on the Galerkin functional $\bm{\mathcal{G}}_h$.
\begin{lemma} \label{bound}
	It holds that
	\begin{align} \label{bound3}
	\|\bm{\mathcal{G}}_h\|_{-2,\infty,\Omega} \lesssim \Psi.
	\end{align}
	
\end{lemma}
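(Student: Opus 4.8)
The plan is to verify the supremum defining $\|\cdot\|_{-2,\infty,\Omega}$ in \eqref{norm}: it suffices to show $\bm{\mathcal{G}}_h(\bm{v})\lesssim\Psi$ for an arbitrary fixed $\bm{v}\in\bm{V}\cap\bm{U}$ with $\|\bm{v}\|_{\bm{W}^{2,1}(\Omega)}\le 1$, since $\bm{\mathcal{G}}_h(\bm{v})=\tilde{\bm{\mathcal{G}}}_h(\bm{v})$ on $\bm{V}$. Following the first lines of the proof of Lemma \ref{bound1} (i.e. \eqref{eq:extsigmadef}, \eqref{eq:extGAL}, \eqref{extres}) I would first rewrite $\bm{\mathcal{G}}_h(\bm{v})=\langle\tilde{\bm{\mathcal{L}}}_h,\bm{v}\rangle_{-1,1}-\langle\bar{\bm{\lambda}}_h,\bm{v}\rangle_{-1,1}$. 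A quick but lossy route is to integrate by parts once onto $\bm{v}$ (legitimate because $\bm{v}\in\bm{U}$ and $\bm{\zeta}\in\bm{V}$ make all boundary contributions vanish), which gives $\bm{\mathcal{G}}_h(\bm{v})=a(\bm{\zeta},\bm{v})=-\int_\Omega\bm{\zeta}\cdot\mathrm{div}\,\bm{\sigma}(\bm{v})\,dx\le\|\bm{\zeta}\|_{\bm{L}^{\infty}(\Omega)}\,\|\mathrm{div}\,\bm{\sigma}(\bm{v})\|_{\bm{L}^{1}(\Omega)}\lesssim l_h\Psi$ by Lemma \ref{bound1}; this only yields \eqref{bound3} up to the logarithmic factor $l_h$, so the substance of the proof is to remove that factor by exploiting the extra regularity of the admissible test functions.

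To this end I would use that $\bm{W}^{2,1}(\Omega)\hookrightarrow\bm{C}(\bar\Omega)$ in two dimensions, so the quadratic Lagrange interpolant $I_h\bm{v}\in\bm{V}_h$ is well defined, and split $\bm{v}=(\bm{v}-I_h\bm{v})+I_h\bm{v}$, hence $\bm{\mathcal{G}}_h(\bm{v})=\langle\tilde{\bm{\mathcal{L}}}_h,\bm{v}-I_h\bm{v}\rangle_{-1,1}+\big(\langle\tilde{\bm{\mathcal{L}}}_h,I_h\bm{v}\rangle_{-1,1}-\langle\bar{\bm{\lambda}}_h,\bm{v}\rangle_{-1,1}\big)$. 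For the first term I would reproduce the elementwise integration-by-parts representation behind \eqref{residual} (now with $\phi_p(\bm{v}-I_h\bm{v})$ in place of $v_{h,i}(p)\phi_p\bm{e}_i$) and then run the Hölder estimate of \eqref{DFA} essentially verbatim, except that I use the \emph{second-order} interpolation bounds $\|\bm{v}-I_h\bm{v}\|_{\bm{L}^{1}(T)}\lesssim h_T^2|\bm{v}|_{\bm{W}^{2,1}(T)}$ and, via the trace inequality of Lemma \ref{Lemmmma}, $\|\bm{v}-I_h\bm{v}\|_{\bm{L}^{1}(e)}\lesssim h_e|\bm{v}|_{\bm{W}^{2,1}(T)}$ in place of the first-order bounds on $(\bm{G}-e_p\bm{G})$ used in Lemma \ref{bound1}. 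Then every factor $h_p^{-1}$ or $h_p^{-2}$ produced by Hölder against the local estimators $\eta_{i,p}$ is absorbed, and the finite overlap of the patches $\{\Omega_p\}$ gives $|\langle\tilde{\bm{\mathcal{L}}}_h,\bm{v}-I_h\bm{v}\rangle_{-1,1}|\lesssim\Psi\,|\bm{v}|_{\bm{W}^{2,1}(\Omega)}\lesssim\Psi$, with no logarithm.

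For the remaining finite-element term, since $I_h\bm{v}\in\bm{V}_h$ the orthogonality relations \eqref{prop1}, \eqref{prop2} collapse $\langle\tilde{\bm{\mathcal{L}}}_h,I_h\bm{v}\rangle_{-1,1}$ to $\sum_{p\in\cN_h^C\cup\cM_h^C}v_1(p)\langle\tilde{\bm{\mathcal{L}}}_h,\phi_p\bm{e}_1\rangle_{-1,1}$, while \eqref{rel10} gives $\langle\bar{\bm{\lambda}}_h,\bm{v}\rangle_{-1,1}=\sum_{p\in\cN_h^C\cup\cM_h^C}e_p(v_1)\langle\tilde{\bm{\mathcal{L}}}_h,\phi_p\bm{e}_1\rangle_{-1,1}$, so the difference equals $\sum_{p\in\cN_h^C\cup\cM_h^C}\langle\tilde{\bm{\mathcal{L}}}_h,\phi_p\bm{e}_1\rangle_{-1,1}\,\big(v_1(p)-e_p(v_1)\big)$. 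The representation \eqref{residual} restricted to $\Omega_p$, together with $\|\phi_p\|_{L^{1}(\Omega_p)}\lesssim h_p^2$ and $\|\phi_p\|_{L^{1}(e)}\lesssim h_p$, yields the node-wise bound $|\langle\tilde{\bm{\mathcal{L}}}_h,\phi_p\bm{e}_1\rangle_{-1,1}|\lesssim\eta_{1,p}+\eta_{2,p}+\eta_{3,p}+\eta_{5,p}\lesssim\Psi$; and since $e_p(v_1)$ is a convex average of $v_1$ over the arc $\bar{\Gamma}_{p,C}\subseteq\Gamma_{p,C}\ni p$, a one-dimensional Poincaré estimate gives $|v_1(p)-e_p(v_1)|\lesssim\|\partial_s v_1\|_{L^{1}(\Gamma_{p,C})}$. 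Summing over the contact nodes, the finite overlap of $\{\Gamma_{p,C}\}$ along $\Gamma_C$ and the trace bound $\|\nabla v_1\|_{L^{1}(\Gamma_C)}\lesssim\|\bm{v}\|_{\bm{W}^{2,1}(\Omega)}$ give $\sum_{p}|v_1(p)-e_p(v_1)|\lesssim 1$, so this term is $\lesssim\Psi$ as well; combining the two contributions and taking the supremum over admissible $\bm{v}$ proves \eqref{bound3}.

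I expect the main obstacle to be exactly the removal of the logarithmic factor present in Lemma \ref{bound1}: this forces one to use the full $\bm{W}^{2,1}$ regularity through second-order approximation, and then to control, on $\Gamma_C$, the mismatch between the point-value interpolant used in the residual term and the constant averaging operator $e_p$ hard-wired into $\bar{\bm{\lambda}}_h$ (needed for the sign property of Lemma \ref{sign2}); this is where the codimension-one structure of the contact set and the trace of $\nabla\bm{v}$ on $\Gamma_C$ enter, and where the bookkeeping is the most delicate. The elementwise integration-by-parts/Hölder part is lengthy but entirely parallel to the computation already carried out in the proof of Lemma \ref{bound1}.
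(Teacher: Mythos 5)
Your proof is correct, but it follows a genuinely different route from the paper's. The paper disposes of this lemma in a few lines: it repeats verbatim the patch-wise decomposition of Lemma \ref{bound1} built on the averaging operators $e_p$, so that only the \emph{first-order} approximation estimates of Lemmas \ref{approxprop}--\ref{approxprop1} are invoked, arrives at the bound $\Psi\sum_{T\in\cT_h}h_T^{-1}|v_i|_{W^{1,1}(T)}$ exactly as in \eqref{DFA}, and then absorbs the remaining factor $h_T^{-1}$ not by the dyadic decomposition that produces $l_h$ for the Green's matrix, but by the elementwise Cauchy--Schwarz inequality $h_T^{-1}|v_i|_{W^{1,1}(T)}\lesssim |v_i|_{W^{1,2}(T)}$ followed by the Poincar\'e-type embedding $|\cdot|_{1,2,\Omega}\lesssim|\cdot|_{2,1,\Omega}$ quoted from Nochetto et al. You instead split off the quadratic Lagrange interpolant, use genuinely second-order $W^{2,1}$ interpolation estimates so that every power of $h_p$ appearing in the local estimators is matched locally, and then reconcile the nodal values $v_1(p)$ with the averages $e_p(v_1)$ on $\Gamma_C$ through a one-dimensional Poincar\'e/trace argument. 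Your version is longer, but each step is local and sharp, and it bypasses the paper's passage through the elementwise $\ell^1$-sum $\sum_{T}|v_i|_{W^{1,2}(T)}$, which is not controlled by the global $W^{1,2}$-seminorm without further justification and is the one delicate point of the shorter argument; what the paper's version buys is brevity and structural uniformity with Lemma \ref{bound1}. The only ingredients you should state explicitly are the validity of the Lagrange interpolation estimates in the borderline case $\bm{W}^{2,1}(\Omega)\hookrightarrow \bm{C}(\bar\Omega)$ in two dimensions (or replace $I_h$ by a quasi-interpolant to avoid the issue entirely) and the trace bound $\|\nabla v_1\|_{L^1(\Gamma_C)}\lesssim\|v_1\|_{W^{2,1}(\Omega)}$; both are standard.
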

\begin{proof}
	Let $\bm{v}\in \bm{V} \cap \bm{U}$. We follow the same steps as in the Lemma \ref{bound1} and using Cauchy-Schwarz inequality and {the Poincar$\acute{e}$-type inequality $|\phi|_{1,2,\Omega} \lesssim |\phi|_{2,1,\Omega}$} \cite[p 522]{nochetto2006pointwise} to derive the following
	\begin{align}
	\bm{\mathcal{G}}_h( \bm{v})  & \lesssim  \Psi \Big( \sum_{i=1}^{2} \Big(  \sum_{p \in \cN_h \cup \cM_h} h^{-2}_p \|(v_i-e_p(v_i))\|_{L^1(\Omega_p)} + h^{-1}_p \|\nabla(v_i-e_p(v_i))\|_{L^1(\Omega_p)} \Big)   \Big) \notag  \\ & \lesssim \Psi \Big(\sum_{i=1}^{2} \sum_{T \in \cT_h} h^{-1}_T |v_i |_{W^{1,1}(T)}\Big) \notag \\&
	\lesssim \Psi \Big(\sum_{i=1}^{2} \sum_{T \in \cT_h}  |v_i|_{W^{1,2}(T)}\Big) \notag \\&
	\lesssim \Psi \Big(\sum_{i=1}^{2} \sum_{T \in \cT_h} |v_i |_{W^{2,1}(T)}\Big) \notag.
	\end{align}	
	Finally, in view of the definition \eqref{norm}, we obtain  the estimate \eqref{bound3}.
\end{proof}
\noindent
Now we proceed to provide a proof of  Theorem \ref{thm:rel}.
\break
\textbf{Proof of Theorem \ref{thm:rel}}
{
 Using the Lemma \ref{jhbd} and the estimate for $\|\bm{\zeta}\|_{\bm{L}^{\infty}(\Omega)}$ from Lemma \ref{bound1}, we get the following reliability  estimate for $\|\bm{u}-\bm{u}_h\|_{\bm{L}^{\infty}(\Omega)}$. Next, in order to estimate $\|\bm{\lambda}-\bar{\bm{\lambda}}_h\|_{-2,\infty,\Omega}$, we let $\bm{v}\in \bm{V} \cap \bm{U}$. Using \eqref{eq:extGAL}, integration by parts, definition of $a(\cdot,\cdot)$ and H\"older's inequality, we conclude}
	\begin{align} \label{BOUND}
	\langle \bm{\mathcal{G}}_h, \bm{v} \rangle_{-1,1} &= \int_{\Omega} \bm{\epsilon}(\bm{u}-\bm{u}_h) : \bm{\sigma}(\bm{v}) dx +   \langle \bm{\lambda}-\bar{\bm{\lambda}}_h , \bm{v} \rangle_{-1,1} \notag \\&= - \int_{\Omega} (\bm{u}-\bm{u}_h) \cdot \text{div} \bm{\sigma}(\bm{v})  dx +   \langle \bm{\lambda}-\bar{\bm{\lambda}}_h , \bm{v} \rangle_{-1,1} \hspace{2cm} \notag \\& \lesssim \|\bm{u}-\bm{u}_h\|_{\bm{L}^{\infty}(\Omega)} |\bm{v}|_{\bm{W}^{2,1}(\Omega)} + \|\bm{\lambda}-\bar{\bm{\lambda}}_h\|_{-2,\infty,\Omega} |\bm{v}|_{\bm{W}^{2,1}(\Omega)}.
	\end{align}
	\noindent
	Thus, using the definition \eqref{norm}, we find
	\begin{align} \label{oo}
	\|\bm{\lambda}-\bar{\bm{\lambda}}_h\|_{-2,\infty,\Omega} \lesssim \|\bm{u}-\bm{u}_h\|_{\bm{L}^{\infty}(\Omega)} + \|\bm{\mathcal{G}}_h\|_{-2,\infty,\Omega}.
	\end{align}
	 Hence, we have the desired reliability estimate for the error in contact force density by using the Lemma \ref{bound} and the realibity estimate for $\|\bm{u}-\bm{u}_h\|_{\bm{L}^{\infty}(\Omega)}$.
\subsection{Efficiency of the error estimator} In this section, we show that the residual contributions of the error estimator $\eta_h$, defined in \eqref{est}, is bounded above by the error plus data oscillations. Standard bubble function techniques \cite{verfurth1996review} are used to prove the efficiency estimates of the error estimator in this section. In the analysis below, we denote for any $\bm{v} \in \bm{V}_h$ and $T \in \cT_h$, $\bm{\epsilon}_h(\bm{v})$ as $\bm{\epsilon}_h(\bm{v})|_T= \bm{\epsilon(\bm{v})}$ on $T$ and $\bm{\sigma}_h(\bm{v})= 2 \mu \bm{\epsilon}_h(\bm{v}) + \zeta~ tr (\bm{\epsilon}_h(\bm{v}))  I$. We denote the term $Osc(\bm{f},p):=h^2_p \|\bm{f}-\bar{\bm{f}}\|_{\bm{L}^{\infty}(\Omega_p)}$ to be the data oscillation of the load vector $\bm{f}$, where  $\bar{\bm{f}}$ is the piecewise constant approximation of $\bm{f}$.  The oscillation term related to the Neumann data $\bm{g}$ is defined by, $ Osc(\bm{g},p):= h_p\|\bm{g}-\bar{\bm{g}}\|_{\bm{L}^{\infty}(\Gamma_{p,N})}$,  where $\bar{\bm{g}}$ is piecewise constant approximation of $\bm{g}$.
\begin{remark}
	Let $p \in \cN^C_h \cup \cM^C_h$ and $e \in \Gamma_{p,C}$ be such that $ \langle \bm{\lambda}_h, \psi_p \bm{e}_1 \rangle_h >0 $, then
	\begin{align} \label{eff6}
	\|(\chi-u_{h,1})^+\|_{L^{\infty}(e)} \leq \|(\chi-\chi_h)^+\|_{L^{\infty}(e)} + \|(\chi_h-u_{h,1})^+\|_{L^{\infty}(e)}.
	\end{align}
	For the smooth obstacle function, the first term of the right hand side of the last estimate will be of higher order. The efficiency of the second term of the right hand side of \eqref{eff6} is still less clear due to the quadratic nature of the discrete solution $\bm{u}_h$. This subject will be pursued in the future.
\end{remark}
\noindent
We collect the main result of this section in the next theorem.
\begin{theorem}
 It holds that 
		\begin{align} 
		&\eta_{1,p} + \|(u_{h,1}-\chi)^{+}\|_{L^{\infty}(\Gamma_C)} \lesssim \|\bm{u}-\bm{u}_h\|_{\bm{L}^{\infty}(\Omega_p)} + 	\|\bm{\lambda}-\bar{\bm{\lambda}}_h\|_{-2,\infty,\Omega_p} \nonumber \\ &\hspace{5.2cm} +Osc(\bm{f},p) \quad \forall~ p \in \mathcal{N}_h \cup \mathcal{M}_h, \label{eff1}\\
			&\eta_{2,p}  \lesssim  \|\bm{u}-\bm{u}_h\|_{\bm{L}^{\infty}(\Omega_p)} + 	\|\bm{\lambda}-\bar{\bm{\lambda}}_h\|_{-2,\infty,\Omega_p} + Osc(\bm{f},p)  \quad \forall~ p \in \mathcal{N}_h \cup \mathcal{M}_h, \label{eff2}\\
				&\eta_{3,p}  \lesssim  \|\bm{u}-\bm{u}_h\|_{\bm{L}^{\infty}(\Omega_p)} + 	\|\bm{\lambda}-\bar{\bm{\lambda}}_h\|_{-2,\infty,\Omega_p} + Osc(\bm{f},p) + Osc(\bm{g},p) \quad \forall~ p \in \mathcal{N}_h^N \cup \mathcal{M}_h^N, \label{eff3}\\
			&	\eta_{4,p}  \lesssim \|\bm{u}-\bm{u}_h\|_{\bm{L}^{\infty}(\Omega_p)} + 	\|\bm{\lambda}-\bar{\bm{\lambda}}_h\|_{-2,\infty,\Omega_p} + Osc(\bm{f},p) \quad \forall~ p \in \mathcal{N}_h^C \cup \mathcal{M}_h^C, \label{eff4} \\
			&	\eta_{5,p}  \lesssim  \|\bm{u}-\bm{u}_h\|_{\bm{L}^{\infty}(\Omega_p)} + 	\|\bm{\lambda}-\bar{\bm{\lambda}}_h\|_{-2,\infty,\Omega_p} + Osc(\bm{f},p) \quad \forall~ p \in \mathcal{N}_h^C \cup \mathcal{M}_h^C. \label{eff5}
		\end{align}
\end{theorem}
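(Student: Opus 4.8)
The plan is to establish each local lower bound by the standard bubble-function / residual technique of Verf\"urth, adapted to the supremum-norm setting where the dual norm $\|\cdot\|_{-2,\infty,\Omega_p}$ replaces the usual $H^{-1}$-based measure. I would treat the five estimators in a unified way: for each interior residual, edge jump, or boundary jump contribution, I would construct a suitable polynomial weight (an interior bubble $b_T$ supported on a single element for $\eta_{1,p}$, an edge bubble $b_e$ for $\eta_{2,p}$ and $\eta_{3,p}$, and a boundary-edge bubble on $\Gamma_C$ for $\eta_{4,p}$ and $\eta_{5,p}$), multiply it against the relevant residual (or its piecewise-constant approximation), and test the residual equation \eqref{eq:GAL}--\eqref{eq:extGAL} against this bubble. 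Integration by parts then exposes the local residual, and the equivalence of norms on finite-dimensional polynomial spaces together with the inverse estimates of Lemma \ref{Lemama} and the trace inequality of Lemma \ref{Lemmmma} converts the resulting bound into the stated right-hand side.

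In more detail: for \eqref{eff1}, I would take $\bm{w}=h_p^2\,\bar{\bm{s}}(\bm{u}_h)\,b_T$ on each $T\subset\Omega_p$ (with $\bar{\bm{s}}$ the piecewise-constant projection of $\bm{s}(\bm{u}_h)=\bm{f}+\mathrm{div}\,\bm{\sigma}(\bm{u}_h)$), note $\langle\bm{\lambda}-\bar{\bm{\lambda}}_h,\bm{w}\rangle_{-1,1}$ contributes the dual-norm term after using $\|\bm{w}\|_{\bm{W}^{2,1}(\Omega_p)}\lesssim\|\bar{\bm{s}}\|_{\bm{L}^\infty(\Omega_p)}$ (inverse estimates), and that $a(\bm{u}-\bm{u}_h,\bm{w})=-\int(\bm{u}-\bm{u}_h)\cdot\mathrm{div}\,\bm{\sigma}(\bm{w})$ is controlled by $\|\bm{u}-\bm{u}_h\|_{\bm{L}^\infty(\Omega_p)}$ times $|\bm{w}|_{\bm{W}^{2,1}(\Omega_p)}$; the discrepancy $\bm{s}-\bar{\bm{s}}$ is absorbed into $Osc(\bm{f},p)$. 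The edge-jump bounds \eqref{eff2} and \eqref{eff3} follow the same pattern with an edge bubble extended into the two adjoining elements, where one additionally uses the already-established interior bound to control the volume residual that appears after integrating by parts; \eqref{eff3} carries the extra $Osc(\bm{g},p)$ from approximating $\bm{g}$ by $\bar{\bm{g}}$. For the contact-boundary terms \eqref{eff4} (the tangential component $\hat\sigma_2(\bm{u}_h)$) and \eqref{eff5} (the normal component $\hat\sigma_1(\bm{u}_h)$), I would use a boundary bubble on $e\subset\Gamma_{p,C}$; here \eqref{char2} (or equivalently Lemma \ref{lem:disL}, giving $\lambda_{h,2}\equiv 0$) kills the contribution of the quasi-discrete density in the tangential direction, while in the normal direction the sign property and the fact that $\bar{\bm{\lambda}}_h$ is built from $\hat\sigma_1$ means the $\langle\bm{\lambda}-\bar{\bm{\lambda}}_h,\cdot\rangle$ term is again absorbed into the dual norm. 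Finally, the obstacle consistency term $\|(u_{h,1}-\chi)^+\|_{L^\infty(\Gamma_C)}$ in \eqref{eff1} is handled separately: at a point where $u_{h,1}>\chi$ one has $u_{h,1}-\chi = (u_{h,1}-u_1)+(u_1-\chi)\le |u_{h,1}-u_1|$ since $u_1\le\chi$ on $\Gamma_C$, so this term is pointwise dominated by $\|\bm{u}-\bm{u}_h\|_{\bm{L}^\infty(\Omega_p)}$ after a trace argument.

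The main obstacle I anticipate is the bookkeeping for the dual norm $\|\bm{\lambda}-\bar{\bm{\lambda}}_h\|_{-2,\infty,\Omega_p}$: unlike the classical energy-norm efficiency proof where one simply pairs against the bubble and invokes Cauchy--Schwarz, here the test function $\bm{w}$ built from a bubble must be verified to lie in (a localized version of) $\bm{V}\cap\bm{U}$ — in particular it must satisfy $\bm{\sigma}(\bm{w})\bm{n}=0$ on $\Gamma_C\cup\Gamma_N$, which a raw bubble need not — and its $\bm{W}^{2,1}(\Omega_p)$-norm must be estimated with the correct powers of $h_p$ so that the scaling matches $\eta_{i,p}$. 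Reconciling the boundary condition in the definition of $\bm{U}$ with the bubble construction (probably by a local correction or by restricting to interior bubbles whose support avoids $\Gamma_C\cup\Gamma_N$, then treating boundary estimators via the $\Gamma$-localized dual norm) is the delicate point; the rest is routine scaling.
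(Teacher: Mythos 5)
Your proposal follows essentially the same route as the paper's proof: interior and edge bubble functions, equivalence of norms on finite-dimensional polynomial spaces, the inverse estimates of Lemma \ref{Lemama}, testing the (extended) Galerkin functional against the bubble with $\langle\bar{\bm{\lambda}}_h,\cdot\rangle_{-1,1}=0$ for the chosen bubbles, absorbing $\bm{f}-\bar{\bm{f}}$ and $\bm{g}-\bar{\bm{g}}$ into the oscillation terms, and the immediate bound on $\|(u_{h,1}-\chi)^{+}\|_{L^{\infty}(\Gamma_C)}$ from $u_1\leq\chi$ on $\Gamma_C$. The $\bm{U}$-membership issue you flag is dealt with in the paper simply by taking bubbles in $W^{2,1}_0$ (so the traction of the extension vanishes on $\Gamma_C\cup\Gamma_N$) and, for the contact estimator $\eta_{5,p}$, by choosing $b_e$ with $e_p(b_e)=0$; otherwise the two arguments coincide.
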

\begin{proof}
		{$\bullet$} The lower bound on the second term $ \|(u_{h,1}-\chi)^{+}\|_{L^{\infty}(\Gamma_C)}$ follows immediately as $u_1 \leq \chi$ on $\Gamma_C$. Next, we bound the term $\eta_{1,p}=h_p^2 \|\bm{s}(\bm{u}_h)\|_{\bm{L}^{\infty}(\Omega_p)}$ where $p \in \mathcal{N}_h \cup \mathcal{M}_h.$ Let $T \in \Omega_p$ and assume $\phi_T \in \mathbb{P}_3(T) \cap W^{2,1}_0(T)$ be the bubble function \cite{verfurth1996review} which takes unit value at the barycenter of $T$ and zero value on $\partial T$. Let $\bm{\beta}_T=\phi_T(\bar{\bm{f}}+\text{div} \bm{\sigma}_h(\bm{u}_h))$ on $T$ and set $\bm{\beta} \in \bm{H}^1(\Omega) \cap \bm{W}^{2,1}_0(\Omega)$ to be an extension of $\bm{\beta}_T$ to whole $\Omega$ by zero. Using the equivalence of norms in finite dimensional spaces on a reference element followed by scaling, we have an existence of positive constants $C_1$ and $C_2$ such that
		\begin{align} \label{equi}
 C_1 \int_T (\text{div} \bm{\sigma}_h(\bm{u}_h) + \bar{\bm{f}}) \cdot \bm{\beta}~dx \leq \|\text{div} \bm{\sigma}_h(\bm{u}_h) + \bar{\bm{f}}\|^2_{\bm{L}^2(T)} \leq C_2 \int_T (\text{div} \bm{\sigma}_h(\bm{u}_h) + \bar{\bm{f}}) \cdot \bm{\beta}~dx.
\end{align}
Using H\"older's inequality, Lemma \ref{Lemama} and together with the structure of $\bm{\beta}_T$, we get
\begin{align} \label{estt}
|\bm{\beta}_T|_{\bm{W^{2,1}}(T)} & \lesssim h^{-2}_T \|\bm{\beta}_T\|_{\bm{L}^1(T)} \lesssim  \|\bm{\beta}_T\|_{\bm{L}^{\infty}(T)} \lesssim \|(\text{div} \bm{\sigma}_h(\bm{u}_h) + \bar{\bm{f}})\|_{\bm{L}^{\infty}(T)}.
\end{align}
and 
\begin{align} \label{estt1}
\|\bm{\beta}_T\|_{\bm{L}^1(T)} &\lesssim h^2_T \|\bm{\beta}_T\|_{\bm{L}^{\infty}(T)}  \lesssim h^2_T \|(\text{div} \bm{\sigma}_h(\bm{u}_h) + \bar{\bm{f}})\|_{\bm{L}^{\infty}(T)}.
\end{align}
A use of estimates \eqref{estt}, \eqref{estt1}, H\"older's inequality, integration by parts, Lemma \ref{Lemama}, equations \eqref{equi} and $\langle \bar{\bm{\lambda}}_h, \bm{\beta} \rangle_{-1,1}=0$ yields
	\begin{align} \label{o}
h^4_T\|\text{div} \bm{\sigma}_h(\bm{u}_h) + \bar{\bm{f}}\|^2_{\bm{L}^{\infty}(T)} & \lesssim h^{2}_T\|\text{div} \bm{\sigma}_h(\bm{u}_h) + \bar{\bm{f}}\|^2_{\bm{L}^2(T)} \notag \\ &\lesssim h^{2}_T \int_T (\text{div} \bm{\sigma}_h(\bm{u}_h) + \bar{\bm{f}}) \bm{\beta} ~dx \notag \\ &= h^{2}_T \Big(\int_T (\bm{\bar{f}}- \bm{f})\bm{\beta} ~dx + \int_T (\bm{f}+ \text{div} \bm{\sigma}_h(\bm{u}_h))\bm{\beta} ~dx\Big) \notag \\ & =h^{2}_T \Big( \int_T (\bm{\bar{f}}- \bm{f})\bm{\beta} ~dx + \langle \bm{\mathcal{G}}_h ,\bm{\beta} \rangle_{-1,1}  \Big)\hspace{1cm} \notag\\ & \lesssim h^{2}_T \Big(\|\bm{\bar{f}}- \bm{f}\|_{\bm{L}^{\infty}(T)} \|\bm{\beta}_T\|_{\bm{L}^1(T)} + \|\bm{\mathcal{G}}_h\|_{-2,\infty,T} |\bm{\beta}_T|_{\bm{W^{2,1}}(T)} \Big) \notag  \\ & \lesssim h^2_T \Big( \|\bm{\mathcal{G}}_h\|_{-2,\infty,T} + h^2_T\|\bm{\bar{f}}- \bm{f}\|_{\bm{L}^{\infty}(T)} \Big)\|\text{div} \bm{\sigma}_h(\bm{u}_h) + \bar{\bm{f}}\|_{\bm{L}^{\infty}(T)} \notag
\end{align} 
and finally, we have the desired estimate \eqref{eff1} using equations \eqref{norm} and \eqref{BOUND}.\\
{$\bullet$} Let $p \in \mathcal{N}_h \cup \mathcal{M}_h$ and $e \in \Gamma_{p,I}$ be an interior edge sharing the elements $T_1$ and $T_2$. We denote $\tau_e= T_1 \cup T_2$ and $\phi_e \in W^{2,1}_0(\tau_e) \cap \mathbb{P}_4(\tau_e)$ be a bubble function which assumes unit value at the center of $e$. Define $\bm{q} \in \bm{H}^2_0(\Omega)$ to be an extension of $\bm{q}_e=\phi_e \bm{\cJ}^I(\bm{u}_h)$ to $\bar{\Omega}$ by zero. From the equivalence of norms and scaling on the reference element, we have the existence of two positive constants $C_1$ and $C_2$ such that
	\begin{align}
 C_2 \int_e \bm{\cJ}_e^I(\bm{u}_h) \cdot \bm{q}_e~ds \leq \|\bm{\cJ}_e^I(\bm{u}_h)\|^2_{\bm{L}^2(e)} \leq C_1 \int_e \bm{\cJ}_e^I(\bm{u}_h) \cdot \bm{q}_e~ds.
\end{align}
With the help of Lemma \ref{Lemama}, integration by parts, we have
\begin{align} \label{jhc}
h^2_e \|\bm{\cJ}_e^I(\bm{u}_h)\|^2_{\bm{L}^{\infty}(e)} & \lesssim h_e \|\bm{\cJ}_e^I(\bm{u}_h)\|^2_{\bm{L}^2(e)} \notag \\ & \lesssim h_e \Big(\int_e \bm{\cJ}_e^I(\bm{u}_h) \bm{q}_e~ds \Big)  \notag \\ & = h_e  \Big(\int_{\tau_e} \text{div} \bm{\sigma}_h(\bm{u}_h) \bm{q}_e~dx+ \int_{\tau_e} \bm{\sigma}_h(\bm{u}_h) : \bm{\epsilon}(\bm{q}_e)~dx\Big) \notag \\ & = h_e  \Big(\int_{\tau_e} \bm{s}(\bm{u}_h) \cdot \bm{q}_e~dx - \int_{\tau_e} \bm{f} \cdot \bm{q}_e~dx+ \int_{\tau_e} \bm{\sigma}_h(\bm{u}_h) : \bm{\epsilon}(\bm{q}_e)~dx \Big). \notag 
\end{align}
Next, using  $\langle \bar{\bm{\lambda}}_h, \bm{q} \rangle_{-1,1}=0$ and equations \eqref{eq:sigmadef},  \eqref{eq:extGAL}, we deduce
\begin{align}
h^2_e \|\bm{\cJ}_e^I(\bm{u}_h)\|^2_{\bm{L}^{\infty}(e)} & \lesssim h_e  \Big( \int_{\Omega} \bm{s}(\bm{u}_h) \bm{q}~dx -\langle \bm{\mathcal{G}}_h , \bm{q} \rangle_{-1,1}  \Big).
\end{align}
A use of H\"older's inequality and structure of $\bm{q}_e$ gives
\begin{align}
h^2_e \|\bm{\cJ}_e^I(\bm{u}_h)\|^2_{\bm{L}^{\infty}(e)} & \lesssim h_e \Big( \|\bm{s}(\bm{u}_h)\|_{\bm{L}^{\infty}(\tau_e)} \|\bm{q}_e\|_{\bm{L}^1(\tau_e)} + \|\bm{\mathcal{G}}_h\|_{-2,\infty,\tau_e} |\bm{q}_e|_{\bm{W}^{2,1}(\tau_e)} \Big) \notag \\ & \lesssim h_e \Big(h^2_e \|\bm{s}(\bm{u}_h)\|_{\bm{L}^{\infty}(\tau_e)}  \|\bm{q}_e\|_{\bm{L}^{\infty}(\tau_e)} + \|\bm{\mathcal{G}}_h\|_{-2,\infty,\tau_e} \|\bm{q}_e\|_{\bm{L}^{\infty}(\tau_e)}  \Big) \notag  \\& \lesssim h_e \Big( \|\bm{\mathcal{G}}_h\|_{-2,\infty,\tau_e} + h_e^2\|\bm{s}(\bm{u}_h)\|_{\bm{L}^{\infty}(\tau_e)}   \Big)\|\bm{\cJ}_e^I(\bm{u}_h)\|_{\bm{L}^{\infty}(e)}. \notag
\end{align}
Finally, we have the desired estimate using bounds from \eqref{eff1} and $\|\bm{G}_h\|_{-2,\infty,\tau_e}$ from equation \eqref{BOUND}.\\
	{$\bullet$} For $p \in \mathcal{N}_h^N \cup \mathcal{M}_h^N$, let $e \in \Gamma_{p,N}$ be a Neumann edge sharing the element $T$. We define the bubble function $\psi_e \in \mathbb{P}_2(T) \in W^{2,1}_0(T)$ corresponding to edge which is zero on $\partial T \setminus e$ and assumes unit value at the midpoint of $e$. Let $\bm{\phi}_e = \psi_e (\bm{\sigma}(\bm{u}_h)\bm{n}- \bar{\bm{g}})$ and $\bm{\phi} \in \bm{H}^1_0(\Omega)$ to be an extension of $\bm{\phi}_e$ by zero outside $T$. By the equivalence of norms in finite dimensional normed spaces and scaling arguments, there exists a positive constant $C_1$ such that
	\begin{align} \label{esti}
	 \|\bm{\sigma}(\bm{u}_h)\bm{n}- \bar{\bm{g}}\|^2_{\bm{L}^2(e)} & \leq C_1 \int_e (\bm{\sigma}(\bm{u}_h)\bm{n}- \bar{\bm{g}}) \bm{\phi}_e~ds \notag \\ & \lesssim \int_e (\bm{\sigma}(\bm{u}_h)\bm{n}- \bm{g}) \bm{\phi}_e~ds + \int_e (\bm{g}-\bar{\bm{g}}) \bm{\phi}_e~ds.
	 \end{align}
A use of inverse inequality (Lemma \ref{Lemama}), estimate \eqref{esti}, $\langle \bar{\bm{\lambda}}_h, \bm{\phi} \rangle_{-1,1}=0$ and integration by parts yields
\begin{align}
h^2_e \|\bm{\sigma}(\bm{u}_h)\bm{n}- \bar{\bm{g}}\|^2_{\bm{L}^{\infty}(e)} & \lesssim h_e \|\bm{\sigma}(\bm{u}_h)\bm{n}- \bar{\bm{g}}\|^2_{\bm{L}^2(e)} \notag \\  & = h_e  \Bigg(\int_{T} \text{div} \bm{\sigma}_h(\bm{u}_h) \bm{\phi}_e~dx+ \int_{T} \bm{\sigma}_h(\bm{u}_h) : \bm{\epsilon}(\bm{\phi}_e)~dx - \int_e \bm{g} \bm{\phi}_e~ds \notag \\ & \hspace{1cm}+ \int_e (\bm{g}-\bar{\bm{g}}) \bm{\phi}_e~ds\Bigg) \notag \\ & = h_e \Bigg( \int_{\Omega} \bm{s}(\bm{u}_h) \bm{\phi}~dx -\langle \bm{\mathcal{G}}_h , \bm{\phi} \rangle_{-1,1}  + \int_e (\bm{g}-\bar{\bm{g}}) \bm{\phi}_e~ds \Bigg).
\end{align}
Inserting the structure of $\bm{\phi}_e$ and using H\"older's inequality, we deduce
\begin{align}
h^2_e \|\bm{\sigma}(\bm{u}_h)\bm{n}- \bar{\bm{g}}\|^2_{\bm{L}^{\infty}(e)}  & \lesssim h_e \Big( \|\bm{s}(\bm{u}_h)\|_{\bm{L}^{\infty}(T)} \|\bm{\phi}_e\|_{\bm{L}^1(T)} + \|\bm{\mathcal{G}}_h\|_{-2,\infty,T} |\bm{\phi}_e|_{\bm{W}^{2,1}(T)} \notag \\ & \hspace{2cm}+  \|\bm{g}-\bar{\bm{g}}\|_{\bm{L}^{\infty}(e)}  \|\bm{\phi}_e\|_{\bm{L}^1(e)} \Big) \notag \\ & \lesssim h_e \Big(h^2_e \|\bm{s}(\bm{u}_h)\|_{\bm{L}^{\infty}(T)}  \|\bm{\phi}_e\|_{\bm{L}^{\infty}(T)} + \|\bm{\mathcal{G}}_h\|_{-2,\infty,T} \|\bm{\phi}_e\|_{\bm{L}^{\infty}(T)} \notag \\ & \hspace{1cm }+ h_e\|\bm{g}-\bar{\bm{g}}\|_{\bm{L}^{\infty}(e)}  \|\bm{\phi}_e\|_{\bm{L}^{\infty}(e)}   \Big) \notag  \\& \lesssim h_e \Big( \|\bm{\mathcal{G}}_h\|_{-2,\infty,T} + h_T^2\|\bm{s}(\bm{u}_h)\|_{\bm{L}^{\infty}(T)}  \notag \\ & \hspace{2cm}+ h_e \|\bm{g}-\bar{\bm{g}}\|_{\bm{L}^{\infty}(e)}  \Big)\|\bm{\sigma}(\bm{u}_h)\bm{n}- \bar{\bm{g}}\|_{\bm{L}^{\infty}(e)}. \notag
\end{align}
Hence, we have the desired estimate \eqref{eff3} using \eqref{BOUND} and \eqref{eff1}.\\
{$\bullet$} We omit the proof for \eqref{eff4} as it follows similarly using the bubble functions technique. Next, we prove \eqref{eff5}. Assume $p \in \mathcal{N}_h^C \cup \mathcal{M}_h^C$ and $e \in \Gamma_{p,C}$ be the corresponding edge sharing the triangle $T$. We follow the same path as in the article \cite{krause2015efficient} to have the suitable bubble function ${b}_e$ such that $e_p(b_e)=0,$ where $e_p(\cdot)$ is the scalar defined in equation (\ref{eq:DPD}).
	\begin{align*} 
\langle \bm{G}_h, b_e\bm{e}_1 \rangle_{-1,1} &=a(\bm{u}-\bm{u}_h,b_e\bm{e}_1)+\langle \bm{\lambda}-\bar{\bm{\lambda}}_h, b_e\bm{e}_1 \rangle_{-1,1}  \notag\\
&= (\bm{f},b_e\bm{e}_1)-a(\bm{u}_h,b_e\bm{e}_1)+\langle \bm{g}, b_e\bm{e}_1 \rangle_{\Gamma_N}- \langle \bar{\bm{\lambda}}_h, b_e\bm{e}_1 \rangle_{-1,1}  \notag\\ & = \Bigg(\int_{\Omega} \bm{s}(\bm{u}_h) \cdot b_e\bm{e}_1~dx - \int_e ( \hat{\bm{\sigma}}(\bm{u}_h)- \bm{g}) \cdot b_e\bm{e}_1 ~ds   -  \int_e \hat{\bm{\sigma}}(\bm{u}_h)\cdot b_e\bm{e}_1~ds \Bigg).
\end{align*}
Therefore, we have
\begin{align} \label{eqeq}
 \int_e \hat{\bm{\sigma}}(\bm{u}_h)\cdot b_e\bm{e}_1~ds =  \int_{\Omega} \bm{s}(\bm{u}_h) \cdot b_e\bm{e}_1~dx - \int_e ( \hat{\bm{\sigma}}(\bm{u}_h)- \bm{g}) \cdot b_e\bm{e}_1 ~ds - \langle \bm{\mathcal{G}}_h, b_e\bm{e}_1 \rangle_{-1,1}.
\end{align}
In the view of Lemma \ref{Lemama}, the following holds
\begin{align*}
h^2_e \|\hat{\sigma_1} (\bm{u}_h)\|^2_{\bm{L}^{\infty}(e)} \lesssim h_e \|\hat{\sigma_1} (\bm{u}_h)\|^2_{\bm{L}^{2}(e)} \lesssim h_e \int_e \hat{\sigma_1} (\bm{u}_h) \hat{\sigma_1} (\bm{u}_h) b_e~ds.
\end{align*}
Inserting \eqref{eqeq}, we get
\begin{align*}
h^2_e \|\hat{\sigma_1} (\bm{u}_h)\|^2_{\bm{L}^{\infty}(e)} = h_e \Big(-\langle \bm{\mathcal{G}}_h, \hat{\sigma_1} (\bm{u}_h) b_e \bm{e}_1 \rangle_{-1,1} & + \int_{\cT_e} s_1(\bm{u}_h) \hat{\sigma_1} (\bm{u}_h) b_e~ds  \\ &+  \int_e ( \hat{\bm{\sigma}}(\bm{u}_h)- \bm{g}) \cdot  \hat{\sigma_1} (\bm{u}_h) b_e \bm{e}_1 ~ds \Big ).
\end{align*}
Using Lemma \ref{Lemama}, H\"older's inequality and the structure of $b_e$, we get
\begin{align*}
h^2_e \|\hat{\sigma_1} (\bm{u}_h)\|^2_{\bm{L}^{\infty}(e)}	& \lesssim h_e \Big( \|\bm{\mathcal{G}}_h\|_{-2,\infty,T} \|\hat{\sigma_1} (\bm{u}_h) b_e \|_{\bm{W}^{2,1}(T)} \\ & \hspace{3cm}+\bigg \{\|\bm{s}(\bm{u}_h)\|_{\bm{L}^{\infty}(T)} + \|\hat{\bm{\sigma}}(\bm{u}_h)- \bm{g} \|_{\bm{L}^{\infty}(e)}   \bigg\} 
\|\hat{\sigma_1} (\bm{u}_h) b_e \|_{\bm{L}^{1}(T)} \Big) \notag \\ & \lesssim h_e \Big( \|\bm{\mathcal{G}}_h\|_{-2,\infty,T}+  h^2_e \|\bm{s}(\bm{u}_h)\|_{\bm{L}^{\infty}(T)} +h_e\|\hat{\bm{\sigma}}(\bm{u}_h)- \bm{g} \|_{\bm{L}^{\infty}(e)}   \Big) \|\hat{\sigma_1} (\bm{u}_h) b_e \|_{\bm{L}^{\infty}(T)}  \notag \\ & \lesssim h_e \Big( \|\bm{\mathcal{G}}_h\|_{-2,\infty,T}  + h^2_T \|\bm{s}(\bm{u}_h)\|_{\bm{L}^{\infty}(T)}  + h_e\|\hat{\bm{\sigma}}(\bm{u}_h)- \bm{g} \|_{\bm{L}^{\infty}(e)}  \Big) \|\hat{\sigma_1} (\bm{u}_h) \|_{\bm{L}^{\infty}(e)} .
\end{align*}
Finally, the proof of \eqref{eff5} follows using equations \eqref{eff1} and \eqref{eff3}.
\end{proof}
\section{Numerical Results} \label{sec7}
\noindent
In this section, we employ the error estimator $\eta_h$ (defined in equation \eqref{est}) to solve a variety of contact problems on adaptive meshes. For the adaptive refinement, we use the algorithm based on the following four steps.
\begin{equation*}
{\bf SOLVE}\rightarrow  {\bf ESTIMATE} \rightarrow {\bf
	MARK}\rightarrow {\bf REFINE}
\end{equation*}
In the {\bf SOLVE} step, we solve the discrete inequality (equation \eqref{eq:DVI}) using the primal-dual active set algorithm  \cite{hueber2005priori}. In the {\bf ESTIMATE} step, we evaluate  a posteriori error estimator $\eta_h$ on each element $T \in \cT_h$ where the factor $C_0$ of Theorem \ref{thm:rel} is practically replaced by $C_0 = 0.45$. This suitable choice is consistent with respect to \eqref{rel1} in all the experiments with reasonable shape-regularity and moderate $h_{min}$. To compute the supremum norm, functions are evaluated at {the quadrature points}. In the {\bf MARK} step, the adaptive refinement is based on the maximum norm criterion \cite{verfurth1996review}, which
seems adequate for error control in the supremum norm.
Below, we consider two model contact problems in which the first example is chosen such that the continuous solution $\bm{u}$ is known and smooth and {the continuous solution for the second example is unknown}.
\begin{example} \label{ex1}
Let $\Omega = (0,1)^2$ and we assume the top of our domain $\Omega$ is fixed. The given data is as follows:
	\begin{align*}
	\Gamma_C= (0,1) \times \{0\},~ \Gamma_D= (0,1) \times \{1\}~\text{and}~ \Gamma_N= \{0,1\} \times (0,1).
	\end{align*}
	Let $\zeta=\mu=1$ and the given data $\bm{g}$ and $\bm{f}$ are chosen such that the continuous solution takes the form $\bm{u}:=(u_1,u_2)= (y^2(y-1)),(e^y(1-y)y(x-2))$. 
	In this case, we note that $\bm{n}=(0,-1)$ on $\Gamma_C$, hence, the error estimator defined in \eqref{est} will modify to
	\begin{align}
	\eta_h&= {l_h} \Big(\sum_{i=1}^{5} \eta_i \Big) + \|(-u_{h,2}-\chi)^{+}\|_{L^{\infty}(\Gamma_C)} + \|(\chi+u_{h,2})^+\|_{L^{\infty}(\Lambda_h^{C})}, \\ 
	\text{with}\notag\\
	\Lambda_h^{C} &:= \{\Gamma_{p,C}~:~p \in \cN^C_h \cup \cM^C_h~\text{such that}~ {	\langle  \bm{\lambda}_h, \psi_p\bm{e}_2 \rangle_h <0}  \}.\notag
	\end{align}
	 \Cref{3(a)} displays the behavior of the error $\|\bm{u}-\bm{u}_h\|_{\bm{L}^{\infty}(\Omega)}$ and estimator $\eta_h$ versus the number of degrees of freedom (NDF) in the log-log plot. It is evident that the error $\|\bm{u}-\bm{u}_h\|_{\bm{L}^{\infty}(\Omega)}$ and estimator $\eta_h$ converge with the optimal rate (1/(NDF)$^{3/2}$). The efficiency index which is depicted in \Cref{3(b)} indicates the efficiency of the error estimator. Here, the term $\|(\chi+u_{h,2})^+\|_{L^{\infty}(\Lambda_h^{C})}$ is zero since the inactive region on $\Gamma_C$ is empty owing to $\chi=0$. Further, noting that $\chi=\chi_h$ on $\Gamma_C$, the quantity $\|(-u_{h,2}-\chi)^{+}\|_{L^{\infty}(\Gamma_C)}$ vanishes on $\Gamma_C$.  The plot of contributions of individual estimator $\eta_i,~1\leq i \leq 5$ is depicted in \Cref{5(a)}.
\end{example}
\setcounter{figure}{1}
\renewcommand{\thefigure}{\arabic{figure}}
\begin{figure}
		\subfigure[Error and Estimator\label{3(a)}]{\includegraphics[width=8cm]{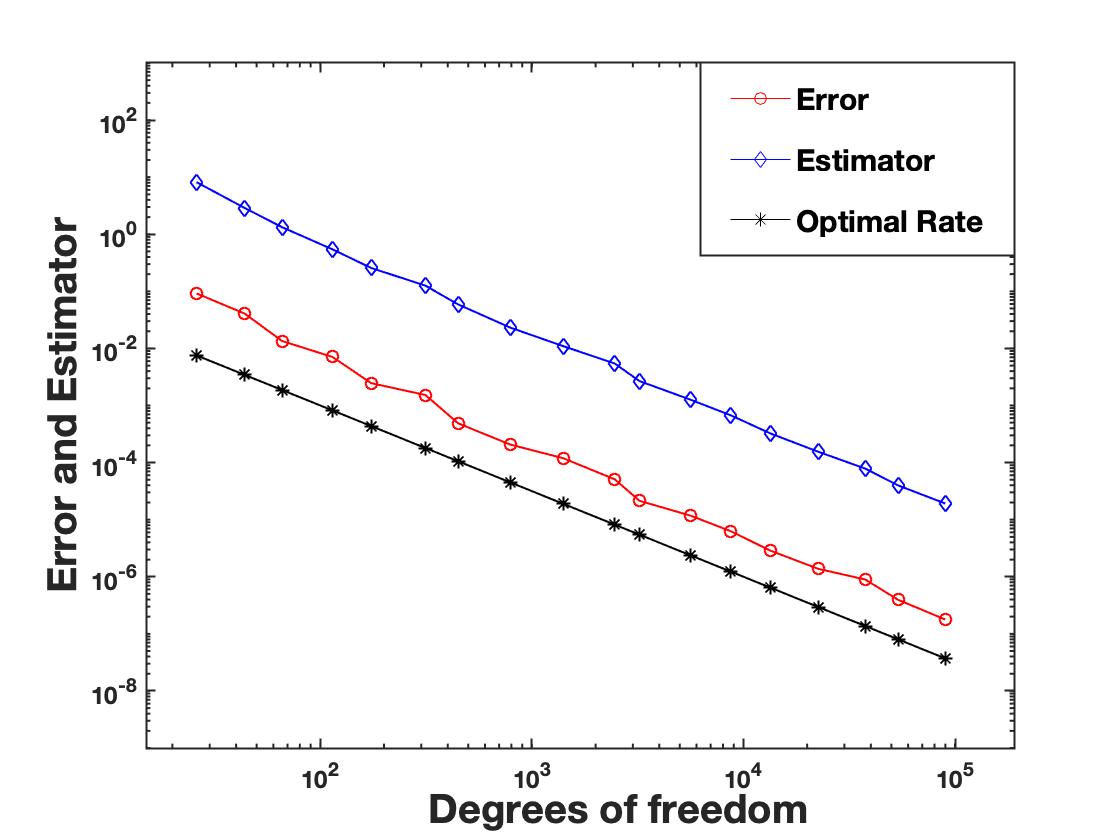}}
			\subfigure[Efficiency Index\label{3(b)}]{\includegraphics[width=8cm]{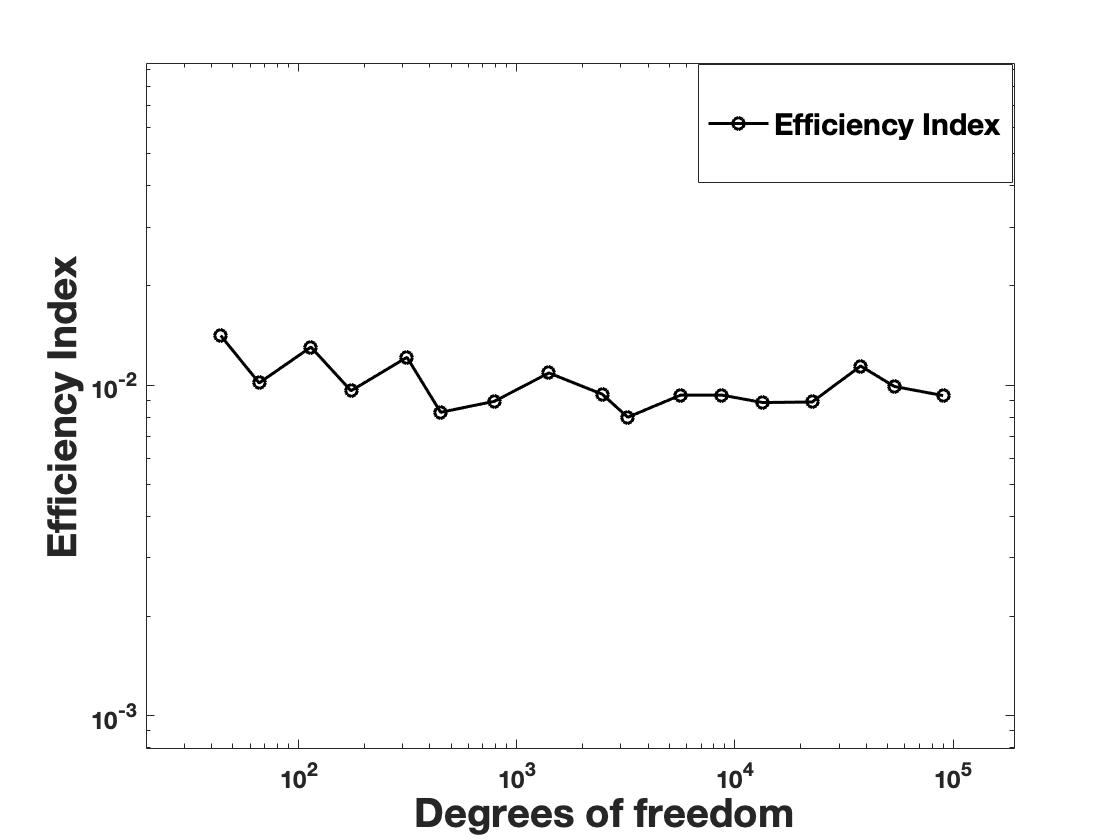}}
\caption{Plot of error, estimator and efficiency index for Example 7.1}
\label{3}
\end{figure}
\begin{figure}
	\subfigure[Estimator\label{4(a)}]{\includegraphics[width=8cm]{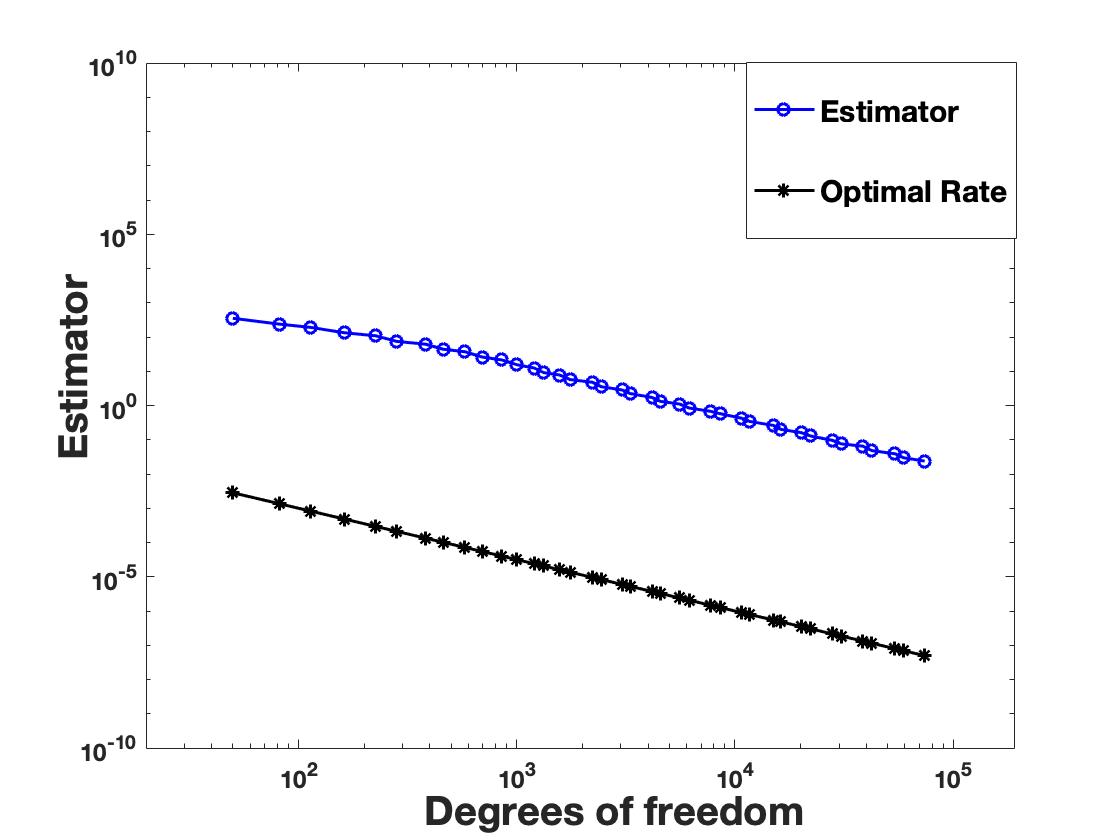}}
	\subfigure[Adaptive Mesh \label{4(b)}]{\includegraphics[width=8cm]{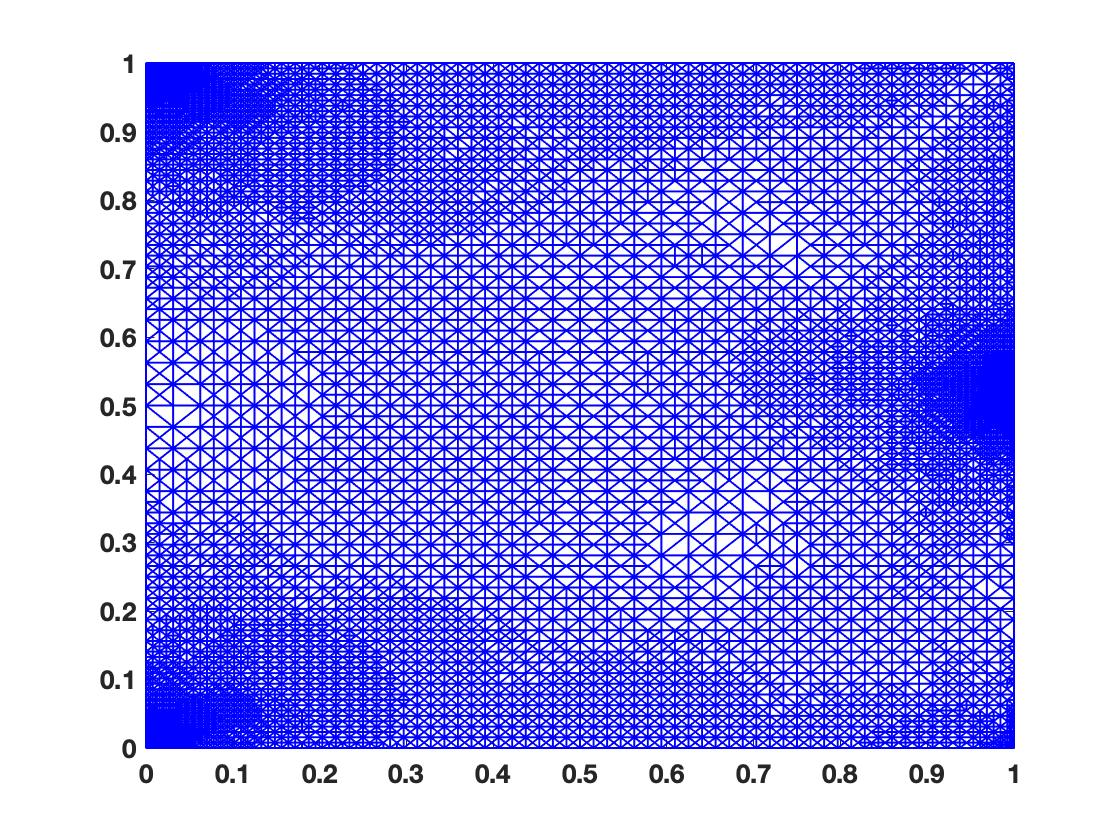}}
	\caption{Estimator and Adaptive mesh for Example 7.2}
	\label{4}
\end{figure}
\begin{figure}
	\subfigure[Example 7.1 \label{5(a)}]{\includegraphics[width=8cm]{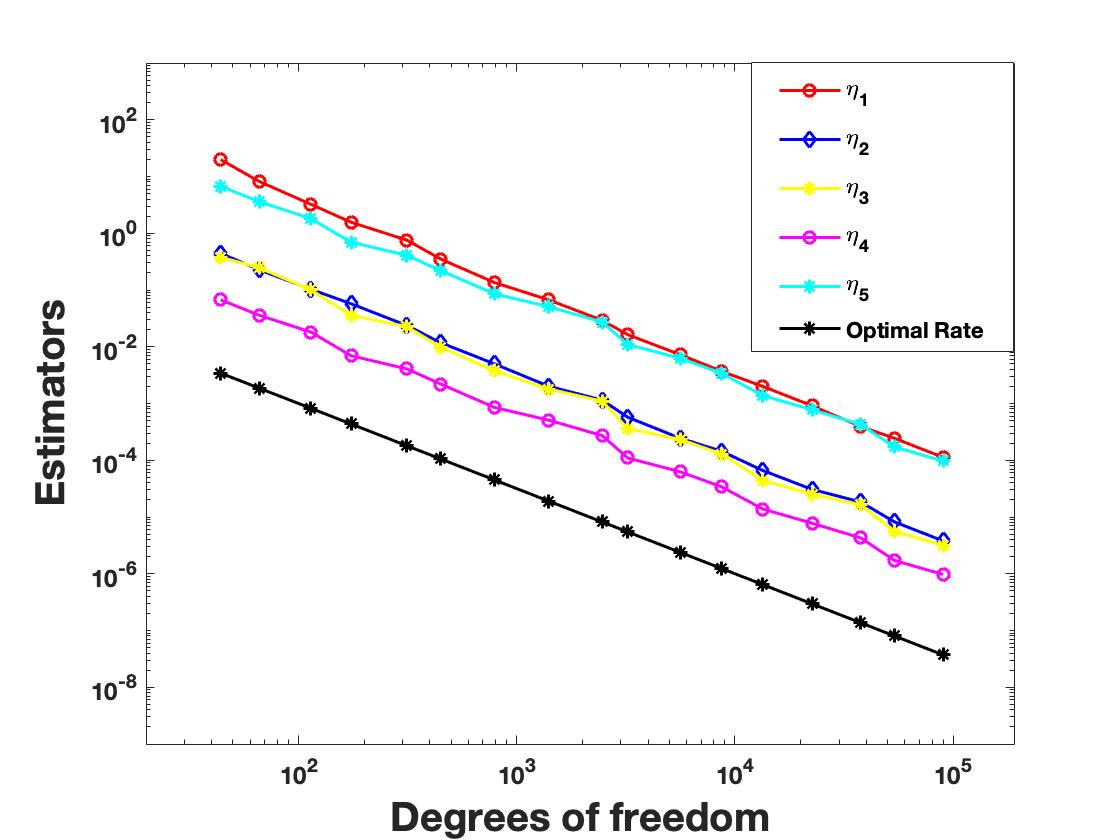}}
		\subfigure[Example 7.2 \label{5(b)}]{\includegraphics[width=8cm]{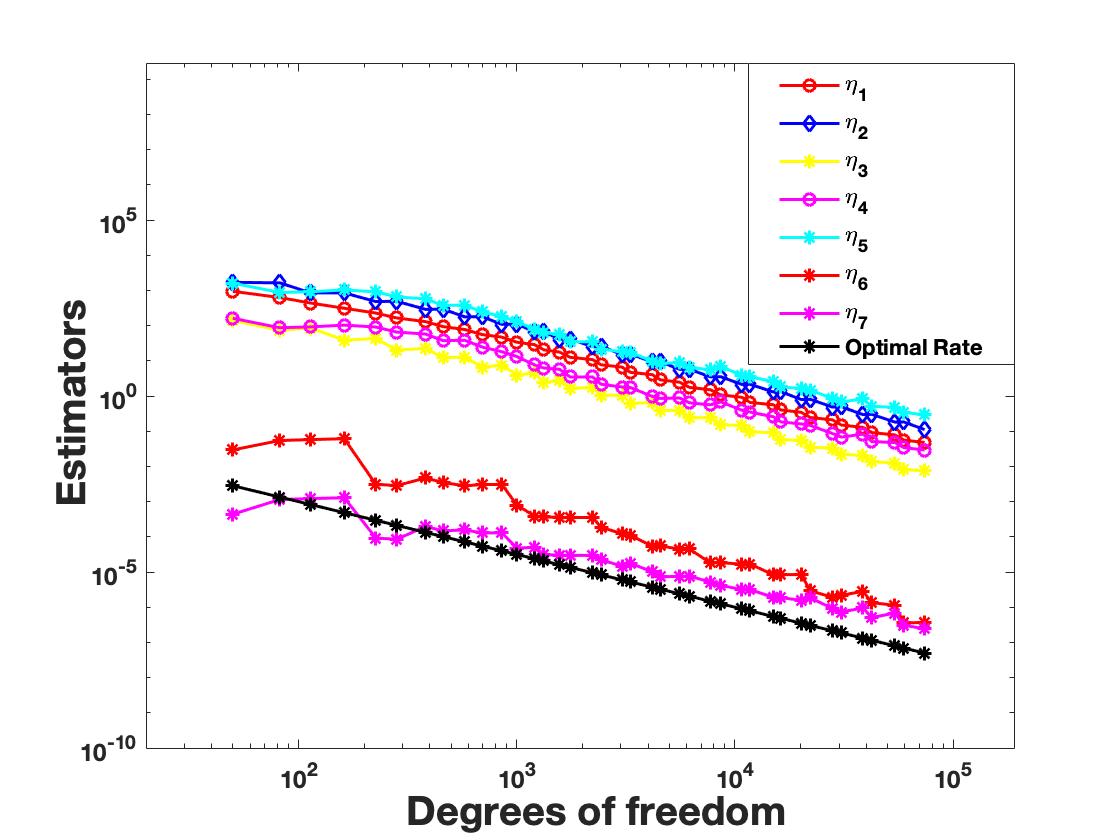}}
		\caption{Plot of individual estimators}
		\label{5}
	\end{figure}
\begin{example} \label{ex2}[(Contact with a rigid wedge \cite{walloth2019reliable})] \label{ex4}
		In this example, we consider the deformation of the $\Omega=(0,1) \times (0,1)$ which is pushed along the $x$ direction towards the non zero obstacle $\chi(y)=-0.2+0.5|y-0.5|$. Let us consider
		\begin{align*}
		\Gamma_C=  \{1\}\times (0,1),~ \Gamma_D= \{0\} \times (0,1)~\text{and}~ \Gamma_N= (0,1) \times \{1\} \cup (0,1) \times \{0\} .
		\end{align*} The Young's modulus and Poisson's ratio are assumed to be $E=500$ and $\nu=0.3$, respectively. Let $\bm{g}=\bm{f}= \bm{0}$ and the non homogeneous Dirichlet data is $\bm{u}=(0.1,0)$. The plot of the error estimator $\eta_h$ is shown in \Cref{4(a)} with logarithmic scales on both axes and the convergence behaviour of estimator  $\eta_i, 1\leq i \leq5$ together with $\eta_6=\|(u_{h,1}-\chi)^{+}\|_{L^{\infty}(\Gamma_C)}$ and $\eta_7=\|(\chi-u_{h,1})^+\|_{L^{\infty}(\Lambda_h^{C})}$ is illustrated in \Cref{5(b)}. We note that the full estimator $\eta_h$ and the individual estimators converge optimally. In \Cref{4(b)} the  adaptive mesh at level 20 is displayed and as expected, there is more refinement around the free boundary region and near the intersection corners of Dirichlet and Neumann boundaries.
	\end{example}
\bibliographystyle{unsrt}
\bibliography{Quad_V2}
\end{document}